\def\@maketitle{%
  \normalfont\normalsize
  \@adminfootnotes
  \@mkboth{\@nx\shortauthors}{\@nx\shorttitle}%
  \global\topskip42\p@\relax 
  \@settitle
  \ifx\@empty\authors \else \@setauthors \fi
  \ifx\@empty\@dedicatory
  \else
    \baselineskip22\p@
    \vtop{{\small\itshape\@dedicatory\@@par}%
      \global\dimen@i\prevdepth}\prevdepth\dimen@i
  \fi
  \@setabstract
  \normalsize
  \if@titlepage
    \newpage
  \else
    \dimen@25\p@ \advance\dimen@-\baselineskip
    \vskip\dimen@\relax
  \fi
} 
\def\@settitle{%
  \vspace*{-15pt}
  \begin{flushleft}%
    \LARGE\bfseries
    \strut\@title\strut
  \end{flushleft}%
}
\def\@setauthors{%
  \begingroup
  \def\thanks{\protect\thanks@warning}%
  \trivlist
  \raggedright
  \large \@topsep27\p@\relax
  \advance\@topsep by -\baselineskip
  \item\relax
  \author@andify\authors
  \def\\{\protect\linebreak}%
  \authors
  \ifx\@empty\contribs
  \else
    ,\penalty-3 \space \@setcontribs
    \@closetoccontribs
  \fi
  \normalfont
  \endtrivlist
  \endgroup
}
\def\@setaddresses{\par
  \nobreak \begingroup
  \small\raggedright
  \def\author##1{\nobreak\addvspace\smallskipamount}%
  \def\\{\unskip, \ignorespaces}%
  \interlinepenalty\@M
  \def\address##1##2{\begingroup
    \par\addvspace\bigskipamount\noindent
    \@ifnotempty{##1}{(\ignorespaces##1\unskip) }%
    {\ignorespaces##2}\par\endgroup}%
  \def\curraddr##1##2{\begingroup
    \@ifnotempty{##2}{\nobreak\noindent\curraddrname
      \@ifnotempty{##1}{, \ignorespaces##1\unskip}\/:\space
      ##2\par}\endgroup}%
  \def\email##1##2{\begingroup
    \@ifnotempty{##2}{\nobreak\noindent E-mail address%
      \@ifnotempty{##1}{, \ignorespaces##1\unskip}\/:\space
      \ttfamily##2\par}\endgroup}%
  \def\urladdr##1##2{\begingroup
    \def~{\char`\~}%
    \@ifnotempty{##2}{\nobreak\noindent\urladdrname
      \@ifnotempty{##1}{, \ignorespaces##1\unskip}\/:\space
      \ttfamily##2\par}\endgroup}%
  \addresses
  \endgroup
  \global\let\addresses=\@empty
}
\def\@setabstracta{%
    \ifvoid\abstractbox
  \else
    \skip@15pt \advance\skip@-\lastskip
    \advance\skip@-\baselineskip \vskip\skip@
    \box\abstractbox
    \prevdepth\z@ 
    \vskip-15pt
  \fi
}
\renewenvironment{abstract}{%
  \ifx\maketitle\relax
    \ClassWarning{\@classname}{Abstract should precede
      \protect\maketitle\space in AMS document classes; reported}%
  \fi
  \global\setbox\abstractbox=\vtop \bgroup
    \normalfont\small
    \list{}{\labelwidth\z@
      \leftmargin0pc \rightmargin\leftmargin
      \listparindent\normalparindent \itemindent\z@
      \parsep\z@ \@plus\p@
      
    }%
    \item[\hskip\labelsep\bfseries\abstractname.]%
}{%
  \endlist\egroup
  \ifx\@setabstract\relax \@setabstracta \fi
}
\def\ps@headings{\ps@empty
  \def\@evenhead{%
    \setTrue{runhead}%
    \normalfont\scriptsize
    \rlap{\thepage}\hfill
    \def\thanks{\protect\thanks@warning}%
    \leftmark{}{}}%
  \def\@oddhead{%
    \setTrue{runhead}%
    \normalfont\scriptsize
    \def\thanks{\protect\thanks@warning}%
    \rightmark{}{}\hfill \llap{\thepage}}%
  \let\@mkboth\markboth
}\ps@headings
\def\section{\@startsection{section}{1}%
  \z@{-1.4\linespacing\@plus-.5\linespacing}{.8\linespacing}%
  {\normalfont\bfseries\Large}}
\def\subsection{\@startsection{subsection}{2}%
  \z@{-.8\linespacing\@plus-.3\linespacing}{.5\linespacing\@plus.2\linespacing}%
  {\normalfont\bfseries\large}}
\def\subsubsection{\@startsection{subsubsection}{3}%
  \z@{.7\linespacing\@plus.2\linespacing}{-1.5ex}%
  {\normalfont\bfseries}}
\def\@secnumfont{\bfseries}
\renewcommand\contentsnamefont{\bfseries}
\def\@starttoc#1#2{\begingroup
  \setTrue{#1}%
  \par\removelastskip\vskip\z@skip
  \@startsection{}\@M\z@{\linespacing\@plus\linespacing}%
    {.5\linespacing}{
      \contentsnamefont}{#2}%
  \ifx\contentsname#2%
  \else \addcontentsline{toc}{section}{#2}\fi
  \makeatletter
  \@input{\jobname.#1}%
  \if@filesw
    \@xp\newwrite\csname tf@#1\endcsname
    \immediate\@xp\openout\csname tf@#1\endcsname \jobname.#1\relax
  \fi
  \global\@nobreakfalse \endgroup
  \addvspace{32\p@\@plus14\p@}%
  \let\tableofcontents\relax
}
\def\contentsname{Contents}
\def\l@section{\@tocline{2}{.5ex}{0mm}{5pc}{}}
\def\l@subsection{\@tocline{2}{0pt}{2em}{5pc}{}}
\def\to{\mathchoice{\longrightarrow}{\rightarrow}{\rightarrow}{\rightarrow}}
\newcommand{\shortxra}[2][]{\ext@arrow 0359\rightarrowfill@{#1}{#2}}
\def\longrightarrowfill@{\arrowfill@\relbar\relbar\longrightarrow}
\newcommand{\longxra}[2][]{\ext@arrow 0359\longrightarrowfill@{#1}{#2}}
\renewcommand{\xrightarrow}[2][]{\mathchoice{\longxra[#1]{#2}}%
  {\shortxra[#1]{#2}}{\shortxra[#1]{#2}}{\shortxra[#1]{#2}}}
\def\addtagsub#1{\let\oldtf=\tagform@\def\tagform@##1{\oldtf{##1}\hbox{$_{#1}$}}}
\def\otimesover#1{\mathbin{\mathop{\otimes}_{#1}}}
\def\Nopagebreak{\@nobreaktrue\nopagebreak}
\newtheoremstyle{theorem-giventitle}
        {}{}              
        {\itshape}                      
        {}                              
        {\bfseries}                     
        {.}                             
        {\thm@headsep}                             
        {\thmnote{\bfseries#3}}
\newtheoremstyle{theorem-givenlabel}
        {}{}              
        {\itshape}                      
        {}                              
        {\bfseries}                     
        {.}                             
        {\thm@headsep}                             
        {\thmname{#1}~\thmnumber{#3}\setcurrentlabel{#3}}
\newtheoremstyle{definition-giventitle}
        {}{}              
        {}                      
        {}                              
        {\bfseries}                     
        {.}                             
        {\thm@headsep}                             
        {\thmnote{\bfseries#3}}
\def\setcurrentlabel#1{\gdef\@currentlabel{#1}}
\newtheorem{theorem}{Theorem}[section]
\newtheorem{proposition}[theorem]{Proposition}
\newtheorem{corollary}[theorem]{Corollary}
\newtheorem{lemma}[theorem]{Lemma}
\theoremstyle{definition}
\newtheorem{definition}[theorem]{Definition}
\newtheorem{remark}[theorem]{Remark}
\theoremstyle{theorem-giventitle}
\newtheorem{theorem-named}{}
\theoremstyle{theorem-givenlabel}
\newtheorem{theorem-labeled}{Theorem}
\theoremstyle{definition-giventitle}
\newtheorem{definition-named}{}
\newtheorem{step-named}{}
\numberwithin{equation}{section}
\def\Z{\mathbb{Z}}
\def\Q{\mathbb{Q}}
\def\R{\mathbb{R}}
\def\C{\mathbb{C}}
\def\NN{\mathbb{N}}
\def\KK{\mathbb{K}}
\def\N{\mathcal{N}}
\def\F{\mathcal{F}}
\def\G{\mathcal{G}}
\def\K{\mathcal{K}}
\def\cC{\mathcal{C}}
\def\cG{\mathcal{G}}
\def\cR{\mathcal{R}}
\def\cP{\mathcal{P}}
\def\Ker{\operatorname{Ker}}
\def\Coker{\operatorname{Coker}}
\def\Im{\operatorname{Im}}
\def\Hom{\operatorname{Hom}}
\def\sign{\operatorname{sign}}
\def\rank{\operatorname{rank}}
\def\ldim{\dim^{(2)}}
\def\lsign{\sign^{(2)}}
\def\Lt{L^2}
\def\rhot{\rho^{(2)}}
\def\setminus{\smallsetminus}
\def\fpn{\mathcal{F}^{cot,p}_n}
\def\fpnh{\mathcal{F}^{cot,p}_{n.5}}
\begin{document}

\vspace*{0mm}

\title[Filtrations of the knot concordance group]{Amenable signatures, algebraic solutions, and filtrations of the knot concordance group}

\author{Taehee Kim}
\address{
  Department of Mathematics\\
  Konkuk University \\
  Seoul 05029\\
  Korea
}
\email {tkim@konkuk.ac.kr}
\thanks{This research was supported by Basic Science Research Program through the National Research Foundation of Korea(NRF) funded by the Ministry of Education (no.~2011-0030044(SRC-GAIA) and no.~2015R1D1A1A01056634).}

\dedicatory{Dedicated to the memory of Tim D. Cochran}

\def\subjclassname{\textup{2010} Mathematics Subject Classification}
\expandafter\let\csname subjclassname@1991\endcsname=\subjclassname
\expandafter\let\csname subjclassname@2000\endcsname=\subjclassname
\subjclass{57M25, 57N70
}
\keywords{Knot, Concordance, Grope, $n$-solution, Algebraic $n$-solution, Amenable signature}


\begin{abstract}
It is known that each of the successive quotient groups of the grope and solvable filtrations of the knot concordance group has an infinite rank subgroup. The generating knots of these subgroups are constructed using iterated doubling operators. In this paper, for each of the successive quotients of the filtrations we give a new infinite rank subgroup which trivially intersects the previously known infinite rank subgroups. Instead of iterated doubling operators, the generating knots of these new subgroups are constructed using the notion of algebraic $n$-solutions, which was introduced by Cochran and Teichner. Moreover, for each slice knot $K$ whose Alexander polynomial has degree greater than 2, we construct the generating knots such that they have the same derived quotients and higher-order Alexander invariants up to a certain order.

In the proof, we use an $L^2$-theoretic obstruction for a knot to being $n.5$-solvable given by Cha, which is based on $L^2$-theoretic techniques developed by Cha and Orr. We also generalize  the notion of algebraic $n$-solutions to the notion of $R$-algebraic $n$-solutions where $R$ is either rationals or the field of $p$ elements for a prime $p$.

\end{abstract}

\maketitle


\section{Introduction}

In this paper, we address the structure of the grope and solvable filtrations of the knot concordance group. Two oriented knots $K_0$ and $K_1$ in the 3-sphere $S^3$ are {\it concordant} if there is a smoothly and properly embedded annulus in $S^3\times [0,1]$ whose boundary is the union of $K_0\times\{0\}$ and $-K_1\times \{1\}$. It is known that $K_0$ and $K_1$ are concordant if and only if the connected sum $K_0\# (-K_1)$ bounds a smoothly embedded disk in the 4-ball $D^4$, namely, $K_0\# (-K_1)$ is a {\it slice knot}. 
Concordance classes form an abelian group under connected sum, which is called {\it the knot concordance group}, and we denote it by $\cC$.

The notion of concordance on knots was introduced by Fox and Milnor in the 1950's. In the 1960's Levine classified the algebraic concordance group \cite{Levine:1969-1,Levine:1969-2}, and in the 1970's Casson and Gordon showed that the surjection from $\cC$ to the algebraic concordance group is not injective \cite{Casson-Gordon:1986-1}.

In the late 1990's, Cochran, Orr, and Teichner \cite{Cochran-Orr-Teichner:1999-1} introduced the grope and solvable filtrations of the knot concordance group denoted by $\{\G_n\}$ and $\{\F_n\}$, respectively, which are indexed by nonnegative half-integers. The subgroup $\G_n$ consists of knots bounding a {\it grope} of height $n$ in $D^4$, where a grope is a certain 2-complex constructed by attaching surfaces along their boundaries. See Definition~\ref{def:grope} for a precise definition of a grope. Similarly, $\F_n$ is the subgroup of knots such that the zero-framed surgery on the knot bounds an {\it $n$-solution}, where an $n$-solution is a 4-manifold satisfying certain conditions on the (equivariant) intersection form with twisted coefficients (see Definition~\ref{def:n-solution}). For all $n$ it is known that $\G_{n+2}\subset \F_n$ \cite[Theorem~8.11]{Cochran-Orr-Teichner:1999-1}, and one may consider an $n$-solution as an order $n$ approximation of the exterior of a slice disk in $D^4$.  These filtrations reflect classical invariants at low levels. For instance, a knot $K$ has vanishing Arf invariant if and only if $K\in \F_0$, and $K$ is algebraically slice if and only if $K\in \F_{0.5}$ \cite{Cochran-Orr-Teichner:1999-1}. Furthermore, a knot in $\F_{1.5}$ has vanishing Casson--Gordon invariants \cite{Cochran-Orr-Teichner:1999-1}, but it is known that there exists a knot with vanishing Cassson--Gordon invariants which is not in $\F_{1.5}$ \cite{Kim:2004-1}. 

In this paper, for each of the successive quotients of the grope and solvable filtrations we give a new infinite rank subgroup which trivially intersects the previously known infinite rank subgroups.

We recall results on finding infinite rank subgroups of the successive quotients $\G_{n+2}/\G_{n+2.5}$ and $\F_n/\F_{n.5}$ for $n\ge 2$. We are interested in the cases for $n\ge 2$ since the cases for $n\le 2$ were well-known from classical invariants. For finding infinite rank subgroups of the quotients, there are three different approaches using the following: 1) rationally universal solvable representations; 2) algebraic $n$-solutions; 3) iterated doubling operators. In this paper, by an iterated doubling operator we mean any of iterated (generalized) doubling operators used in \cite{Cochran-Harvey-Leidy:2009-1, Cochran-Harvey-Leidy:2009-2,Cha:2010-1,Horn:2010-1}. For a more precise definition of an iterated doubling operator, see the proof of Theorem~\ref{thm:refined_main_theorem-2}.

First, in their seminal papers \cite{Cochran-Orr-Teichner:1999-1,Cochran-Orr-Teichner:2002-1}, using the von Neumann--Cheeger--Gromov $\rhot$-invariants associated to {\it rationally universal solvable representations}, Cochran, Orr, and Teichner showed that $\F_2/\F_{2.5}$ has an infinite rank subgroup, giving the first example of nonslice knots with vanishing Casson--Gordon invariants. For each integer $n\ge 2$, Cochran and Teichner showed that $\F_n/\F_{n.5}$ and $\G_{n+2}/\G_{n+2.5}$ are infinite and have positive rank using Cheeger--Gromov's universal bound on $\rhot$-invariants and their new notion of an {\it algebraic $n$-solution} \cite{Cochran-Teichner:2003-1}. Later Cochran--Teichner's work was refined further by Cochran and the author \cite{Cochran-Kim:2004-1}. 
Also, in \cite{Cochran-Kim:2004-1} the notion of an algebraic $n$-solution was generalized.

Instead of algebraic $n$-solutions, using {\it iterated doubling operators} and unlocalized higher-order Blanchfield linking forms Cochran, Harvey, and Leidy obtained the first example of an infinite rank subgroup of $\F_n/\F_{n.5}$ for each integer $n\ge 2$ \cite{Cochran-Harvey-Leidy:2009-1}. They extended their work further and showed that the solvable filtration $\{\F_n\}$ has refined filtrations related to primary decomposition whose successive quotient groups have infinite rank \cite{Cochran-Harvey-Leidy:2009-2}.   For the grope filtration, Horn \cite{Horn:2010-1} proved that $\G_{n+2}/\G_{n+2.5}$ has an infinite rank subgroup for each integer $n\ge 2$, where the generating knots of the subgroups were constructed using iterated doubling operators. 

In all of the above work, knots were obstructed to being in $\F_{n.5}$ (hence not in $\G_{n+2.5}$) using the $\rhot$-invariants associated to a representation mapping to a poly-torsion-free-abelian (henceforth PTFA) group. By a PTFA group, we mean a group which allows a subnormal series whose successive quotient groups are torsion-free and abelian. These obstructions are essentially based on the vanishing criterion of the $\rhot$-invariants associated to a PTFA representation  given by Cochran--Orr--Teichner \cite[Theorem 4.2]{Cochran-Orr-Teichner:1999-1}. For the reader's convenience, we review the $\rhot$-invariant in Subsection~\ref{subsec:amenable signature}, where the $\rhot$-invariant is defined to be an $L^2$-signature defect. 

In \cite{Cha-Orr:2009-1}, Cha and Orr developed $L^2$-theoretic methods: for a representation mapping to a group which is amenable and lies in Strebel's class $D(R)$ for some commutative ring $R$, they proved the homology cobordism invariance of the $L^2$-Betti numbers and the $\rhot$-invariants, and presented a method for controlling the $L^2$-dimension of homology with $L^2$-coefficients. (The reader may refer to \cite{Cha-Orr:2009-1} for definitions of amenable groups and Strebel's class $D(R)$, but the definitions will not be needed in this paper.) 

Based on the work in \cite{Cha-Orr:2009-1}, Cha found a vanishing criterion of $\rhot$-invariants for $n.5$-solvable knots in \cite[Theorem~4.2]{Cochran-Orr-Teichner:1999-1} to include the vanishing of the $\rhot$-invariants associated to a representation mapping to a group which is amenable and lies in Strebel's class $D(R)$ where $R=\Q$ or $\Z_p$ for a prime $p$ \cite[Therorem 1.3]{Cha:2010-1}. In this paper,  we call this extended vanishing criterion {\it Amenable Signature Theorem (for $n.5$-solvability)} (see Theorem~\ref{thm:obstruction}). We note that  the above class of groups includes PTFA groups and some groups with torsion elements (see Lemma~\ref{lem:amenable and D(R)}). Using Amenable Signature Theorem~\ref{thm:obstruction}, Cha constructed an infinite rank subgroup of $\F_n/\F_{n.5}$ for each integer $n\ge 2$ for which the $\rhot$-invariants associated to a PTFA representation vanish \cite[Theorem 1.4]{Cha:2010-1}. The generating knots of the infinite rank subgroups in \cite{Cha:2010-1} were constructed also using iterated doubling operators. 

In this paper, we combine algebraic $n$-solutions and Amenable Signature Theorem to find infinite rank subgroups of the grope and solvable filtrations, and give Theorem~\ref{thm:main} below. In the following, $\Delta_K(t)$ denotes the Alexander polynomial of a knot $K$. For a group $G$, let $G^{(0)}:=G$, and inductively for $n\ge 0$ define $G^{(n+1)}:=[G^{(n)},G^{(n)}]$. The group $G^{(n)}$ is called the {\it $n$-th derived group} of $G$. Also recall that $\cG_{n+2}\subset \F_n$ for all~$n$. 

\begin{theorem}\label{thm:main} Let $n\ge 2$ be an integer. Let $K$ be a slice knot with $\deg \Delta_K(t) > 2$. Then there is a sequence of knots $K_1,K_2,\ldots$  satisfying the following.
\begin{enumerate}
	\item For each $i$, there is an isomorphism
	\[
	\pi_1(S^3\setminus K_i)/\pi_1(S^3\setminus K_i)^{(n+1)} \to \pi_1(S^3\setminus K)/\pi_1(S^3\setminus K)^{(n+1)}
	\]
	which preserves the peripheral structures.
	\item The $K_i$ are in $\cG_{n+2}$ and they are linearly independent modulo $\F_{n.5}$. In particular, the knots $K_i$ generate an infinite rank subgroup in each of $\cG_{n+2}/\cG_{n+2.5}$ and $\F_n/\F_{n.5}$.
	\item In $\cG_{n+2}/\cG_{n+2.5}$, the infinite rank subgroup generated by the $K_i$ trivially intersects the infinite rank subgroup in \cite{Horn:2010-1}.
	\item In $\F_n/\F_{n.5}$, the infinite rank subgroup generated by the $K_i$ trivially intersects the infinite rank subgroups in \cite{Cochran-Harvey-Leidy:2009-1, Cochran-Harvey-Leidy:2009-2,Cha:2010-1}.
\end{enumerate}

\end{theorem}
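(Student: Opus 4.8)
The plan is to construct the knots $K_i$ by a satellite (infection) construction applied to the fixed slice knot $K$, using a family of auxiliary knots whose Cheeger--Gromov $\rhot$-invariants are suitably controlled, and to separate the resulting subgroup from the previously known ones via the algebraic $n$-solution technology of Cochran--Teichner (in the generalized $R$-algebraic form promised in the abstract). Concretely, I would start from the zero-framed surgery $M_K$ and choose a curve $\eta$ lying deep in the derived series of $\pi_1(M_K)$ — specifically $\eta \in \pi_1(M_K)^{(n)}$ but $\eta \notin \pi_1(M_K)^{(n+1)}$, and unknotted in $S^3$ — so that infecting $K$ along $\eta$ by an auxiliary knot $J$ does not change $\pi_1(S^3\setminus K)/\pi_1(S^3\setminus K)^{(n+1)}$ nor the peripheral structure; this yields part (1) immediately, and the fact that $\eta$ bounds a grope/capped surface pushed into the right height gives $K_i \in \cG_{n+2}$, yielding the first half of part (2). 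For the auxiliary knots I would take $J = J(i)$ to be connected sums (or iterated infections) of knots with large, rationally independent $\rhot^{(2)}$-values — e.g. built from torus knots or from the explicit families in \cite{Cha:2010-1} — chosen so that the relevant signature integral over the circle is large compared to the Cheeger--Gromov universal bound for the $(n{+}2)$-dimensional pieces.

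The heart of the argument is linear independence modulo $\F_{n.5}$. Here I would argue by contradiction: suppose a nontrivial linear combination $K' = \#_j a_j K_{i_j}$ were $n.5$-solvable. Then I would apply the Amenable Signature Theorem~\ref{thm:obstruction} to $M_{K'}$ together with a representation of $\pi_1(M_{K'})$ into an amenable group in Strebel's class $D(R)$ (with $R = \Q$ or $\Z_p$) obtained by composing the projection to the $(n{+}1)$-st derived quotient with a map that records the infection curves; the point of passing to amenable $D(R)$-groups rather than PTFA groups is exactly to be able to use coefficient systems coming from $R$-algebraic $n$-solutions where torsion in the relevant modules would otherwise be an obstruction. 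The algebraic $n$-solution hypothesis — which is where $\deg\Delta_K(t) > 2$ enters, guaranteeing enough ``room'' in the first-order Alexander module to build a nontrivial lift through $n$ stages — ensures that the relevant $\rhot$-invariant of $M_{K'}$ is, up to a universally bounded error, a nonzero $\Z$-linear combination $\sum_j a_j \rhot^{(2)}(J(i_j))$ of the auxiliary-knot invariants; choosing the $J(i)$ so these grow faster than any fixed bound forces a contradiction. This simultaneously gives the infinite rank statements in part (2).

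For parts (3) and (4), the strategy is to exploit the structural difference between knots built from \emph{algebraic} $n$-solutions and knots built from \emph{iterated doubling operators}. A knot lying in the infinite rank subgroups of \cite{Horn:2010-1, Cochran-Harvey-Leidy:2009-1, Cochran-Harvey-Leidy:2009-2, Cha:2010-1} has higher-order Alexander modules / Blanchfield forms that, after the doubling construction, are built from the Alexander module of the unknot (i.e. the relevant infection curve lies in $\pi_1$ of an \emph{unknotted} pattern), whereas our $K_i$ inherit from $K$ a first-order Alexander module with $\deg\Delta_K > 2$ that survives into the algebraic $n$-solution. I would make this precise by showing that if a combination of our $K_i$ were concordant (modulo the appropriate filtration term) to a combination of the doubling-operator knots, then on one side the obstruction-carrying coefficient system would have to factor through a module supported on $\Delta_K$ and on the other side through a module supported on the doubling data, and these are arranged (by choosing the localizing primes / the polynomials defining the doubling operators coprime to $\Delta_K$) to be incompatible — forcing the common class to be $n.5$-solvable, hence zero in the quotient by the already-proven part (2). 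The necessary bookkeeping of higher-order invariants under infection is standard (Cochran--Harvey--Leidy), so the non-routine content is precisely in setting up the $R$-algebraic $n$-solution and verifying its derived-series lift exists for \emph{every} $K$ with $\deg\Delta_K > 2$.

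The main obstacle I anticipate is the construction of the $R$-algebraic $n$-solution for an arbitrary slice knot $K$ with $\deg\Delta_K(t) > 2$ and the verification that the Amenable Signature Theorem can be applied with a coefficient system that is simultaneously (a) deep enough in the derived series to detect the $n.5$-solvable obstruction, (b) valued in a group that is amenable and in $D(R)$ so that Theorem~\ref{thm:obstruction} applies, and (c) compatible with the fixed quotient $\pi_1(S^3\setminus K)/\pi_1(S^3\setminus K)^{(n+1)}$ demanded by part (1); reconciling (a)--(c) — essentially the generalization of the Cochran--Teichner/Cochran--Kim machinery from PTFA to $R$-algebraic $n$-solutions — is the technical core, and the rest of the proof is assembling standard infection and $\rhot$-additivity estimates around it.
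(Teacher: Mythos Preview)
Your overall architecture is right, but there is a genuine gap in the construction step that the paper's machinery is specifically designed to overcome. You propose to infect along a \emph{single} curve $\eta$ with $\eta\in\pi_1(M_K)^{(n)}\setminus\pi_1(M_K)^{(n+1)}$. That condition is about $\pi_1(M_K)$, but what the Amenable Signature argument actually needs is that the image of $\eta$ is nontrivial in $\pi_1 W/\cP^{n+1}\pi_1 W$, where $W$ is a $4$-manifold built from an \emph{arbitrary} hypothetical $n.5$-solution $V$ together with various cobordism pieces. There is no a~priori reason a single curve that survives in $\pi_1(M_K)/\pi_1(M_K)^{(n+1)}$ survives under $j_*\colon \pi_1 M_K\to \pi_1 W$ modulo $\cP^{n+1}\pi_1 W$ for every such $W$. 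This is precisely why the paper (Theorem~\ref{thm:eta_i}) produces a \emph{finite family} $\{\eta_1,\dots,\eta_m\}$ and proves, via the $R$-algebraic $n$-solution machinery and Theorem~\ref{thm:nontrivial}, that for \emph{every} $n$-cylinder $W$ at least one $\eta_i$ survives. The knots $K_i$ are then infected along all of $\eta_1,\dots,\eta_m$ simultaneously by the same auxiliary knot $J_i$. Without this, the step where you say ``the algebraic $n$-solution hypothesis ensures the $\rhot$-invariant is a nonzero combination'' does not go through: you have no control over whether your single $\eta$ dies in $W$.

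A second, smaller point concerns linear independence. Merely choosing the $J(i)$ with large or ``rationally independent'' $\rhot$-values is not what is used; the paper selects (Lemma~\ref{lemma:J_0^i}) an increasing sequence of primes $p_1<p_2<\cdots$ and knots $J_i$ so that $\sum_r\sigma_{J_i}(e^{2\pi i r/p_j})=0$ for $j<i$. Given a combination $\#_i a_iK_i$ with $a_1\ne 0$, one takes the mixed-coefficient series $\cP=(\Q,\dots,\Q,\Z_{p_1})$; the choice of $p_1$ kills all contributions from $J_i$ with $i\ge 2$, leaving only the $J_1$ term with a definite sign exceeding the Cheeger--Gromov bound $C_{M(K)}$.

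For parts (3)--(4), your mechanism (coprimality of $\Delta_K$ with the doubling polynomials) is not the paper's, and your description of the doubling-operator knots is off: the patterns are slice knots, not unknotted. The paper instead introduces a notion of $(n.5,p)$-solvability via subgroups $\pi^{(n+1)}_{cot,p}$ built from a PID localization $\KK[t^{\pm 1}]$ of $\Z_p[\pi/\pi^{(n)}_r]$. One shows: (i) a nontrivial combination of the $K_i$ is not $(n.5,p_1)$-solvable, by rerunning the part (2) argument with $\pi_1W^{(n+1)}_{cot,p_1}$ in place of $\cP^{n+1}\pi_1W$; and (ii) any combination of the iterated-doubling knots \emph{is} $(n.5,p)$-solvable for all $p$, because the meridian of the innermost infection knot is annihilated by the Alexander polynomial of the level-$1$ pattern and hence is $\KK[t^{\pm 1}]$-torsion. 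This torsion argument, not polynomial coprimality, is what separates the two families.
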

In Theorem~\ref{thm:main}, the condition $\deg \Delta_K(t)>2$ is best possible; by the work of Friedl and Teichner \cite{Friedl-Teichner:2005-1}; Theorem~\ref{thm:main} does not hold for any $n\ge 2$ for a certain slice knot $K$ with $\deg \Delta_K(t)=2$ (also refer to \cite[Proposition~5.10]{Cochran-Kim:2004-1}). Theorem~\ref{thm:main} is an immediate consequence of Theorems~\ref{thm:refined main theorem-1} and \ref{thm:refined_main_theorem-2}. 
As in Theorems~\ref{thm:main}(3) and (4), the infinite rank subgroups generated by the $K_i$ trivially intersect the previously known infinite rank subgroups, which were constructed using iterated doubling operators. 

An advantage of constructing the $K_i$ using not iterated doubling operators but algebraic $n$-solutions is that Theorem~\ref{thm:main}(1) is obtained for \emph{any} slice knot with $\deg \Delta_K(t)> 2$. Theorem~\ref{thm:main}(1) is related to studying the structure on knot concordance under a fixed Seifert form. It is well-known by the work of Freedman \cite{Freedman:1982-1,Freedman-Quinn:1990-1} that a knot with trivial Alexander polynomial is topologically slice, and hence determines a unique topological concordance class, namely, the class of topologically slice knots. It was asked if there is any other Alexander polynomial or a Seifert form which determines a unique topological concordance class. This question was answered negative that for each Seifert form of a knot $K$ with nontrivial Alexander polynomial, there exists infinitely many mutually topologically nonconcordant knots $K_i$ having the Seifert form \cite{Livingston:2002-1,Kim:2005-1}. (Also refer to \cite{Kim:2017-1}.) This result was refined further that in \cite[Theorem~5.1]{Cochran-Kim:2004-1} it was shown under the condition $\deg \Delta_K(t)>2$ that for each $n\ge 2$ those mutually nonconcordant knots $K_i$ can be constructed using algebraic $n$-solutions such that the $K_i$ are mutually distinct in $\F_n/\F_{n.5}$ and satisfies the property in Theorem~\ref{thm:main}(1). Furthermore, in \cite{Cochran-Kim:2004-1} it was shown that the $K_i$ have the same \emph{$m$th order Seifert presentation} (see \cite[Definitions~5.5 and 5.9]{Cochran-Kim:2004-1}) as $K$ for $m=0,1,\ldots, n-1$, and hence have the same Seifert form. In this paper, in Theorem~\ref{thm:main}, we refine it further and construct the $K_i$ such that they are linearly independent in $\F_n/\F_{n.5}$, still satisfying the condition in Theorem~\ref{thm:main}(1). We also note that the $K_i$ in Theorem~\ref{thm:main} also have the same $m$th order Seifert presentation for $m=0,1,\ldots, n-1$. (This can be easily shown using the same arguments in the proof of \cite[Theorem~5.1]{Cochran-Kim:2004-1} and will not be discussed in this paper.)

We also refer the reader to \cite{Kim:2006-1} and \cite{Cha-Kim:2008-1} for more applications of algebraic $n$-solutions to doubly slice knots and the solvable filtration of the \emph{rational} knot concordance group.

We construct the $K_i$ in Theorem~\ref{thm:main} using a well-known process called {\it infection} or {\it satellite construction} which involve a {\it seed knot}, {\it axes (knots)}, and {\it infection knots} (see Section~\ref{sec:construction}). Then, we show that the $K_i$ are linearly independent modulo $\F_{n.5}$ using Amenable Signature Theorem~\ref{thm:obstruction} following the ideas in \cite{Cha:2010-1}. We give more details: let $J$ be a nontrivial linear combination of the $K_i$. For an $n$-solution $V$ for~$J$, as obstructions for $V$ to being an $n.5$-solution, we use the $\rhot$-invariants associated to a representation factoring through $\pi_1W\to \pi_1W/\cP^{n+1}\pi_1W$ where $W$ is a certain 4-manifold containing $V$. Here, $\cP^{n+1}\pi_1W$ is a subgroup of $\pi_1W$ obtained from a certain {\it mixed-coefficient commutator series} of $\pi_1W$ which is defined depending on the choice of a prime $p$. See Definition~\ref{def:commutator series} for a precise definition of $\cP^{n+1}\pi_1W$. We remark that as can be seen in the proof of Theorem~\ref{thm:refined main theorem-1} the choice of a prime $p$ is specific to the linear combination $J$. Namely, we can choose a representation specific to $J$ and use the corresponding $\rhot$-invariant, and this makes easier to show that $J$ is not trivial modulo $\F_{n.5}$. In the representation, the quotient group $\pi_1W/\cP^{n+1}\pi_1W$ has torsion elements, hence is not a PTFA group. But it is amenable and lies in Strebel's class $D(\Z_p)$, and this fact enables us to use Amenable Signature Theorem~\ref{thm:obstruction} and obtain desired computations of $\rhot$-invariants.

When we construct the $K_i$ using infection, we need to choose the axes in a subtle way: for a given slice knot $K$ with $\deg \Delta_K(t) >2$, we use $K$ as a seed knot and choose axes $\eta_i$, $1\le i \le m$, such that for each homomorphism on $\pi_1$ induced from the inclusion $M(K)\to W$ where $M(K)$ is the 0-surgery on $K$ and $W$ is an $n$-solution for $K$, the axes $\eta_i$ should satisfy a certain nontriviality property under the homomorphism. The desired nontriviality property and existence of such axes are given in Theorem~\ref{thm:eta_i}, which is a key technical theorem in this paper. Roughly speaking, the nontriviality  property requires that for a given $n$-solution $W$ and $\cP^{n+1}\pi_1W $, there exists some $\eta_i$ such that $\eta_i\notin \cP^{n+1}\pi_1W $. To prove Theorem~\ref{thm:eta_i} we generalize the notion of an algebraic $n$-solution in \cite{Cochran-Teichner:2003-1,Cochran-Kim:2004-1} and define the notion of an {\it $R$-algebraic $n$-solution} where $R=\Q$ or $\Z_p$ for a prime $p$ (see Definition~\ref{def:algebraic n-solution}). We also use a nontriviality theorem on homology with twisted coefficients which is obtained using higher-order Blanchfield linking forms (see Theorem~\ref{thm:nontrivial} and the proof of Theorem~\ref{thm:eta_i}).

We prove Theorems~\ref{thm:main}(3) and (4) in Theorem~\ref{thm:refined_main_theorem-2} following the ideas in \cite[Section 9]{Cochran-Harvey-Leidy:2009-2}. For each positive half-integer $n$, a prime $p$, and a group $G$, we define a subgroup $G^{(n)}_{cot,p}$ of $G$ using a localization of a group ring (see Subsection~\ref{subsec:distinction}). Then, using it we define the notion of \emph{$(n,p)$-solvable knots} (Definition~\ref{def:p-F_n}). We note that $G^{(n)}\subset G^{(n)}_{cot,p}$ for all prime $p$, and an $n$-solvable knot is $(n,p)$-solvable for all prime $p$. Finally, we show that a nontrivial linear combination of the $K_i$ in Theorem~\ref{thm:main} is not $(n.5,p)$-solvable for some prime $p$, but a knot concordant to a linear combination of the knots in \cite{Horn:2010-1,Cochran-Harvey-Leidy:2009-1, Cochran-Harvey-Leidy:2009-2,Cha:2010-1} is $(n.5,p)$-solvable for all prime $p$. 

Other than finding infinite rank subgroups of $\F_n/\F_{n.5}$ and $\G_{n+2}/\G_{n+2.5}$, there are many interesting results on the grope and solvable filtrations \cite{Kim-Kim:2008-1,Cochran-Harvey-Leidy:2009-2,Cochran-Harvey-Leidy:2009-3,Burke:2014-1,Davis:2014-1,Kim-Kim:2014-1,Jang:2015-1}. For instance, it is known that for each integer $n\ge 2$ and for each of $\F_n/\F_{n.5}$ and $\cG_{n+2}/\cG_{n+2.5}$ there exists a subgroup infinitely generated by knots of order 2 \cite{Cochran-Harvey-Leidy:2009-3,Jang:2015-1}. These knots are constructed using iterated doubling operators, and it is unknown whether or not one can construct a subgroup infinitely generated by knots of order 2 using algebraic $n$-solutions.

We can also define the grope and solvable filtrations $\{\G_n^{top}\}$ and $\{F_n^{top}\}$ of the topological knot concordance group \cite{Cochran-Orr-Teichner:1999-1}. We note that all the results in this paper also hold under this topological setting; in this paper all the examples of manifolds and gropes are constructed smoothly, and the obstructions obtained from Amenable Signature Theorem~\ref{thm:obstruction} are topological obstructions. Moreover, by the work of Freedman and Quinn \cite{Freedman-Quinn:1990-1}, it is known that in fact the smooth and topological filtrations are equivalent, that is,  $K\in \G_n$ (resp. $K\in \F_n$) if and only $K\in \G_n^{top}$ (resp. $K\in \F_n^{top}$), see \cite[Remark~2.19]{Cha:2012-1} and \cite[p.454]{Cochran-Harvey-Leidy:2009-2}.

This paper is organized as follows. In Section~\ref{sec:preliminaries}, we introduce the grope and solvable filtrations of the knot concordance group and give Amenable Signature Theorem~\ref{thm:obstruction}. 
In Section~\ref{sec:construction}, we discuss the construction of examples using infection (or satellite construction). We prove Theorem~\ref{thm:main} in Section~\ref{sec:infinite rank subgroup}, and discuss higher-order Blanchfield linking forms and the notion of $R$-algebraic $n$-solutions in Section~\ref{sec:Blanchfield linking form and algebraic n-solutions}.

In this paper, manifolds are assumed to be smooth, compact, and oriented, and $\Z_p$ denotes the field of $p$ elements for a prime $p$. By abuse of notation we use the same symbol for a knot and its homotopy class and homology classes. Homology groups come with integer coefficients unless specified otherwise. For a knot 
$K$, we denote by $E(K)$ and $M(K)$ the exterior of $K$ in $S^3$ and the 0-surgery on $K$ in $S^3$, respectively.


\section{Preliminaries}\label{sec:preliminaries}
In this section, we review the grope and solvable filtrations of the knot concordance group, and recall necessary results on amenable signatures and mixed-coefficient commutator series of a group in \cite{Cha:2010-1}. 

\subsection{The grope and solvable filtrations}\label{subsec:filtration}
In this subsection we review the notions of a grope, an $n$-solution, an $n$-cylinder and the grope and solvable filtrations of the knot concordance group.

\begin{definition}\cite{Freedman-Teichner:1995-1}\cite[Definition 7.9]{Cochran-Orr-Teichner:1999-1}\label{def:grope}
	A {\it grope of height 1} is a compact connected surface with a single boundary component. This boundary component is called {\it the base circle}. Let $\Sigma$ be a grope of height 1 of genus $g$. Let $\{\alpha_i, \beta_i\}_{1\le i\le g}$ be a standard symplectic basis of circles on $\Sigma$ such that $\alpha_i$ and $\beta_i$ are dual to each other. For an integer $n\ge 1$, a {\it grope of height $n+1$} is a 2-complex obtained by attaching gropes of height $n$ to each $\alpha_i$ and $\beta_i$ along the base circles. A {\it grope of height $n.5$} is a 2-complex obtained by attaching gropes of height $n$ to each $\alpha_i$ and gropes of height $n-1$ to each $\beta_i$ along the base circles. Here, the surface $\Sigma$ is called {\it the bottom stage} of the grope, and a grope of height 0 is understood to be the empty set. For a grope embedded in a 4-manifold, we require the grope has a neighborhood which is diffeomorphic to the product of $\R$ and the standard neighborhood of the (abstract) grope in $\R^3$. In this paper, a grope in a 4-manifold means a grope smoothly embedded in the 4-manifold in this way. We also note that in the literature a grope defined in this way is called a {\it symmetric grope}.
\end{definition}

Let $\NN_0 = \NN\cup \{0\}$. For each $n\in \frac12 \NN_0$, we denote by $\cG_n$ the subset of $\cC$ which consists of knots bounding a grope of height $n$ in $D^4$. It is known that each $\G_n$ is a subgroup of $\cC$ and $\G_m\subset \G_n$ for $m\ge n$, and hence $\{\G_n\}$ is a filtration of $\cC$. We call it the {\it grope filtration} of $\cC$.

For a group $G$, recall that $G^{(n)}$ denotes the $n$-th derived group of $G$. 
Now for a 4-manifold $W$, let $\pi:=\pi_1W$ and let $R:=\Z$, $\Q$, or $\Z_p$. Then for each $n\ge 0$,  there is the equivariant intersection form 
\[
\lambda_n^R\colon H_2(W;R[\pi/\pi^{(n)}]) \times H_2(W;R[\pi/\pi^{(n)}]) \to R[\pi/\pi^{(n)}] .
\] 
We drop the decoration $R$ from $\lambda_n^R$ when it is understood from the context. Below, we generalize the notions of an $n$-cylinder and a rational $n$-cylinder in \cite{Cochran-Kim:2004-1}.

\begin{definition}(\cite[Section 2]{Cochran-Kim:2004-1} for $R=\Z$ and $\Q$)
\label{def:n-cylinder}
	Let $R=\Z$, $\Q$, or $\Z_p$. Let $n$ be a nonnegative integer.  Let $W$ be a compact connected 4-manifold with $\partial W=\coprod_{i=1}^\ell M_i$ where each $M_i$ is a connected component with $H_1(M_i)\cong R$. Let $\pi:=\pi_1W$ and $r=\frac12 \rank_R\Coker\{H_2(\partial W;R)\to H_2(W;R)\}$.
	\begin{enumerate}
		\item $W$ is an {\it $R$-coefficient $n$-cylinder} if each inclusion from $M_i$ to $W$ induces an isomorphism on $H_1(M_i;R)$ and there exist $x_1, x_2, \ldots, x_r$ and $y_1, y_2, \ldots, y_r$ in $H_2(W;R[\pi/\pi^{(n)}])$ such that $\lambda_n(x_i,x_j)=0$ and $\lambda_n(x_i,y_j)=\delta_{ij}$ for $1\le i,j\le r$.  
		\item $W$ is an {\it $R$-coefficient $n.5$-cylinder} if $W$ satisfies (1), and furthermore there exist lifts $\tilde{x}_1,\tilde{x}_2,\ldots, \tilde{x}_r$ of $x_1,x_2,\ldots, x_r$ in $H_2(W;R[\pi/\pi^{(n+1)}])$ such that $\lambda_{n+1}(\tilde{x}_i, \tilde{x}_j)=0$ for $1\le i,j\le r$.
	\end{enumerate}
	We also require $W$ to be spin when $R=\Z$. A $\Z$-coefficient $n$-cylinder is also called an {\it $n$-cylinder}. The submodule generated by $x_1, x_2,\ldots, x_r$ (resp. $\tilde{x}_1,\tilde{x}_2,\ldots, \tilde{x}_r$) is called an {\it $n$-Lagrangian} (resp. {\it $(n+1)$-Lagrangian}), and the submodule generated by $y_1,y_2, \ldots, y_r$ is called its {\it $n$-dual}. 
\end{definition}

It is obvious that an $n$-cylinder is a $\Q$-coefficient $n$-cylinder. We also have the following proposition.

\begin{proposition}\label{prop:implication of cylinder}
	An $n$-cylinder is a $\Z_p$-coefficient $n$-cylinder. 
\end{proposition}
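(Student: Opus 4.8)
The plan is to reduce everything to the universal-coefficient and change-of-rings behavior of the equivariant intersection form under the ring map $\Z \to \Z_p$, and to track how an $n$-Lagrangian and its $n$-dual are transported along this map. Let $W$ be an $n$-cylinder with $\partial W = \coprod_{i=1}^\ell M_i$, $\pi := \pi_1 W$, and let $x_1,\dots,x_r,y_1,\dots,y_r \in H_2(W;\Z[\pi/\pi^{(n)}])$ be the classes furnished by Definition~\ref{def:n-cylinder}(1), so that $\lambda_n^{\Z}(x_i,x_j)=0$ and $\lambda_n^{\Z}(x_i,y_j)=\delta_{ij}$. First I would check the two conditions that do not involve the intersection form: that each inclusion $M_i \hookrightarrow W$ induces an isomorphism on $H_1(M_i;\Z_p)$, and that $W$ is spin (the latter is inherited from the $\Z$-coefficient hypothesis, since being spin is part of the definition of a $\Z$-coefficient $n$-cylinder and is a condition on $W$ alone). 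For the $H_1$ statement, since $H_1(M_i)\cong\Z$ and $M_i\hookrightarrow W$ is a $\Z$-homology $H_1$-isomorphism, one gets $H_1(W)\cong\Z$ as well with the map an isomorphism; applying $-\otimes_\Z\Z_p$ and the universal coefficient theorem (the relevant $\mathrm{Tor}$ terms involve $H_0$, which is $\Z$ on both sides and maps isomorphically, so the $\mathrm{Tor}$ contributions match) shows $H_1(M_i;\Z_p)\to H_1(W;\Z_p)$ is still an isomorphism. I would also need to observe that the relevant value of $r$ does not change: $r = \tfrac12\,\mathrm{rank}_{\Z_p}\,\mathrm{Coker}\{H_2(\partial W;\Z_p)\to H_2(W;\Z_p)\}$ should agree with its $\Z$-coefficient counterpart, which again follows from universal coefficients together with the fact that the relevant integral homology is finitely generated, so that reduction mod $p$ of a rank-$r$ cokernel plus torsion has $\Z_p$-dimension at least $r$; here one should be slightly careful, and the cleanest route is to note that $x_1,\dots,x_r$ and $y_1,\dots,y_r$ already exhibit, after reduction, a rank-$2r$ free summand in the image of the intersection form, forcing the $\Z_p$-rank of the cokernel to be exactly $2\cdot(\text{something})$ that makes the count work out to $r$ — in fact the existence of the required classes is what matters, so I would not dwell on matching $r$ on the nose but rather produce the classes directly.

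The core of the argument is the following. There is a coefficient change-of-rings map $H_2(W;\Z[\pi/\pi^{(n)}]) \to H_2(W;\Z_p[\pi/\pi^{(n)}])$ induced by $\Z \to \Z_p$, and it is compatible with the intersection forms in the sense that $\lambda_n^{\Z_p}(\bar u, \bar v)$ is the image of $\lambda_n^{\Z}(u,v)$ under $\Z[\pi/\pi^{(n)}] \to \Z_p[\pi/\pi^{(n)}]$, for $\bar u,\bar v$ the images of $u,v$. Taking $\bar x_i,\bar y_i$ to be the images of $x_i,y_i$, this immediately gives $\lambda_n^{\Z_p}(\bar x_i,\bar x_j)=0$ and $\lambda_n^{\Z_p}(\bar x_i,\bar y_j)=\delta_{ij}\cdot 1$ in $\Z_p[\pi/\pi^{(n)}]$, where $1$ is the image of $1\in\Z$, i.e. the unit of $\Z_p$. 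That is exactly condition (1) of Definition~\ref{def:n-cylinder} over $R=\Z_p$, with the same $r$. So the substantive content is entirely formal: the compatibility of $\lambda_n$ with coefficient reduction, plus the fact that $\delta_{ij}$ and $0$ are preserved under the ring homomorphism. I would spell out the compatibility via the description of $\lambda_n$ as the composite $H_2(W;R[\pi/\pi^{(n)}]) \to H_2(W,\partial W;R[\pi/\pi^{(n)}]) \cong H^2(W;R[\pi/\pi^{(n)}]) \to \overline{\mathrm{Hom}}_{R[\pi/\pi^{(n)}]}(H_2(W;R[\pi/\pi^{(n)}]), R[\pi/\pi^{(n)}])$ (Poincaré–Lefschetz duality plus evaluation), and note that each arrow is natural in the coefficient ring.

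I expect the main obstacle to be purely bookkeeping rather than conceptual: one must be careful that the classes $x_i,y_i$ actually have $\Z_p$-coefficient images lying in the group $H_2(W;\Z_p[\pi/\pi^{(n)}])$ whose intersection pairing is the one named in the definition, i.e. that the change-of-rings map $H_2(W;\Z[\pi/\pi^{(n)}])\to H_2(W;\Z_p[\pi/\pi^{(n)}])$ exists and is the natural one, which it does since $\Z_p[\pi/\pi^{(n)}]$ is obtained from $\Z[\pi/\pi^{(n)}]$ by the base change $-\otimes_\Z\Z_p$ and $H_*(W;-)$ is the homology of the chain complex $C_*(\widetilde W)\otimes_{\Z\pi}(-)$, so base change along $\Z\to\Z_p$ simply gives the natural transformation. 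The only place demanding a little care is the claim that $r$ is the same — or more precisely that we do not need it to be, because the produced classes $\bar x_i,\bar y_i$ satisfy the hyperbolic relations, and one checks that $2r$ is then automatically at most $\mathrm{rank}_{\Z_p}\mathrm{Coker}\{H_2(\partial W;\Z_p)\to H_2(W;\Z_p)\}$; since the reverse inequality $r_{\Z_p}\le r_{\Z}$ (say) can be delicate, I would instead invoke the standard fact (used throughout this literature, e.g. in \cite{Cochran-Orr-Teichner:1999-1, Cochran-Kim:2004-1, Cha:2010-1}) that for a spin $4$-manifold with the given boundary structure the relevant half-rank is computed by the same Euler-characteristic/signature count regardless of field coefficients, so $r_{\Z_p}=r_{\Z}=r$; then $\bar x_i,\bar y_i$ for $1\le i\le r$ are exactly the required data and $W$ is a $\Z_p$-coefficient $n$-cylinder.
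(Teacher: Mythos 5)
Your overall plan coincides with the paper's: check the $H_1(-;\Z_p)$-isomorphism and the spin condition, reduce the Lagrangian/dual classes along $\Z\to\Z_p$ and use naturality of the equivariant intersection form, and then verify that the number $r$ in the definition is the same over $\Z$ and over $\Z_p$. The first two points and the naturality step are fine as you wrote them.

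The genuine gap is in the last step, and it is the substantive one. You cannot ``not dwell on matching $r$ on the nose'': Definition~\ref{def:n-cylinder} requires the Lagrangian to have exactly $r = \tfrac12\rank_{\Z_p}\Coker\{H_2(\partial W;\Z_p)\to H_2(W;\Z_p)\}$ generators, so if that cokernel were larger over $\Z_p$ the reduced classes $\bar x_i,\bar y_i$ would be too few. And the fallback you invoke --- ``the relevant half-rank is computed by the same Euler-characteristic/signature count regardless of field coefficients'' --- does not actually prove what is needed. By universal coefficients, a $\Z/p^k$-summand in $H_2(W;\Z)$ increases \emph{both} $\dim_{\Z_p}H_2(W;\Z_p)$ and $\dim_{\Z_p}H_3(W;\Z_p)$ by one, so $\chi(W)$ is blind to exactly the discrepancy you need to rule out; the Euler characteristic cannot detect $p$-torsion in $H_2(W)$. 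What is really required is the statement $\rank_\Z H_2(W)=\rank_{\Z_p}H_2(W;\Z_p)$, and this needs a duality argument, not an Euler characteristic count. The paper gets there by first using the long exact sequence of $(W,\partial W)$ and the equality $\rank_R H_2(W,\partial W;R)=\rank_R H_2(W;R)$ to write $r(R)=\rank_R H_2(W;R)-(\ell-1)$ for $R\in\{\Z,\Z_p\}$, and then invoking \cite[Lemma~3.14]{Cha:2010-1}, whose hypothesis --- that $H_1(W)$ and $\Coker\{H_1(\partial W)\to H_1(W)\}$ have no $p$-torsion --- holds here because $H_1(M_i)\cong H_1(W)\cong\Z$. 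If you want a self-contained proof you should replace your Euler-characteristic appeal with this lemma (or reproduce its Poincar\'e--Lefschetz duality argument); as written, the step is not justified and in fact the proposed justification would fail.
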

\begin{proof}
	Let $W$ be an $n$-cylinder with connected boundary components $M_i$, $1\le i\le \ell$. Since $H_1(M_i)\cong H_1(W)\cong \Z$, the map $H_1(M_i;\Z_p)\to H_1(W;\Z_p)$ is an isomorphism for each $i$. For $R=\Z$ or $\Z_p$, let $r(R)=\rank_R\Coker\{H_2(\partial W;R)\to H_2(W;R)\}$. By naturality  of intersection forms, it suffices to show that $r(\Z)=r(\Z_p)$.  From the long exact sequence
	\begin{multline*}
	0\to \Coker\{H_2(\partial W;R)\to H_2(W;R)\} \\
	\to H_2(W,\partial W;R)\to H_1(\partial W;R)\to H_1(W;R)\to 0
	\end{multline*}
	and the fact that $\rank_R H_2(W,\partial W;R) = \rank_R H_2(W;R)$, we obtain that \[
r(R) = \rank_R H_2(W;R) -(\ell -1).
\] 
Since $H_1(M_i)\cong H_1(W)\cong \Z$, the groups $H_1(W)$ and $\Coker\{H_1(\partial W)\to H_1(W)\}$ have no $p$-torsion. Therefore, by \cite[Lemma~3.14]{Cha:2010-1} we have $\rank_\Z H_2(W)=\rank_{\Z_p} H_2(W;\Z_p)$. Therefore $r(\Z)=r(\Z_p)$.	
\end{proof}

An $n$-cylinder in \cite{Cochran-Kim:2004-1} appeared as a generalization of an $n$-solution in \cite{Cochran-Orr-Teichner:1999-1}. Below we generalize the notion of an $n$-solution to the notion of an $R$-coefficient $n$-solution. 

\begin{definition}(\cite[Section 8]{Cochran-Orr-Teichner:1999-1} for $R=\Z$ and $\Q$)
\label{def:n-solution}
	An $R$-coefficient $n$-cylinder with a single boundary component is also called an {\it $R$-coefficient $n$-solution}. A $\Z$-coefficient $n$-solution is called an {\it $n$-solution}. A closed 3-manifold $M$ with $H_1(M)\cong \Z$ is {\it $n$-solvable via $W$} if there exists an $n$-solution $W$ with boundary $M$. A knot $K$ in $S^3$ is {\it $n$-solvable via $W$} and $W$ is an {\it $n$-solution for $K$} if $M(K)$ is $n$-solvable via $W$.
\end{definition}

For each $n\in \frac12 \NN_0$, we denote by $\F_n$ the subset of $n$-solvable knots in $\cC$. It is known that $\F_n$ is a subgroup of $\cC$ and $\F_m\subset \F_n$ if $m\ge n$ \cite{Cochran-Orr-Teichner:1999-1}. Therefore, $\{\F_n\}$ is a filtration of $\cC$, and we call it the {\it solvable filtration} of $\cC$. The filtrations $\{\cG_n\}$ and $\{\F_n\}$ have the following relationship.

\begin{theorem}\label{thm:G_n and F_n}\cite[Theorem 8.11]{Cochran-Orr-Teichner:1999-1}
	Let $n\in \frac12 \NN_0$. If a knot $K$ bounds a grope of height $n+2$ in $D^4$, then $K$ is $n$-solvable. That is, $\G_{n+2}\subset \F_n$. 
\end{theorem}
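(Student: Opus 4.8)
The plan is to extract from a height-$(n+2)$ grope $G$ bounded by $K$ in $D^4$ a $4$-manifold $W$ with $\partial W = M(K)$ that is an $n$-solution, i.e.\ that satisfies the conditions of Definition~\ref{def:n-solution}. First I would form $W$ by taking a regular neighborhood $\nu(G)$ of the grope in $D^4$ together with the obvious way of converting the knot exterior into a manifold with boundary $M(K)$: removing a tubular neighborhood of $K$ from the boundary $S^3$ and gluing in $S^1\times D^2$ along the zero framing, so that $\partial W = M(K)$. Because a grope carries a product neighborhood (as required in Definition~\ref{def:grope}), $\nu(G)$ is a manifold and one checks via a Mayer--Vietoris/van Kampen computation that $H_1(W)\cong \Z$ generated by the meridian, and that the inclusion $M(K)\hookrightarrow W$ is a $\Z$-homology isomorphism on $H_1$; spinness holds since $H^2(W;\Z_2)$ is controlled by $H_1$. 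This gives the underlying cylinder hypotheses of Definition~\ref{def:n-cylinder} except for the existence of the $n$-Lagrangian and its $n$-dual.

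The heart of the argument is the construction of the $n$-Lagrangian. The bottom-stage surface $\Sigma$ of $G$ has genus $g$ with a symplectic basis $\{\alpha_i,\beta_i\}$, and onto each $\alpha_i$ and $\beta_i$ we have attached a height-$n$ grope. I would push the surfaces constituting these attached gropes slightly into the interior of $W$ to produce, for each $i$, embedded closed surfaces $A_i$ (capping $\alpha_i$) and $B_i$ (capping $\beta_i$): here one uses that a \emph{closed-up} height-$n$ grope caps off a circle, and that a surface of genus $h$ together with height-$(n-1)$ gropes on its symplectic basis defines a class whose fundamental group image lies in $\pi_1(W)^{(n)}$. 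The key computation — and the step I expect to be the main obstacle — is that the second-homology classes $[A_i]$ lie in the image of $H_2(W;\Z[\pi/\pi^{(n)}])$, i.e.\ that they lift to the $\pi/\pi^{(n)}$-cover, and that under $\lambda_n$ they have $\lambda_n([A_i],[A_j])=0$ while $\lambda_n([A_i],[B_j])=\delta_{ij}$. The lifting is an application of Stallings' theorem / the fact that a surface bounding height-$k$ gropes on its symplectic basis maps into the $k$-th derived subgroup: a height-$n$ grope attached along $\alpha_i$ forces $\alpha_i$ to die in $\pi_1(W)/\pi_1(W)^{(n)}$, so $A_i$ lifts; the intersection values $\lambda_n([A_i],[B_j])$ reduce to the geometric intersection numbers on $\Sigma$ (namely $\alpha_i\cdot\beta_j=\delta_{ij}$) because the higher-stage surfaces can be made disjoint in the interior and any correcting intersections carry group elements in $\pi^{(n)}$, hence vanish in $\Z[\pi/\pi^{(n)}]$. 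One also checks the rank count $r=\tfrac12\operatorname{rank}\operatorname{Coker}\{H_2(\partial W;\Q)\to H_2(W;\Q)\}$ matches $g$, which follows since $M(K)$ has $H_2=\Z$ generated by the surgery torus and the rest of $H_2(W;\Q)$ is spanned by the $[A_i],[B_j]$ (a half-lives-half-dies argument for the cylinder $W$).

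Finally, for the extra ``$.5$'' part one notes that $\G_{n+2}\subset\G_{(n+1)+1}$ only gives height $n+2$, but to get an $n$-solution (not $n.5$) we only need the $n$-Lagrangian, so no further lifting is required; the statement is exactly $\G_{n+2}\subset\F_n$ with integer $n$, using that the outermost two stages of the grope supply the bottom surface $\Sigma$ and its symplectic caps, and the remaining $n$ stages supply the derived-series depth. I would remark that the case of half-integer $n$ (where $\G_{n+2}$ means a height-$(n+2)$ symmetric grope with the asymmetric top) is handled identically, producing in addition the lifts $\tilde x_i$ to $\pi/\pi^{(n+1)}$ with $\lambda_{n+1}(\tilde x_i,\tilde x_j)=0$ from the extra half-stage, giving $\G_{n+2}\subset\F_n$ there too. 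Since this is precisely \cite[Theorem~8.11]{Cochran-Orr-Teichner:1999-1}, I would cite that proof for the detailed diagram chase and intersection computation, indicating that the only input is the product-neighborhood structure of gropes and the identification of grope-bounding surfaces with derived-series classes.
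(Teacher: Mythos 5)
The paper does not prove this statement; it is quoted with a citation to \cite[Theorem~8.11]{Cochran-Orr-Teichner:1999-1} and no in-paper argument, so there is no in-paper proof to compare your proposal against. Your outline does capture the two essential mechanisms of the COT proof, namely that a $2$-cycle whose symplectic basis curves bound disjoint gropes of height $n-1$ has $\pi_1$-image in $\pi_1(W)^{(n)}$ (hence lifts to $H_2(W;\Z[\pi_1W/\pi_1W^{(n)}])$), and that the branches of the grope can be separated so that the $\lambda_n$-pairings reduce to geometric intersections on the bottom-stage surface; deferring the remaining bookkeeping to the cited source is reasonable.

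The construction of $W$, however, does not work as stated. A regular neighbourhood $\nu(G)$ of the whole grope, even after gluing a $0$-framed solid torus to $K$, does not have boundary $M(K)$: the part of $\partial\nu(G)$ lying in the interior of $D^4$ is a nontrivial closed $3$-manifold that remains as a boundary component of your $W$. Worse, $\nu(G)$ deformation retracts to $G$, and $H_1(G)$ is not $\Z$ once there are uncapped symplectic curves on the top-stage surfaces, so even the $H_1$ and cylinder hypotheses of Definition~\ref{def:n-cylinder} fail. The construction that Cochran--Orr--Teichner actually use (essentially Freedman--Quinn's symmetric grope surgery) is on the complement side: one removes a tubular neighbourhood of the \emph{bottom-stage surface} $\Sigma$ from $D^4$ and reglues by attaching $2$-handles along the $\alpha_i$ (framed by $\Sigma$; possible since the $\alpha_i$ bound higher gropes disjoint from $\Sigma$), so that the interior boundary piece $\Sigma\times S^1$ is converted to $D^2\times S^1$ and $\partial W = M(K)$. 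The pushed-off higher-stage grope surfaces attached to the $\alpha_i$ and $\beta_i$ then carry the $n$-Lagrangian and its dual. You should also correct the last sentence: for a half-integer $n=m+\tfrac12$ the additional lift demanded by Definition~\ref{def:n-cylinder}(2) is to $H_2(W;\Z[\pi_1W/\pi_1W^{(m+1)}])$, supplied by the asymmetric top of a height-$(m+2.5)$ grope; the expression $\pi/\pi^{(n+1)}$ with $n$ a half-integer is not defined, since the derived series is integer-indexed.
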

It is unknown whether or not the converse $\F_n\subset \G_{n+2}$ holds in general, but it is known that $\F_n=\G_{n+2}$ when $n=0,\,0.5$ \cite[Theorem~8.13 and Remark~8.14]{Cochran-Orr-Teichner:1999-1}.

\subsection{Amenable signatures}\label{subsec:amenable signature}

In this subsection, we briefly review the von Neumann--Cheeger--Gromov $\rhot$-invariants \cite{Cheeger-Gromov:1985-1}, and introduce Amenable Signature Theorem~\ref{thm:obstruction}. For a closed 3-manifold $M$, let $\Gamma$ be a countable  group and $\psi\colon \pi_1M\to \Gamma$ a group homomorphism. Then Chang and Weinberger \cite{Chang-Weinberger:2003-1} showed that there exists a group $G$ containing $\Gamma$ and a 4-manifold $W$ such that  $\phi\colon \pi_1M\xrightarrow{\psi} \Gamma\hookrightarrow  G$ extends to $ \pi_1W\to G$. Then, for $\N G$, which is the group von Neumann algebra of $G$, using $\phi$ we obtain a homomorphism $\Z[\pi_1W]\to \N G$ and the corresponding equivariant hermitian intersection form
\[
\lambda\colon H_2(W;\N G)\times H_2(W;\N G)\to \N G
\]
and the $\Lt$-signature $\lsign_G(W) \in \R$. Let $\sign(W)$ be the ordinary signature of $W$ and let $S_G(W):=\lsign_G(W) - \sign(W)$, an $\Lt$-signature defect. Then the {\it von Neumann--Cheeger--Gromov $\rhot$-invariant associated to $(M,\psi)$} is defined to be
\[
\rhot(M,\psi) := S_G(W).
\]
It is known that it is independent of the choices of a group $G$ and a 4-manifold of $W$. For more details on the von Neumann--Cheeger--Gromov $\rhot$-invariants, refer to \cite{Cochran-Teichner:2003-1, Cha:2010-1}.

The following lemma gives the existence of a universal upper bound on the $\rhot$-invariants for a fixed closed 3-manifold. 
\begin{lemma}\label{lem:universal bound}
	\begin{enumerate}
		\item \cite{Cheeger-Gromov:1985-1,Ramachandran:1993-1} For a closed 3-manifold $M$, there exists a constant $C_M$ such that $|\rhot(M,\phi)|\le C_M$ for every homomorphism $\phi\colon \pi_1M\to G$ where $G$ is a group. 
		\item \cite{Cha:2014-1} Furthermore, if $M=M(K)$ for a knot $K$ with crossing number $c(K)$, then we can take $C_M=69713280\cdot c(K)$.
	\end{enumerate}
\end{lemma}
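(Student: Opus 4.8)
\emph{Strategy.} The plan is to prove part (1) first --- in its plain form this is the classical analytic theorem of Cheeger--Gromov \cite{Cheeger-Gromov:1985-1,Ramachandran:1993-1} --- and then to deduce part (2) from an \emph{effective} reproof of (1) whose constant is linear in the simplicial complexity of $M$, which is Cha's topological/combinatorial method \cite{Cha:2014-1}. For the analytic proof of (1): fix a Riemannian metric $g$ on $M$; for $\phi\colon\pi_1M\to G$ one has $\rhot(M,\phi)=\eta^{(2)}(M,\phi)-\eta(M)$, the difference between the $L^2$-eta invariant of the signature operator twisted by the flat $\N G$-bundle determined by $\phi$ and the ordinary eta invariant of the signature operator. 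The key point is that this difference is controlled by the \emph{local} geometry of $(M,g)$, uniformly in $\phi$ and $G$: combining Cheeger and Gromov's bounds on the von Neumann dimensions of $L^2$-cohomology with heat-kernel/local-index estimates, and using Ramachandran's $L^2$-index theorem for the $4$-manifolds-with-boundary entering the definition of $\eta^{(2)}$, one obtains $|\eta^{(2)}(M,\phi)-\eta(M)|\le C(M,g)$ with $C(M,g)$ depending only on $(M,g)$ (through curvature bounds, injectivity radius and volume). Then $C_M:=C(M,g)$ works, which is (1).

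\emph{The effective route.} Given $\phi\colon\pi_1M\to G$, the Chang--Weinberger construction \cite{Chang-Weinberger:2003-1} provides a group $G'\supseteq G$ and a compact $4$-manifold $W$ with $\partial W=M$ over which $\phi$ extends to $\pi_1W\to G'$, so that $\rhot(M,\phi)=\lsign_{G'}(W)-\sign(W)$. First I would record the elementary estimate
\[
|\rhot(M,\phi)| = |\lsign_{G'}(W)-\sign(W)| \le 2\,n_2(W),
\]
where $n_2(W)$ is the number of $2$-handles in a handle decomposition of $W$ relative to $M$: the $L^2$- and ordinary signatures of $W$ are bounded in absolute value by $\ldim_{G'}H_2(W;\N{G'})$ and by $\rank_{\Q}H_2(W;\Q)$ respectively, and each of these is at most $n_2(W)$ because the degree-$2$ cellular chain module is free of rank $n_2(W)$. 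The heart of the matter is then to produce, for \emph{every} $\phi$, such a $W$ with $n_2(W)\le C\cdot n$, where $n$ is the number of tetrahedra in a fixed triangulation of $M$ and $C$ is universal (independent of $\phi$ and $G$): from the triangulation one reads off a presentation of $\pi_1M$ with $O(n)$ generators and relators of length $O(n)$, and caps off $M\times[0,1]$ with $2$- and $3$-handles guided both by this presentation and by a bounded-complexity null-bordism of the class $\phi_*[M]\in H_3(G;\Z)$ --- such a null-bordism exists, with complexity controlled by $n$, after a Chang--Weinberger enlargement of $G$, the enlargement in particular absorbing the torsion of $H_3(G;\Z)$. Tracking Cha's constants yields $|\rhot(M,\phi)|\le 363090\cdot n$, i.e.\ part (1) in effective form.

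\emph{From $n$ to $c(K)$.} For (2) it only remains to bound the number of tetrahedra of $M(K)$ by a multiple of $c(K)$. From a diagram of $K$ with $c(K)$ crossings I would build the exterior $E(K)$ by the standard ``one block per crossing'' recipe, triangulate it with $O(c(K))$ tetrahedra, and then Dehn-fill along the $0$-framed curve, which costs a further $O(c(K))$ tetrahedra; a careful count produces a triangulation of $M(K)$ with at most $192\,c(K)$ tetrahedra. Feeding this into the effective bound of the previous paragraph gives
\[
|\rhot(M(K),\phi)| \le 363090\cdot 192\cdot c(K) = 69713280\cdot c(K),
\]
which is (2).

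\emph{Main obstacle.} The delicate step is the controlled bounding construction: for an arbitrary target group $G$ --- possibly infinitely generated, non-residually-finite, and with torsion --- one must realize a null-bordism of $\phi_*[M]$ by a $4$-manifold whose $2$-handle count is controlled by $M$ alone, which forces passing to a Chang--Weinberger enlargement while keeping its contribution to the complexity bounded and handling the torsion in $H_3(G;\Z)$. Pinning down the sharp numerical constant in (2) then amounts to an explicit and somewhat intricate bookkeeping: optimizing the number of $2$-handles per tetrahedron in the bounding construction, and the number of tetrahedra per crossing in a triangulation of $M(K)$ built from a diagram.
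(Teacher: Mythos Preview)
The paper does not supply a proof of this lemma: it is stated with citations to \cite{Cheeger-Gromov:1985-1,Ramachandran:1993-1} for part~(1) and to \cite{Cha:2014-1} for part~(2), and is quoted as an imported result rather than reproved. So there is no ``paper's own proof'' to compare against.

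That said, your sketch is a faithful outline of the arguments in those references. For part~(1) you correctly identify the Cheeger--Gromov analytic mechanism (uniform local heat-kernel estimates on the $L^2$-eta invariant, combined with Ramachandran's $L^2$-index theorem). For part~(2) you reproduce Cha's strategy accurately: the elementary bound $|\rhot(M,\phi)|\le 2\,n_2(W)$, the controlled-complexity bounding $4$-manifold built via a Chang--Weinberger-type enlargement yielding the universal constant $363090$ per tetrahedron, and the triangulation of $M(K)$ with at most $192\cdot c(K)$ tetrahedra, whose product gives the stated $69713280\cdot c(K)$. Your identification of the ``main obstacle'' --- producing a bounding $W$ whose $2$-handle count is controlled by $M$ alone, uniformly over all target groups $G$ --- is exactly the technical heart of \cite{Cha:2014-1}. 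Nothing more is expected here; in the context of this paper the lemma is a black box.
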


Using $\Lt$-theoretic techniques and the amenable signature theorem on homology bordism developed in \cite{Cha-Orr:2009-1}, Cha obtained the following obstruction for a knot to being $n.5$-solvable, which generalizes the obstruction from the $\rhot$-invariant associated with a PTFA representation in \cite{Cochran-Orr-Teichner:1999-1}.

\begin{theorem}[Amenable Signature Theorem for $n.5$-solvability]
\label{thm:obstruction}
\cite[Theorem 1.3]{Cha:2010-1}
	Let $K$ be an $n.5$-solvable knot. Let $G$ be an amenable group lying in Strebel's class $D(R)$ for $R=\Q$ or $\Z_p$ such that $G^{(n+1)}=\{e\}$. Let $\phi\colon \pi_1M(K)\to G$ be a homomorphism which sends the meridian of $K$ to an infinite order element in $G$. Suppose $\phi$ extends to an $n.5$-solution for $M(K)$. Then, $\rhot(M(K),\phi)=0$.
\end{theorem}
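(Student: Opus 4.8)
The plan is to follow Cha's strategy from \cite[Theorem 1.3]{Cha:2010-1}, reducing the statement to the amenable signature theorem for homology bordism of \cite{Cha-Orr:2009-1}. First I would let $W$ be an $n.5$-solution for $K$, so $M(K) = \partial W$ and $W$ satisfies the conditions of Definition~\ref{def:n-solution} with an $n$-Lagrangian $\langle x_1,\ldots,x_r\rangle$ having $n$-dual $\langle y_1,\ldots,y_r\rangle$ in $H_2(W;\Z[\pi/\pi^{(n)}])$, together with lifts $\tilde x_i$ in $H_2(W;\Z[\pi/\pi^{(n+1)}])$ with $\lambda_{n+1}(\tilde x_i,\tilde x_j)=0$. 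The first task is to extend the representation $\phi\colon\pi_1M(K)\to G$ over $\pi_1 W$; since $\phi$ factors through $\pi_1M(K)/\pi_1M(K)^{(n+1)}$ (because $G^{(n+1)}=\{e\}$) and the inclusion $M(K)\hookrightarrow W$ induces $\pi_1M(K)/\pi_1M(K)^{(n+1)} \to \pi_1W/\pi_1W^{(n+1)}$, one checks that the target of this derived quotient still maps to $G$; if not, one passes to an amenable group in $D(R)$ containing $G$ as in the Chang--Weinberger construction recalled above, which does not change $\rhot(M(K),\phi)$.

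The heart of the argument is the dimension computation. Using the extended representation, form $H_2(W;\mathcal N G)$ and the $L^2$-intersection form $\lambda_{\mathcal N G}$. By the $L^2$-theoretic estimates of \cite{Cha-Orr:2009-1} (valid precisely because $G$ is amenable and lies in $D(R)$ for $R=\Q$ or $\Z_p$ — this is where Theorem~\ref{thm:obstruction} genuinely extends the PTFA case of \cite[Theorem 4.2]{Cochran-Orr-Teichner:1999-1}), the images of the $n$-Lagrangian and its $n$-dual under $\Z[\pi/\pi^{(n)}]\to\mathcal N G$ span a subspace of $L^2$-dimension at least $2r$ on which $\lambda_{\mathcal N G}$ is nondegenerate. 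The $(n+1)$-st order data — the vanishing $\lambda_{n+1}(\tilde x_i,\tilde x_j)=0$ together with the hypothesis that $\phi$ sends the meridian to an infinite order element, which forces the relevant twisted homology of the solid-torus-like pieces to vanish in $L^2$-dimension — upgrades this: the image of the $(n+1)$-Lagrangian is an isotropic subspace of half the total $L^2$-dimension of $H_2(W;\mathcal N G)$. Hence $\lambda_{\mathcal N G}$ is metabolic, so $\lsign_G(W)=0$, and likewise the ordinary intersection form is metabolic over $\Q$, giving $\sign(W)=0$. Therefore $S_G(W)=\lsign_G(W)-\sign(W)=0$, and since $\rhot(M(K),\phi)=S_G(W)$ by definition and independence of the chosen $4$-manifold, we conclude $\rhot(M(K),\phi)=0$.

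The main obstacle is the middle step: establishing that the $(n+1)$-Lagrangian has image of full half-dimension in $H_2(W;\mathcal N G)$, i.e.\ that no $L^2$-dimension is ``lost'' when passing from $\Z[\pi/\pi^{(n+1)}]$-coefficients to $\mathcal N G$-coefficients and that nothing extra is gained from $H_2(M(K);\mathcal N G)$. This requires the controlled $L^2$-dimension arguments and the homology bordism invariance of $L^2$-Betti numbers from \cite{Cha-Orr:2009-1}, applied with the meridian hypothesis to kill the boundary contribution; in the PTFA setting this is \cite[Theorem 4.2]{Cochran-Orr-Teichner:1999-1}, and the point of the amenable/$D(R)$ hypotheses is exactly to make those dimension-counting lemmas go through when $G$ has torsion. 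Everything else — extending $\phi$, the signature-defect bookkeeping, independence of $W$ — is formal given the results quoted above.
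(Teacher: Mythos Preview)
The paper does not give its own proof of this theorem; it is quoted verbatim from \cite[Theorem~1.3]{Cha:2010-1} and used as a black box. The closest thing in the paper is the sketch of proof of Theorem~\ref{thm:vanishing rho-invariant-(n,p)}, which is the analogous statement for $(n.5,p)$-solvability, and that sketch follows exactly the strategy you outline: show $\sign(W)=0$ via the images of the Lagrangian and dual in $H_2(W;\Q)$, and show $\lsign_G(W)=0$ by proving that the image $H$ of the $(n+1)$-Lagrangian in $H_2(W;\N G)$ has $\ldim H\ge r = \tfrac12 \ldim H_2(W;\N G)$, invoking \cite[Theorem~3.11, Lemmas~3.13--3.14, Proposition~3.7]{Cha:2010-1}. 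So your overall plan matches the intended argument.

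Two corrections. First, your ``first task'' of extending $\phi$ to $\pi_1W$ is not a task at all: it is a hypothesis of the theorem (``Suppose $\phi$ extends to an $n.5$-solution for $M(K)$''), so the Chang--Weinberger detour and the discussion of derived-quotient functoriality are unnecessary and should be dropped. Second, your description of the dimension step is slightly off: one does not show that the Lagrangian-plus-dual span $L^2$-dimension $\ge 2r$; rather one shows $\ldim H_2(W;\N G)=2r$ (this is where the meridian-to-infinite-order hypothesis enters, via \cite[Lemmas~3.13--3.14]{Cha:2010-1}, to equate the $L^2$-Betti number with the $R$-Betti number) and separately that the isotropic image of the $\tilde x_i$ has $\ldim\ge r$ (this is \cite[Theorem~3.11]{Cha:2010-1}, using the $n$-duals to bound the $R$-dimension of the image in $H_2(W;R)$ from below). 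Your ``solid-torus-like pieces'' language misplaces where the meridian condition is used.
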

The only amenable groups in Strebel's class $D(R)$ which we will use in this paper are the groups given in Lemma~\ref{lem:amenable and D(R)} below, and we will not need the definitions of amenable group and Strebel's class $D(R)$. (One may find the definitions in \cite{Cha-Orr:2009-1}.)

The computation of $\rhot$-invariants is not easy in general, but for the $\rhot$-invariants associated to an abelian representation, we have the lemma below. For a knot $K$, let $\sigma_K$ be the Levine-Tristram signature function. That is, for $\omega\in S^1\subset \C$, $\sigma_K(\omega)$ is the signature of the hermitian matrix $(1-\omega)A+(1-\bar{\omega})A^T$ where $A$ is a Seifert matrix for $K$.

\begin{lemma}\label{lem:computation of rho-invariant}\cite[Proposition 5.1]{Cochran-Orr-Teichner:2002-1}\cite[Corollary 4.3]{Friedl:2003-5}
	Let $K$ be a knot and let $\mu$ be the meridian of $K$. If $\phi\colon \pi_1M(K)\to G$ is a homomorphism whose image is abelian, then
	\[
	\rhot(M(K),\phi)= 
	\begin{cases}
	\int_{S^1} \sigma_K(\omega)\,d\omega & \mbox{if } \phi(\mu)\in G \mbox{ has infinite order} \\[1ex]
	\sum_{r=0}^{d-1}\sigma_K(e^{2\pi r\sqrt{-1}/d}) & \mbox{if } \phi(\mu)\in G \mbox{ has finite order } d.
	\end{cases}
	\]
	
\end{lemma}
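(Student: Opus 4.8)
The plan is to reduce the computation of $\rhot(M(K),\phi)$ for an abelian representation to a computation on an explicit bounding 4-manifold, following the standard strategy of Cochran--Orr--Teichner. First I would observe that since the image $A:=\phi(\pi_1 M(K))$ is abelian, it is amenable, so we may take $G=A$ itself in the definition of $\rhot$; the value of $\rhot$ is independent of the choice of ambient group. Moreover, since $H_1(M(K))\cong\Z$ is generated by the meridian $\mu$, the map $\phi$ factors through the abelianization, so $\phi$ is determined by the element $a:=\phi(\mu)\in A$. We are therefore computing an $L^2$-signature defect for a representation $\pi_1M(K)\to\langle a\rangle$, where $\langle a\rangle$ is either infinite cyclic or cyclic of order $d$.

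Next I would produce a convenient 4-manifold $W$ with $\partial W = M(K)$ over which $\phi$ extends, and compute $\lsign_{\langle a\rangle}(W)$ and $\sign(W)$ directly. The natural choice is built from a Seifert surface: push a Seifert surface $F$ for $K$ into $D^4$, and let $W$ be obtained from $D^4$ by the appropriate surgery so that $\partial W = M(K)$ rather than $S^3$; equivalently, one uses the standard cobordism realizing $0$-surgery. Then $H_2(W;\Z)$ is presented by the symmetrized Seifert form, $\sign(W)=\sigma_K(1)=0$, and the twisted intersection form $H_2(W;\Z[\langle a\rangle])$ is computed by the matrix $(1-a)A+(1-a^{-1})A^T$ over the group ring. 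In the infinite cyclic case, pushing this matrix through the $L^2$-trace and integrating over the unit circle (by the correspondence between $\N\Z$-traces and integration of the Fourier transform over $S^1$) yields $\int_{S^1}\sigma_K(\omega)\,d\omega$; in the order-$d$ case the group ring $\Z[\Z/d]$ embeds in $\prod_{r=0}^{d-1}\C$ by evaluation at the $d$-th roots of unity, the $L^2$-signature becomes the average of the pointwise signatures, and multiplying back by $|\langle a\rangle|=d$ in the signature-defect normalization gives $\sum_{r=0}^{d-1}\sigma_K(e^{2\pi r\sqrt{-1}/d})$. One must check that the contribution of $\sign(W)$ and of the non-twisted part of $H_2$ vanishes, which it does because $\sigma_K(1)=0$.

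The main obstacle I expect is bookkeeping with the normalizations: the precise relationship between $\rhot(M,\phi)$ defined as $\lsign_G(W)-\sign(W)$ and the classical Levine--Tristram signatures depends on conventions for the $L^2$-signature of a form over $\N G$ and for the integration measure on $S^1$, and one has to be careful that $\langle a\rangle$ being a proper (possibly finite) quotient of $\Z$ does not introduce an extra factor. Since this lemma is quoted verbatim from \cite[Proposition 5.1]{Cochran-Orr-Teichner:2002-1} and \cite[Corollary 4.3]{Friedl:2003-5}, in the paper itself I would simply cite those sources rather than reproduce the argument; the sketch above is the argument one would give if pressed, and the delicate point is purely the matching of conventions, not any substantive topology.
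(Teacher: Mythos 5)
The paper gives no proof of this lemma at all: it is stated with the two citations and used as a black box, so your closing remark --- that in the paper you would simply cite COT Proposition~5.1 and Friedl Corollary~4.3 --- is exactly what the paper does, and in that sense your approach matches.

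Since you did offer a sketch, two details deserve more care than you give them. First, the 4-manifold: attaching a 0-framed 2-handle to $D^4$ along $K$ (equivalently, the ``surgery so that $\partial W = M(K)$'') produces a simply connected $W$, so the abelianization $\pi_1 M(K)\to\Z$ does not extend nontrivially over it; pushing a Seifert surface into $D^4$ by itself does not change $\pi_1$. To get a $W$ with $\pi_1 W\cong\Z$ over which $\phi$ extends, one needs a further step --- for instance, surgery along the capped-off Seifert surface inside the 0-trace, or simply quoting that $\Omega_3(S^1)=0$. That extra step is where the identification of the twisted intersection form with $(1-\omega)A+(1-\bar\omega)A^T$ actually comes from, and it is not automatic from the construction you describe. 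Second, the normalization in the finite-cyclic case: the phrase ``multiplying back by $|\langle a\rangle|=d$ in the signature-defect normalization'' is not a step in the standard account. The $\ell^2$-trace on $\mathcal{N}(\Z/d)\cong\C^d$ is $\tfrac1d$ times the ordinary sum of evaluations at the $d$-th roots of unity, so whether the final formula reads $\sum_r\sigma_K(\omega^r)$ or $\tfrac1d\sum_r\sigma_K(\omega^r)$ is precisely the kind of convention-dependent factor that must be checked against Friedl's Corollary~4.3 rather than patched by an ad hoc multiplication. (For the purposes of this paper the factor is harmless --- only nonvanishing and magnitude relative to a constant $C$ are used --- but your sketch should not invent a step to absorb it.)
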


\subsection{Mixed-coefficient commutator series}\label{subsec:commutator series}
To show linear independence of knots modulo $n.5$-solvability, we will use Amenable Signature Theorem~\ref{thm:obstruction}, which is available for the $\rhot$-invariants associated to a representation to a group which is amenable and in $D(R)$. In this subsection, we give examples of groups which are amenable and in $D(R)$. Namely, for a group $G$, we will construct a certain subnormal series $\{\cP^kG\}$ of $G$ such that $G/\cP^kG$ are amenable and  in $D(R)$.
\begin{definition}\cite[Definition 4.1]{Cha:2010-1}\label{def:commutator series} Let $G$ be a group and $\cP=(R_0,R_1,\ldots)$ be a sequence of rings with unity. The {\it $\cP$-mixed-coefficient commutator series $\{\cP^kG\}$} of $G$ is defined inductively as follows: let $\cP^0G:=G$. For a nonnegative integer $k$, we define
\[
\cP^{k+1}G := \Ker\left\{ \cP^kG\to \frac{\cP^kG}{[\cP^kG,\cP^kG]}\to  \frac{\cP^kG}{[\cP^kG,\cP^kG]}\otimesover\Z R_k  \right\}.
\] 
	
\end{definition}

We note that $ \cP^kG/[\cP^kG,\cP^kG]\cong H_1(G;\Z[G/\cP^kG])$ and $(\cP^kG/[\cP^kG,\cP^kG])\otimes_\Z R_k\cong H_1(G;R_k[G/\cP^kG])$. For example, if $R_k=\Z$ for all $k$, then $\cP^kG = G^{(k)}$, the $k$-th derived group of $G$. Also if $R_k=\Q$ for all $k$, then $\cP^kG=G^{(k)}_r$, the $k$-th rational derived group of $G$. Note that $G^{(k)}\subset \cP^kG$ for all $\cP$ and $k$, and the group $\cP^kG/\cP^{k+1}G$ injects into $H_1(G;R_k[G/\cP^kG])$.

Using $\cP$-mixed-coefficient commutator series, we obtain groups which are amenable and in $D(R)$ as below.

\begin{lemma}\label{lem:amenable and D(R)}\cite[Lemma 4.3]{Cha:2010-1}
Let $G$ be a group and $n$ a nonnegative integer. Let $\cP=(R_0, R_1, \ldots)$ be a sequence of rings with unity such that for each $k< n$, every integer relatively prime to $p$ is invertible in $R_k$. Then, for each $k\le n$, the group $G/\cP^kG$ is amenable and lies in $D(\Z_p)$.
\end{lemma}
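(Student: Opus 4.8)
**The plan is to prove Lemma~\ref{lem:amenable and D(R)} by induction on $k$, showing simultaneously that $G/\cP^kG$ is amenable and that it lies in Strebel's class $D(\Z_p)$.** The key structural observation is that the mixed-coefficient commutator series gives a subnormal series
\[
G/\cP^kG = \cP^0G/\cP^kG \rhd \cP^1G/\cP^kG \rhd \cdots \rhd \cP^{k-1}G/\cP^kG \rhd \{e\}
\]
of $G/\cP^kG$ whose successive quotients are the groups $\cP^jG/\cP^{j+1}G$ for $0 \le j < k$. By Definition~\ref{def:commutator series}, each such quotient injects into $H_1(G;R_j[G/\cP^jG]) = (\cP^jG/[\cP^jG,\cP^jG])\otimes_\Z R_j$, which is an abelian group. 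Since $j < k \le n$, the hypothesis says every integer coprime to $p$ is invertible in $R_j$, so this abelian group is a module over a ring in which all such integers are units; in particular $\cP^jG/\cP^{j+1}G$ is an abelian group with no torsion coprime to $p$, i.e. its torsion subgroup is a $p$-group (each element has order a power of $p$).

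First I would handle amenability. Abelian groups are amenable, and amenability is closed under extensions; since $G/\cP^kG$ is built from the $k$ abelian pieces $\cP^jG/\cP^{j+1}G$ by iterated extensions, it is amenable. (Formally, one inducts: $G/\cP^{j+1}G$ is an extension of the amenable group $G/\cP^jG$ by the abelian group $\cP^jG/\cP^{j+1}G$.) Next I would address membership in $D(\Z_p)$. Here I would appeal to the characterization of Strebel's class used in \cite{Cha-Orr:2009-1}: a group lies in $D(\Z_p)$ if it admits a subnormal series whose successive quotients are either torsion-free abelian or abelian of exponent a power of $p$ (equivalently, abelian groups on which the classes of integers coprime to $p$ act invertibly). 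Each quotient $\cP^jG/\cP^{j+1}G$ is, by the paragraph above, an abelian $\Z_{(p)}$-type module — it splits (or at least has its torsion concentrated) into a torsion-free part and a $p$-primary part — so refining the series $\{\cP^jG/\cP^kG\}$ slightly if necessary to separate torsion-free and $p$-torsion pieces yields a subnormal series of the required form. Hence $G/\cP^kG \in D(\Z_p)$.

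**The main obstacle I expect is bookkeeping the Strebel-class definition correctly**, rather than any deep argument: one must check that the relevant closure property of $D(\Z_p)$ (closure under extension by abelian groups whose torsion is $p$-primary, with torsion-free abelian groups allowed as well) is exactly what \cite{Cha-Orr:2009-1} proves, and that the quotients $\cP^jG/\cP^{j+1}G$ genuinely have no $\ell$-torsion for primes $\ell \neq p$ — which follows because they embed in an $R_j$-module and $\ell$ is invertible in $R_j$, so multiplication by $\ell$ is injective on the whole module and hence on the subgroup. The amenability half is routine once the subnormal series with abelian quotients is in hand. Since the lemma is quoted verbatim from \cite[Lemma 4.3]{Cha:2010-1}, in the write-up I would either reproduce this short induction or simply cite that source; the honest proof is the two-line induction sketched above together with the two cited closure properties of amenable groups and of $D(\Z_p)$.
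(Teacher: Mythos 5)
The paper does not actually supply a proof of this lemma; it is quoted verbatim from \cite[Lemma~4.3]{Cha:2010-1} and cited, so there is no internal proof to compare against. Your reconstruction is close to what one would expect, but there is one step I would flag as a genuine gap.

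The first half is fine: the subnormal series $G/\cP^kG \rhd \cP^1G/\cP^kG \rhd \cdots \rhd \{e\}$ has abelian successive quotients $\cP^jG/\cP^{j+1}G$, each of which embeds in the $R_j$-module $H_1(G;R_j[G/\cP^jG])$. Since every integer $\ell$ coprime to $p$ is a unit in $R_j$ for $j<n$, multiplication by $\ell$ is injective on that module and hence on the subgroup, so $\cP^jG/\cP^{j+1}G$ has no torsion coprime to $p$. Amenability follows by closure of amenability under extensions with abelian quotients.

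The gap is in the $D(\Z_p)$ part. You invoke a characterization via a subnormal series with quotients that are "either torsion-free abelian or abelian of exponent a power of $p$," and then propose to "refine the series slightly" to split each $\cP^jG/\cP^{j+1}G$ into a torsion-free piece and a $p$-primary piece. That splitting is not available in general: an abelian group with no torsion coprime to $p$ need not decompose as (torsion-free) $\oplus$ ($p$-torsion), and the proposed refinement might not exist. Moreover, the embedding into an $R_j$-module gives only that multiplication by $\ell$ (for $\ell$ coprime to $p$) is injective on $\cP^jG/\cP^{j+1}G$, not surjective, so the quotient need not be a $\Z_{(p)}$-module either — your parenthetical "equivalently" is not an equivalence. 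What actually closes the argument is the result the present paper itself uses (see the proof of Theorem~\ref{thm:refined_main_theorem-2}, invoking \cite[Lemma~6.8]{Cha-Orr:2009-1}): a group admitting a subnormal series whose successive quotients are abelian with no torsion coprime to $p$ is amenable and lies in $D(\Z_p)$. With that statement the refinement step is unnecessary; you should delete it and cite the closure property directly, which both shortens and fixes the argument.
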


Later in the proof of Theorem~\ref{thm:main}, we will use a $\cP$-mixed-coefficient commutator series where $\cP=(R_0.R_1,\ldots, R_n)$ with $R_i=\Q$ for $0\le i\le n-1$ and $R_n=\Z_p$ for some prime $p$ and a representation to the group $G/\cP^{n+1}G$ for some group $G$. In this case, $G/\cP^{n+1}G$ has a subgroup $\cP^nG/\cP^{n+1}G$ which injects into a $\Z_p$-vector space $H_1(G;\Z_p[G/\cP^nG])$. Therefore $G/\cP^{n+1}G$ has $p$-torsion elements and it is not a PTFA group. 

\section{Construction of examples}\label{sec:construction}
In this section, we discuss how to construct a knot bounding a grope of height $n+2$ in $D^4$. We will construct such a knot using a process called {\it infection} or {\it satellite construction}. Let $K$ be a knot in $S^3$ and $n$ a positive integer. Let $\eta_1,\eta_2,\ldots, \eta_m$ be disjoint simple closed curves in $E(K)$ such that  $\eta_i\in\pi_1E(K) ^{(n)}$ for all $i$. Suppose $\eta_i$ form an unlink in $S^3$. Let $J_1, \ldots, J_m$ be knots. For each $1\le i\le m$, remove the open tubular neighborhood of $\eta_i$ in $S^3$ and  glue in the exterior of $J_i$ by identifying their common boundaries using an orientation-reversing homeomorphism in such a way that the meridian (resp. the longitude) of $\eta_i$ is identified with the longitude (resp. the meridian) of $J_i$. The resulting 3-manifold is homeomorphic to $S^3$, and now the knot $K$ becomes a new knot in this $S^3$. We denote this knot by $K(\eta_i;J_i)$ or $K(\eta_1,\ldots, \eta_m; J_1,\ldots, J_m)$ and say that it is obtained by {\it infecting $K$ by $J_i$ along $\eta_i$}. In the case that $J_i=J$ for some knot $J$ for all $i$, we simply write $K(\eta_1,\ldots, \eta_m;J)$ or $K(\eta_i;J)$. We call $K$, $\eta_i$, and $J_i$ the {\it seed knot}, the {\it axes}, and the {\it auxiliary knots} (or {\it infection knots}), respectively. Fore more details, we refer the reader to \cite{Cochran-Orr-Teichner:2002-1}. 

Roughly speaking, a grope of height $n+2$ bounded by $K(\eta_i;J_i)$ is constructed by `stacking' gropes of height 2 bounded by $J_i$ and gropes of height $n$ bounded by $\eta_i$. Construction of a grope under infection was investigated in \cite{Cochran-Teichner:2003-1} and  later in \cite{Horn:2010-1}. Afterwards, a more systematic way of construction was given in \cite{Cha:2012-1,Cha-Kim:2016-1}. For instance, the following theorem is implicitly proved in \cite[Definition~4.4]{Cha-Kim:2016-1}. In the following, a {\it capped grope} is a grope with disks attached along symplectic basis curve on the top stage surfaces of the grope. 

\begin{theorem}\label{thm:grope of height n+2} \cite{Cha-Kim:2016-1} Let $K$ be a knot and let $\eta_i$, $1\le i\le m$, be curves in $S^3\setminus K$ which form an unlink in $S^3$. Suppose that $\eta_i$ bound disjoint capped gropes of height $n$ in $S^3$ which do not meet $K$ except for the caps. For each $i$ with $1\le i\le m$, suppose that $J_i$ is a knot which bounds a grope of height 2 in $D^4$. Then the knot $K(\eta_i;J_i)$ bounds a grope of height $n+2$ in $D^4$. 
\end{theorem}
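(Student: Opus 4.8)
The $\eta_i$ bound pairwise disjoint capped gropes $G_i$ of height $n$ whose bodies miss $K$, and each $J_i$ bounds a height-$2$ grope $P_i$ that we may take inside a ball in $D^4$ disjoint from the others; the grope we build for $K':=K(\eta_i;J_i)$ will be an amalgam of pieces localized near the individual $\eta_i$, so it is enough to describe the construction for a single infection curve $\eta$, a single knot $J$ with height-$2$ grope $P$, and a single capped height-$n$ grope $G$ whose body $B$ misses $K$ and whose caps meet $K$.

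\textbf{Step 1: satellite reinterpretation.} Since $G$ gives $\eta\in\pi_1(E(K))^{(n)}\subseteq\pi_1(E(K))^{(1)}$, we have $\lk(\eta,K)=0$. The meridian--longitude swap in the infection gluing identifies $S^3\setminus N(\eta)$ with a tubular neighbourhood $N(J)$ of $J$, carrying $K$ to a winding-number-zero pattern $\widetilde K\subset N(J)$ and $\eta$ to the meridian $\mu_J$; thus $K'$ is the satellite of $J$ with pattern $\widetilde K$. Pushing $\partial B$ onto $\partial N(\eta)=\partial N(J)$ converts $B$ into a grope $\widehat B$ of height $n$ properly embedded in $N(J)$ with $\partial\widehat B=\mu_J$ and $\widehat B$ disjoint from $\widetilde K$, while the caps of $G$ become disks in $N(J)$ meeting $\widetilde K$; after a standard tubing of $G$ we may assume each such disk meets $\widetilde K$ once, so that the top-stage symplectic-basis curves of $\widehat B$ are meridians of $\widetilde K$.

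\textbf{Step 2: stacking the two gropes.} Take $P$ with bottom stage $\Sigma$ ($\partial\Sigma=J$) and second-stage surfaces on its symplectic basis; by boundary-twisting we may assume $\Sigma$ carries the $0$-framing of $J$, so it thickens to $\Sigma\times D^2\hookrightarrow D^4$ with $J\times D^2=N(J)\subset\partial(\Sigma\times D^2)$. Pushing $\widetilde K$, $\widehat B$ and the caps of $G$ off $N(J)$ into $\Sigma\times D^2$, and filling the symplectic basis of $\Sigma$ with the second stage of $P$, produces inside $D^4$ a $2$-complex $\Gamma$ with $\partial\Gamma=K'$ whose lowest two stages come from $\Sigma$ and the second stage of $P$ (this is where the height-$2$ hypothesis on $J$ is used, and where the intersections of the caps with $\widetilde K$ make the boundary equal to $K'$ and not to $K$), and on the appropriate symplectic-basis curves of the second stage --- parallel copies of $\mu_J$, that is, of $\eta$ --- we glue the height-$n$ gropes $\widehat B$. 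Since $\widehat B$ has height $n$, $\Gamma$ is a grope of height $2+n$, and it is embedded because $B$ misses $K$ and $\Sigma\times D^2$ meets the rest of $D^4$ only along $N(J)$. Carrying out the same construction near each $\eta_i$ and joining the resulting pieces gives the required grope of height $n+2$ for $K'$.

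\textbf{The main obstacle.} The skeleton above is short, but the bulk of the work --- and the step I expect to be the real obstacle --- is the geometric bookkeeping in Step 2: verifying that the swept surfaces are embedded with the correct framings (the normal-Euler-number condition on $\Sigma$, the effect of the $0$-framing of $J$ under the infection gluing), that the second-stage surfaces of $P$ can be made disjoint from the pushed-in caps, that the intersections of the caps with $\widetilde K$ indeed yield $\partial\Gamma=K'$, and --- most delicately --- that the symplectic-basis curves produced on the second-stage surfaces really are unknotted parallel copies of $\eta$ with the framing along which $\widehat B$ attaches. This is precisely the verification carried out in \cite[Definition~4.4]{Cha-Kim:2016-1}, which systematizes the earlier constructions of \cite{Cochran-Teichner:2003-1} and \cite{Horn:2010-1}; I would follow that treatment, using the capped-grope hypothesis on the $\eta_i$ to supply the upper $n$ stages and the height-$2$ hypothesis on the $J_i$ to supply the lower two.
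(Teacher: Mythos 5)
There is no proof in the paper to compare against: the statement is quoted with a citation to \cite[Definition~4.4]{Cha-Kim:2016-1}, so your sketch has to stand on its own, and as written it has a genuine gap at its central step. In Step 2 the bottom stage of your complex $\Gamma$ is $\Sigma$, the bottom stage of the height-$2$ grope for $J$, whose boundary is $J$; pushing the pattern $\widetilde K$ and the caps into $\Sigma\times D^2$ does not change that boundary, and the parenthetical claim that ``the intersections of the caps with $\widetilde K$ make the boundary equal to $K'$'' is exactly the point that needs a construction and is never supplied. A symptom that something essential is missing: your argument never uses the cap--$K$ intersections in any quantitative way, so it would apply verbatim when the capped gropes lie in a ball split from $K$; taking the $J_i$ to be unknots there gives $K(\eta_i;J_i)=K$, and the argument would then place an arbitrary knot (e.g.\ one with nonzero Arf invariant) in $\G_{n+2}$, which is false. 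Two subsidiary claims are also unjustified or wrong: you cannot in general arrange ``after a standard tubing'' that each cap meets $\widetilde K$ exactly once (the whole point of Lemma~\ref{lem:axes bounding grope of height n} is that caps may meet $K$ in many points, and the construction must absorb all of them), and the symplectic-basis curves of the second-stage surfaces of an abstract height-$2$ grope for $J$ in $D^4$ are not parallel copies of $\mu_J$, so there is no canonical place to glue the $\widehat B$'s in your set-up.

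The known construction (\cite{Cochran-Teichner:2003-1}, \cite{Horn:2010-1}, \cite{Cha-Kim:2016-1}) stacks in essentially the opposite order from your proposal. The bottom stage of the height-$(n+2)$ grope is a surface bounded by $K(\eta_i;J_i)$ itself --- in practice a Seifert surface of $K$ chosen disjoint from the $\eta_i$ (cf.\ Theorem~\ref{thm:eta_i}), re-embedded by the infection --- together with the data of the seed knot; the capped gropes of height $n$ bounded by the $\eta_i$ supply intermediate stages; and the height-$2$ gropes for the $J_i$ are attached at the \emph{top}, along curves that are parallels of the $\eta_i$ (equivalently meridians of the $J_i$) arising from the intersection points of the caps with $K$, one framed parallel copy of the $J_i$-grope for each such intersection point. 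This is where the cap--$K$ intersections, the framing bookkeeping, and the need for disjoint parallel copies actually enter, and none of it is visible in your Step 2. Your closing deferral to \cite[Definition~4.4]{Cha-Kim:2016-1} is reasonable, but the sketch preceding it is not a summary of that construction; I would redo Step 2 along the lines above rather than try to repair the satellite-of-$\Sigma$ picture.
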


Therefore we need to find axes $\eta_i$ bounding gropes of height $n$. The following lemma shows how to find such axes. 

\begin{lemma}\cite[Lemma~3.9]{Cochran-Teichner:2003-1}\label{lem:axes bounding grope of height n}
Let $K$ be a knot and $\eta_i$, $1\le i\le m$, curves in $S^3\setminus K$ which form an unlink in $S^3$. Suppose $\eta_i\in \pi_1(S^3\setminus K)^{(n)}$. Then, there exist capped gropes $G_i$ of height $n$ disjointly embedded in $S^3$ such that $G_i$ do not meet $K$ except for the caps and for each $i$ the grope $G_i$ is bounded by a knot in the homotopy class of $\eta_i$. 
\end{lemma}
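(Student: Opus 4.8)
The plan is to argue by induction on $n$, assembling each $G_i$ one stage at a time while keeping the whole tower embedded, the $m$ gropes mutually disjoint, and everything disjoint from $K$ except the caps. The engine is the standard geometric interpretation of the derived series: an element $\gamma\in\pi_1(X)^{(n)}$ can be written as a product of commutators $\gamma=\prod_{j=1}^{g}[x_j,y_j]$ with $x_j,y_j\in\pi_1(X)^{(n-1)}$, and this commutator expression is realized by a compact genus-$g$ surface with one boundary component mapped into $X$ so that the boundary represents $\gamma$ and a symplectic basis of curves represents the $x_j$ and~$y_j$. To make such a picture embedded in $X=E(K)\subset S^3$, I would first use general position to replace the curves $x_j^{(i)},y_j^{(i)}$ (over all $i$ and $j$) by pairwise disjoint simple closed curves embedded in $E(K)$; this is automatic, since these are $1$-dimensional objects in a $3$-manifold, and they are automatically disjoint from $K$. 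Inside pairwise disjoint regular neighborhoods I then build the bottom-stage surfaces $\Sigma_i$: the neighborhood of the relevant curves together with connecting arcs is a genus-$2g_i$ handlebody $W_i$, and on $\partial W_i$ there is a simple closed curve cutting off a genus-$g_i$ once-punctured subsurface whose symplectic basis maps to the chosen copies of the $x_j^{(i)}$ and $y_j^{(i)}$ and whose boundary is homotopic in $E(K)$ to $\prod_j[x_j^{(i)},y_j^{(i)}]=\eta_i$; pushing this subsurface slightly into $W_i$ gives the embedded $\Sigma_i$.

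Since the $x_j^{(i)},y_j^{(i)}$ lie in $\pi_1(E(K))^{(n-1)}$ and are realized by disjoint embedded curves in $E(K)$, I can apply the inductive hypothesis to attach embedded capped gropes of height $n-1$ to each of them along these very curves, arranging by general position at each successive stage that all the subgropes are pairwise disjoint, disjoint from the $\Sigma_i$, and disjoint from $K$. Stacking these onto the $\Sigma_i$ produces embedded height-$n$ gropes in $E(K)$, disjoint from $K$ and from one another. At the very top, the symplectic basis curves of the top-stage surfaces represent arbitrary elements of $\pi_1(E(K))^{(0)}=\pi_1(E(K))$; each such curve bounds a disk in $S^3$ because $S^3$ is simply connected, and these disks generally meet $K$ --- these are exactly the caps, the only part of $G_i$ permitted to meet $K$. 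The base case ($n=1$) is handled by the same construction with the recursion vacuous, the bottom surface being the top surface and its symplectic basis being capped directly.

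\textbf{Main obstacle.} The delicate point is the bookkeeping needed to keep the entire tower embedded and the $m$ gropes disjoint: a priori the realizing maps of the stages carry $1$-dimensional double-curve singularities, and one has to verify that the standard fixes --- general position for the $1$-dimensional symplectic-basis curves, cut-and-paste along intersection circles of the surface stages, and isotoping the inductively produced subgropes into disjoint regular neighborhoods of the chosen representative curves --- can be carried out compatibly, without destroying the derived-series conditions on the symplectic bases that allow the next stage to be attached. This is where the hypothesis that the $\eta_i$ form an unlink in $S^3$ is used: the $\eta_i$ bound disjoint disks, and one may exploit this to place the bottom-stage surfaces $\Sigma_i$ in disjoint balls, which makes the inductive separation of the higher stages possible; the freedom to replace each $\eta_i$ by a homotopic knot is what renders harmless the genus-raising that is inherent in every step of the construction.
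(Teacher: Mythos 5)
The paper does not prove this lemma; it cites \cite[Lemma~3.9]{Cochran-Teichner:2003-1} verbatim, so there is no internal proof to compare against. I will therefore assess your argument on its own terms.

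Your overall plan---peel off one height at a time by writing $\eta_i$ as a product of commutators of elements of $\pi_1(E(K))^{(n-1)}$, realize the commutator word by an embedded once-punctured surface with those elements as symplectic basis curves, and recurse---is the right skeleton, and it is the same idea that underlies the Cochran--Teichner construction. However, there are two genuine gaps in the way you have deployed the induction.

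First, the inductive hypothesis you invoke is a statement about curves that \emph{form an unlink in $S^3$}. You only argue that general position makes the symplectic basis curves $x_j^{(i)},y_j^{(i)}$ pairwise disjoint simple closed curves in $E(K)$. Disjointness is automatic in a $3$-manifold, but it is not unlinkedness: two disjoint simple closed curves can be Hopf-linked. Without the unlink condition you have no right to cite the inductive hypothesis. The fix is exactly the freedom your proof keeps gesturing at without pinning down: crossing changes are homotopies, so within each free homotopy class one may always choose representatives forming an unlink, and since the grope boundary is only required to lie in the homotopy class of $\eta_i$ (not to equal $\eta_i$), you can reselect representatives at every stage. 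This is where the ``replace $\eta_i$ by a knot in its homotopy class'' allowance is really being spent---it has to be spent at every level of the tower, not just at the bottom. You attribute the role of the unlink hypothesis on the $\eta_i$ to placing the $\Sigma_i$ in disjoint balls, but the more fundamental point is that the same unlinking freedom must be re-created at each intermediate height so that the induction applies.

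Second, ``arranging by general position at each successive stage that all the subgropes are pairwise disjoint, disjoint from the $\Sigma_i$, and disjoint from $K$'' is not available: the subgropes and the $\Sigma_i$ are $2$-complexes in a $3$-manifold, and $2+2>3$, so generic intersections are $1$-dimensional and do not disappear by perturbation. To actually achieve an embedded, disjoint tower you must build inside disjoint regular neighborhoods and construct the lower-stage surface \emph{after} the higher stages are in place (so that the inductively-produced grope boundaries become the symplectic basis curves of $\Sigma_i$, rather than hoping to attach gropes to a pre-chosen basis), and you must keep each new piece inside a regular neighborhood disjoint from the rest. Your narrative has the order of construction backwards in places (building $\Sigma_i$ first, then attaching gropes ``along these very curves''), and the inductive hypothesis only delivers gropes bounded by knots \emph{homotopic} to the axes, not by the axes themselves, so the compatibility needs to be arranged by choosing $\Sigma_i$ afterwards.

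In short: the decomposition you propose is correct in spirit, but the induction as written is not sound because the unlink hypothesis is not propagated and the disjointness claims are not justified by general position; both are repairable, and repairing them requires exercising the homotopy freedom at every stage and ordering the construction so that lower stages conform to the already-built upper stages.
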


Theorem~\ref{thm:grope of height n+2} and Lemma~\ref{lem:axes bounding grope of height n} yield the following corollary immediately.

\begin{corollary}\label{cor:knot bounding a grope of height n+2}
Let $K$ be a knot and let $\eta_i$, $1\le i\le m$, be curves in $S^3\setminus K$ which form an unlink in $S^3$. Suppose $\eta_i\in \pi_1(S^3\setminus K)^{(n)}$. For each $i$ with $1\le i\le m$, suppose $J_i$ is a knot bounding a grope of height 2 in $D^4$. Then we can homotope $\eta_i$ such that the knot $K(\eta_i;J_i)$ bounds a grope of height $n+2$ in $D^4$. 
\end{corollary}

We discuss further how to choose $\eta_i$ and $J_i$. Namely, to prove Theorem~\ref{thm:main} we will need to construct infinitely many knots bounding a grope of height $n+2$ in $D^4$ which are linearly independent modulo $\F_{n.5}$. For that purpose, we need to make specific choices for $\eta_i$ and $J_i$. 

The following theorem is a generalization of \cite[Theorem 5.13]{Cochran-Kim:2004-1}, and in Section~\ref{sec:infinite rank subgroup} it will be used for the choice of axes $\eta_i$ in the construction of the generating knots of the subgroups in Theorem~\ref{thm:main}.

\begin{theorem}\label{thm:eta_i} Let $n\ge 1$ be an integer. Let $K$ be a knot with nontrivial Alexander polynomial $\Delta_K(t)$. Suppose $\deg \Delta_K(t) > 2$ if $n> 1$. Let $\Sigma$ be a Seifert surface for $K$. Then there exists an unlink $\{\eta_1,\ldots, \eta_m\}$ in $S^3$ which does not meet $\Sigma$ and satisfies the following:
	\begin{enumerate}
		\item For all $i$, $\eta_i\in \pi_1M(K)^{(n)}$ and the $\eta_i$ bound capped gropes of height $n$ which are disjointly embedded in $S^3\setminus K$. Here, the caps are allowed to intersect $K$. 
		\item Let $\cP=(R_0,R_1,\ldots, R_n)$ where $R_i=\Q$ for $0\le i\le n-1$ and $R_n=\Z_p$ where $p$ is a prime greater than the top coffeicient of $\Delta_K(t)$. Then, for each $n$-cylinder $W$ with $M(K)$ as one of its boundary components, there exists some $\eta_i$ such that $j_*(\eta_i)\notin \cP^{n+1}\pi_1W$ where $j_*\colon \pi_1M(K)\to \pi_1W$ is the inclusion-induced homomorphism. 
	\end{enumerate}   
\end{theorem}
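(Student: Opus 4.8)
The plan is to build the axes $\eta_i$ from a generating set for a suitable piece of the Alexander module, following the strategy of \cite[Theorem 5.13]{Cochran-Kim:2004-1}, and to establish part (2) by a dimension-counting argument against the mixed-coefficient commutator series, using the generalized notion of $R$-algebraic $n$-solution and the Blanchfield-form nontriviality theorem promised in the introduction. First I would reduce to the case where the $\eta_i$ are pushed into $\pi_1 M(K)^{(n)}$: start with curves representing a generating set of $H_1(M(K);\Q[t^{\pm1}])$ (equivalently of the rational Alexander module), realized as disjoint simple closed curves on the complement of $\Sigma$, and then iterate an infection/commutator argument $n-1$ times to push them into the $n$-th derived subgroup, invoking Lemma~\ref{lem:axes bounding grope of height n} to ensure at the end that they bound disjointly embedded capped gropes of height $n$ in $S^3\setminus K$ with caps allowed to hit $K$. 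This gives part (1). The hypothesis $\deg\Delta_K(t)>2$ for $n>1$ enters exactly here: it guarantees the rational Alexander module has enough rank that, after pushing down $n-1$ levels, the resulting curves still surject onto a nontrivial quotient — this is the content of Friedl--Teichner's obstruction showing the degree bound is sharp.

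For part (2), let $W$ be an $n$-cylinder with $M(K)$ a boundary component, set $\pi=\pi_1 W$, and let $\cP$ be the stated sequence $(\Q,\ldots,\Q,\Z_p)$. Suppose for contradiction that $j_*(\eta_i)\in\cP^{n+1}\pi$ for every $i$. The key structural input is that an $n$-cylinder restricts, via Definition~\ref{def:algebraic n-solution}, to an $R$-algebraic $n$-solution for the relevant $R\in\{\Q,\Z_p\}$; concretely, the $n$-Lagrangian/$n$-dual data forces the successive quotients $\cP^k\pi/\cP^{k+1}\pi$ (for $k\le n$) to be controlled — each injects into $H_1(\pi;R_k[\pi/\cP^k\pi])$, which in turn is governed by the image of $H_1(M(K);R_k[\,\cdot\,])$ under $j_*$, and the rank of that image is bounded by the rank of the corresponding piece of $H_1(M(K))$ with the appropriate coefficients. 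Iterating this down from $k=n$ to $k=0$, the assumption that all $j_*(\eta_i)$ die in $\cP^{n+1}\pi$ would mean that the $\eta_i$ already generate a quotient of $H_1(M(K);\Z_p[\pi/\cP^n\pi])$ that is ``too small'': applying Theorem~\ref{thm:nontrivial} (the higher-order Blanchfield-linking-form nontriviality statement), one of the $\eta_i$ must survive to a nonzero element of $H_1(\pi;\Z_p[\pi/\cP^n\pi])$, hence $j_*(\eta_i)\notin\cP^{n+1}\pi$, a contradiction. The choice of $p$ larger than the leading coefficient of $\Delta_K(t)$ is what makes the reduction mod $p$ of the Alexander module behave like the rational one (no collapse of rank), so that the $R$-algebraic $n$-solution machinery and Theorem~\ref{thm:nontrivial} apply with $R=\Z_p$ at the top level.

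I expect the main obstacle to be the bookkeeping in the inductive rank estimate: one must verify that passing from $\Q$-coefficients at levels $0,\dots,n-1$ to $\Z_p$-coefficients at level $n$ does not introduce extra homology (this is where the prime must be chosen carefully, and where a lemma in the spirit of \cite[Lemma~3.14]{Cha:2010-1} — already used in the proof of Proposition~\ref{prop:implication of cylinder} — is needed to match $\Z$- and $\Z_p$-ranks), and that the definition of $R$-algebraic $n$-solution is genuinely extracted from an arbitrary $n$-cylinder, not just from a closed $n$-solution. The second subtlety is ensuring that ``capped gropes whose caps may meet $K$'' is exactly the flexibility Lemma~\ref{lem:axes bounding grope of height n} delivers after the homotopy in part (1); this is routine but must be stated precisely so that Theorem~\ref{thm:grope of height n+2} can later be applied to produce the height $n+2$ gropes. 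The remainder — existence of the disjoint unlink missing $\Sigma$, and the commutator push-down — follows the template of \cite{Cochran-Kim:2004-1, Cochran-Teichner:2003-1} essentially verbatim.
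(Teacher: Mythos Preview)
Your broad strategy is right --- $R$-algebraic $n$-solutions together with Theorem~\ref{thm:nontrivial} are the correct ingredients --- but there is a genuine gap in your construction of the $\eta_i$. The crucial difficulty, which you do not address, is \emph{uniformity}: the set $\{\eta_1,\ldots,\eta_m\}$ must be finite and chosen \emph{before} seeing $W$, yet must generate $H_1(M(K);\KK[t^{\pm1}])$ as a $\KK$-module for the $W$-dependent skew field $\KK=\KK_p(G_{n-1})$ (where $G=\pi_1W^{(1)}$), for \emph{every} $n$-cylinder $W$. Your description ``start with generators of the rational Alexander module, then iterate a commutator argument $n-1$ times'' produces one specific collection of curves in $\pi_1M(K)^{(n)}$; nothing in that construction guarantees that this fixed collection generates the higher-order module for coefficient systems that change with~$W$. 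The paper handles this with Theorem~\ref{thm:special tuple}: one builds a finite \emph{collection} $\cP_{n-1}$ of $(2g{-}1)$-tuples in $F^{(n-1)}$ by enumerating \emph{all} admissible iterated commutator patterns (not just one), proves via Lemma~\ref{lem:good tuple} that for any map $r\colon S\to G$ that is an $R$-algebraic $(n-1)$-solution simultaneously for $R=\Q$ and $R=\Z_p$, \emph{some} tuple in $\cP_{n-1}$ maps to a $\KK(G_{n-1})$-generating set of $H_1(S;\KK(G_{n-1}))$, and then takes the $\eta_i$ to be the images under $i\colon F\to S$ of \emph{all} elements of \emph{all} tuples in $\cP_{n-1}$. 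This combinatorial step is the technical heart of the theorem and is absent from your proposal.

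Your logic for part~(2) is also inverted. There is no ``iterating down from $k=n$ to $k=0$'': the argument is direct at a single level. Proposition~\ref{prop:n-cylinder and n-solution} shows that the inclusion-induced $j\colon S=\pi_1M(K)^{(1)}\to G=\pi_1W^{(1)}$ is an $R$-algebraic $n$-solution (hence $(n-1)$-solution) for both $R=\Q$ and $R=\Z_p$; Theorem~\ref{thm:special tuple} then produces a special tuple whose images --- a subset of the $\eta_i$ --- generate $H_1(M(K);\KK[t^{\pm1}])$; Theorem~\ref{thm:nontrivial} (this is where $\deg\Delta_K>2$ when $n>1$ enters, and where $p$ larger than the top coefficient is used so that $\rank_{\Z_p}H_1(M_\infty;\Z_p)=\deg\Delta_K$) says the map to $H_1(W;\KK[t^{\pm1}])$ is nonzero; hence some $\eta_i$ has nonzero image there, which forces $j_*(\eta_i)\notin\cP^{n+1}\pi_1W$ because $\cP^n\pi_1W/\cP^{n+1}\pi_1W$ injects into $H_1(W;\Z_p[\pi_1W/\cP^n\pi_1W])$. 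The degree hypothesis is consumed by Theorem~\ref{thm:nontrivial}, not by the commutator push-down.
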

\noindent The above theorem is the most technical part of the paper, and its proof is postponed to the end of  Subsection~\ref{subsec:algebraic n-solution}. Theorem~\ref{thm:eta_i} is significant since it shows that there is a finite set of $\eta_i$ in $S^3\setminus K$ (in fact, in $S^3\setminus \Sigma$) which satisfies the nontriviality property in Theorem~\ref{thm:eta_i}~(2) for {\it every $n$-cylinder} for $M(K)$: if one needs to find a finite set of $\eta_i$ satisfying the nontriviality property for {\it a specific choice of an $n$-cylinder}, it can be more easily done (see Theorem~\ref{thm:nontrivial}.)
\begin{figure}[H]
	\begin{tikzpicture}[x=1bp,y=1bp]
	\small
	\node [anchor=south west, inner sep=0mm] {\includegraphics[scale=1]{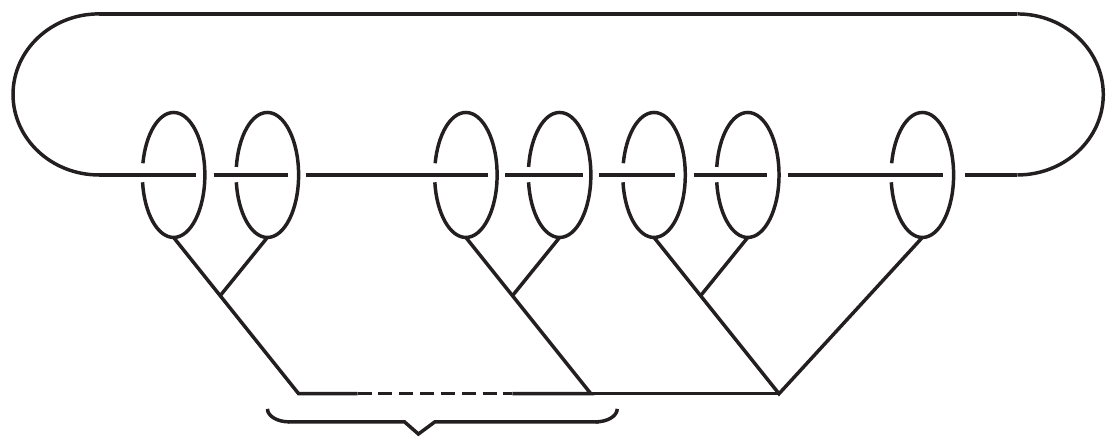}};
	\node [left] at (5,100) {$U$};
	\node [left] at (60,45) {$T$};
	\node [below] at (120,0) {$m$};
	\end{tikzpicture}
	\caption{A clasper surgery description of the knot $P_m$}
	\label{figure:Knot_P_m}
\end{figure}

We will need the following lemma for the choice of infection knots $J_i$ when we show linear independence of knots $K(\eta_1,\ldots, \eta_m;J_i)$, $i\ge 1$, in the proof of Theorem~\ref{thm:main}. 

\begin{lemma}\label{lemma:J_0^i}\cite[Proposition 3.4]{Jang:2015-1}
	For an arbitrary constant $C$, there exists a sequence of knots $J_1, J_2, \ldots,$ and an increasing sequence of odd primes $p_1,p_2,\ldots, $ which satisfy the following: let $\omega_i := e^{2\pi\sqrt{-1}/p_i}$. 
	\begin{enumerate}
		\item Each $J_i$ bounds a grope of height 2 in $D^4$, 
		\item $\sum_{r=0}^{p_i-1}\sigma_{J_i}(\omega_i^r) > C$,
		\item $\sum_{r=0}^{p_j-1}\sigma_{J_i}(\omega_j^r) =0$ for $j<i$.
	\end{enumerate}	
\end{lemma}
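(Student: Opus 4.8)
\textbf{Proof proposal for Lemma~\ref{lemma:J_0^i}.}

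The plan is to build the knots $J_i$ one at a time, choosing the prime $p_i$ after $J_i$ has been fixed but before $J_{i+1}$ is constructed, so that conditions (2) and (3) become statements that can be arranged essentially independently. The starting point is a single building-block knot whose Levine--Tristram signature function is not identically zero; concretely one can take the knot $P_m$ of Figure~\ref{figure:Knot_P_m} (obtained by clasper/infection surgery on a genus-one knot), since such a knot bounds a grope of height $2$ in $D^4$ --- it is obtained by infecting a ribbon knot along curves lying deep in the derived series by knots bounding gropes of height $1$, so Corollary~\ref{cor:knot bounding a grope of height n+2} (with $n=1$) applies --- and by choosing the clasper framing $m$ large one makes $\sigma_{J}(\omega)$ as negative as one wishes on an arc of $S^1$ near $1$ of definite length. (Here I am matching the sign convention so that ``$>C$'' in (2) and ``$=0$'' in (3) come out right; if $\sigma$ is negative one replaces the knot by its mirror or negative, which does not affect grope height.)

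Next I would run the following induction. Suppose $J_1,\dots,J_{i-1}$ and odd primes $p_1<\dots<p_{i-1}$ have been chosen. Because $\sigma_{J_j}$ is continuous except at finitely many jump points and vanishes at $\omega=1$ to the appropriate order, the Riemann-sum-type quantity $\sum_{r=0}^{p-1}\sigma_{J_j}(e^{2\pi r\sqrt{-1}/p})$ is, up to a bounded error independent of $p$, equal to $p$ times $\int_{S^1}\sigma_{J_j}(\omega)\,d\omega$ --- but the point is that I do not need to fix the $J_j$ for $j<i$ first: instead, having fixed $J_j$, I will choose $p_j$ (at step $j$) large enough that the integral controls the sum. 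Concretely, to get (3) for the \emph{later} knots, I note that for \emph{any} knot $J$ one has $\sigma_{J}(e^{2\pi r\sqrt{-1}/p})=0$ for all $r$ provided $p$ does not divide the order of any root of $\Delta_J(t)$ on the unit circle and, more simply, provided the jump points of $\sigma_J$ avoid the $p$th roots of unity; since there are only finitely many jump points of $\sigma_{J_1},\dots,\sigma_{J_{i-1}}$, this excludes only finitely many primes, so I may first \emph{restrict} to primes $p$ avoiding all those jump points. Wait --- that alone does not give (3), since $\sigma_{J_i}$ evaluated at $p_j$th roots of unity need not vanish merely because the root is not a jump point.

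So the actual mechanism for (3) is the following, and this is where I would be careful: I construct $J_i$ as an infection $K_0(\gamma_1,\dots,\gamma_k; T_i)$ on a fixed slice (indeed ribbon) seed knot $K_0$, along curves $\gamma_\ell$ lying in the second derived subgroup of $\pi_1(S^3\setminus K_0)$, using an infection knot $T_i$ whose signature function $\sigma_{T_i}$ is supported (up to finitely many jump points) away from all the $p_j$th roots of unity for $j<i$; then by the standard additivity of Levine--Tristram signatures under infection along nullhomologous curves, $\sigma_{J_i}(\omega)=\sigma_{K_0}(\omega)+k\cdot\sigma_{T_i}(\omega)=k\cdot\sigma_{T_i}(\omega)$ (as $K_0$ is algebraically slice), so $\sum_{r=0}^{p_j-1}\sigma_{J_i}(\omega_j^r)=k\sum_{r=0}^{p_j-1}\sigma_{T_i}(\omega_j^r)$, which I arrange to be $0$ by choosing the support of $\sigma_{T_i}$ (controlled by the Seifert form of $T_i$, hence by an explicit band-connect-sum of torus knots and their mirrors) to meet no $p_j$th root of unity for $j<i$ --- possible because those are finitely many points on $S^1$. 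Having fixed $J_i$ this way, with $\sigma_{J_i}\not\equiv 0$ and $\sigma_{J_i}(1)=0$, I then choose $p_i$ large (and avoiding the finitely many jump points of $\sigma_{J_1},\dots,\sigma_{J_i}$, and $p_i>p_{i-1}$) so that the Riemann-sum estimate gives $\sum_{r=0}^{p_i-1}\sigma_{J_i}(\omega_i^r)>p_i\cdot\big(\tfrac12\int_{S^1}\sigma_{J_i}\big)-(\text{bounded})>C$; replacing $C$ by a suitable constant absorbs the bounded error. That $J_i$ bounds a grope of height $2$ is Corollary~\ref{cor:knot bounding a grope of height n+2} with $n=1$, the $\gamma_\ell$ being chosen in $\pi_1(S^3\setminus K_0)^{(1)}$ (in fact deeper) bounding capped gropes of height $1$, and $T_i$ --- an ordinary Seifert-genus knot --- bounding a grope of height $2$ in $D^4$ after being replaced by $T_i\#(-T_i)$ if necessary, or more simply one works with $T_i$ directly using that every knot bounds a grope of height $1$ and the infection knot is itself pushed into $D^4$; I would quote \cite{Jang:2015-1} for the precise bookkeeping.

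\textbf{Main obstacle.} The delicate point is decoupling (2) and (3): I must pick the primes $p_i$ \emph{after} the $J_i$ (to force the sum to track the integral and beat $C$), yet also guarantee the \emph{future} vanishing $\sum_{r}\sigma_{J_i}(\omega_j^r)=0$ for all $j<i$ --- i.e.\ when building $J_i$ I already know $p_1,\dots,p_{i-1}$ but must make $\sigma_{J_i}$ ignore those finitely many roots of unity while keeping $\int\sigma_{J_i}$ bounded away from $0$. The infection construction above resolves this because it gives me a knob (the Seifert form / band sum defining $T_i$) that controls the \emph{support} of $\sigma_{J_i}$ precisely, so I can steer the signature function away from a prescribed finite set of points on $S^1$ while keeping its integral large, all without disturbing grope height. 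Everything else --- the Riemann-sum asymptotics, additivity of signatures under infection, and the grope-height bookkeeping --- is routine and available in the cited references.
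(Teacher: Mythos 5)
Your proposal contains a genuine error at the crucial step: the claimed ``additivity'' $\sigma_{J_i}(\omega)=\sigma_{K_0}(\omega)+k\cdot\sigma_{T_i}(\omega)$ for an infection $J_i=K_0(\gamma_1,\dots,\gamma_k;T_i)$ along curves $\gamma_\ell$ in the second derived subgroup of $\pi_1(S^3\setminus K_0)$ is false. The Litherland satellite formula gives $\sigma_{K_0(\gamma;T)}(\omega)=\sigma_{K_0}(\omega)+\sigma_T(\omega^{w})$ where $w=\lk(\gamma,K_0)$; since your $\gamma_\ell$ lie in $\pi_1(S^3\setminus K_0)^{(2)}$ they are nullhomologous in the exterior, so $w=0$ and $\sigma_T(\omega^{0})=\sigma_T(1)=0$. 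In other words, infection along curves that are nullhomologous in the exterior leaves the Seifert form, hence the Levine--Tristram signature function, completely unchanged. As your $K_0$ is slice, this would force $\sigma_{J_i}\equiv 0$ for \emph{all} of your $J_i$, killing condition~(2). This is precisely the tension the grope-height constraint imposes: the $\gamma_\ell$ must be deep in the derived series to raise the grope height, and any such $\gamma_\ell$ has zero linking with the seed, so the classical signature cannot be fed through an infection along those curves.

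The actual proof cited in the paper (Jang's Proposition~3.4, building on Cochran--Teichner and Horn) sidesteps this by using the specific family $P_m$ --- knots obtained from the unknot by a particular clasper/grope-cobordism surgery, not by infection along deep curves in a Seifert surface --- which bound gropes of height~$2$ yet have \emph{nontrivial} Levine--Tristram signature concentrated in an arc near $1\in S^1$ whose length shrinks controllably as $m\to\infty$. One then takes $J_i=\#^N\bigl(P_{m_{i+1}}\#(-P_{m_i})\bigr)$ with $N>C/2$ and interleaves the sequences $m_1<m_2<\cdots$ and $p_1<p_2<\cdots$ so that: the jump set of $\sigma_{P_{m_i}}$ and $\sigma_{P_{m_{i+1}}}$ is trapped between $\omega_j^{-1}$ and $\omega_j$ for all $j<i$ (giving~(3) by cancellation of the two summands), while the nonzero $p_i$th roots of unity land where $\sigma_{P_{m_{i+1}}}$ and $\sigma_{P_{m_i}}$ disagree enough for~(2). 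Your Riemann-sum heuristic is also off target for the same family: the paper's $J_i$ need not have $\int_{S^1}\sigma_{J_i}\ne 0$, and the sum in (2) is kept bounded below by a count of roots caught in the ``disagreement window,'' not by a $p_i\cdot\text{(mean value)}$ estimate. Your broad intuition --- steer the support of $\sigma_{J_i}$ away from the earlier roots of unity while keeping it big somewhere --- is the right picture, but the lever you pulled (infection along deep curves) exerts no force on the signature function at all; one genuinely needs the clasper-surgered knots $P_m$ or an analogous device that changes the classical Seifert form while preserving grope height.
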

In \cite[Propositoin~3.4]{Jang:2015-1}, the knots $J_i$ were constructed to satisfy more conditions such as $\int_{S^1} \sigma_{J_i}(\omega)\,d\omega=0$ to obtain the knots $K(\eta_1,\ldots, \eta_m;J_i)$ for which the $\rhot$-invariants associated to a PTFA representation vanish. We do not need this property in this paper, and for our purpose we can use simpler $J_i$. That is, the proof of \cite[Propositoin~3.4]{Jang:2015-1} tells us that in Lemma~\ref{lemma:J_0^i}, for each $i$ we can take $J_i$ to be a connected sum of $N$ copies of $P_{m_{i+1}}\# (-P_{m_i})$ where $N$ is an integer bigger than $C/2$, $P_m$ is a knot whose clasper surgery description is given in Figure~\ref{figure:Knot_P_m}, and $m_1,m_2,\ldots,$ is a certain increasing sequence of positive integers. In Figure~\ref{figure:Knot_P_m}, $P_m$ is obtained from the unknot $U$ by performing clasper surgery along the tree $T$. The knot $P_m$ for $m=1$ was given in \cite[Figure~3.7]{Cochran-Teichner:2003-1}, and $P_m$ for $m>1$ were given in \cite{Horn:2010-1}. The surgery descriptions of $P_m$ are given in \cite[Figure~3.6]{Cochran-Teichner:2003-1} and \cite[Figure~3]{Horn:2010-1}.

\section{Infinite rank subgroups of filtrations}\label{sec:infinite rank subgroup}
In this section, we prove Theorem~\ref{thm:main},  which will be a direct consequence of Theorems~\ref{thm:refined main theorem-1} and \ref{thm:refined_main_theorem-2}. First, we construct the generating knots of the desired subgroups in Theorem~\ref{thm:main}.
Let $n> 1$. Let $K$ be a slice knot with $\deg \Delta_K(t)>2$. Let $\{\eta_1,\ldots \eta_m\}$ be an unlink in $S^3$ lying in $E(K)$ given by Theorem~\ref{thm:eta_i}. By Corollary~\ref{cor:knot bounding a grope of height n+2}, we can homotope $\eta_i$ such that $K(\eta_i;J)$ bounds a grope of height $n+2$ in $D^4$ for every knot $J$ which bounds a grope of height 2 in $D^4$. By Lemma~\ref{lem:universal bound}, there exists a constant $C$ such that $|\rhot(M(K),\phi)|<C$ for all homomorphisms $\phi\colon \pi_1M(K)\to G$ for every group $G$. Choose a sequence of knots $J_1, J_2, \ldots ,$ using Lemma~\ref{lemma:J_0^i} with this constant $C$ such that the prime $p_1$ is greater than the top coefficient of $\Delta_K(t)$. Finally, for each $i\ge 1$ define $K_i:=K(\eta_1,\ldots, \eta_m;J_i)$. (Therefore, the choice of $\eta_1,\ldots,\eta_m$ is independent of $K_i$.)

\subsection{Linear independence of examples}\label{subset:linear independence}

In this subsection, in Theorem~\ref{thm:refined main theorem-1} we show that the knots $K_i$ defined as above satisfy Theorems~\ref{thm:main}(1) and (2). To prove Theorem~\ref{thm:refined main theorem-1}, we will need the following lemma. 
Recall that for a 4-manifold $W$ with a homomorphism $\phi\colon \pi_1W\to G$ , we let $S_G(W):=\lsign(W)-\sign(W)$. 
\begin{lemma}\label{lem:building block} 
Let $K$ be a slice knot. Let $\{\eta_1.\eta_2,\ldots,\eta_m\}$ be an unlink in $E(K)$ such that $\eta_i \in \pi_1E(K)^{(n)}$ for all $1\le i\le m$. Let $M:=M(K(\eta_i;J_i))$. Suppose $J_i$ is a knot with vanishing Arf invariant for $1\le i\le m$. Then, letting $\Z_\infty:=\Z$, we have the following.
	\begin{enumerate}
	\item There exists an $n$-solution $W$ for $M$  satisfying the following: suppose $\phi\colon \pi_1W\to G$ is a homomorphism where $G$ is an amenable group lying in Strebel's class $D(R)$ for some ring $R$. Then, $S_G(W)=\sum_{i=1}^m\rhot(M(J_i), \phi_i)$ where for each $i$ the map $\phi_i\colon \pi_1M(J_i)\to \Z_{d_i}$ is a surjective homomorphism sending the meridian of $J_i$ to $1\in \Z_{d_i}$ with $d_i$ the order of $\phi(\eta_i)$ in $G$. 
	\item There exists an $n$-cylinder $V$ with $\partial V=M(K)\coprod (-M)$ satisfying the following: suppose $\phi\colon \pi_1V\to G$ is a homomorphism where $G$ is an amenable group lying in Strebel's class $D(R)$ for some ring $R$. Then, $S_G(V)=-\sum_{i=1}^m\rhot(M(J_i), \phi_i)$ where for each $i$ the map $\phi_i\colon \pi_1M(J_i)\to \Z_{d_i}$ is a surjective homomorphism sending the meridian of $J_i$ to $1\in \Z_{d_i}$ with $d_i$ the order of $\phi(\eta_i)$ in $G$. 

	\end{enumerate}

\end{lemma}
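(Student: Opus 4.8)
The plan is to build the desired $n$-solution $W$ and $n$-cylinder $V$ by the standard ``mapping cylinder'' construction for infections, and then to compute the $L^2$-signature defect by an excision/Novikov-additivity argument, reducing everything to the infection knots $J_i$. First I would recall that since $K$ is slice it bounds a slice disk $D$ in $D^4$; let $\Delta$ denote the exterior of $D$, so that $\partial \Delta = M(K)$ and $\Delta$ is an $n$-solution (indeed an $\infty$-solution, or at least $H_*$-small enough) for $M(K)$. Similarly, since each $J_i$ has vanishing Arf invariant, $J_i \in \F_0$, so $M(J_i)$ bounds a $0$-solution $V_i$; moreover because we will only feed $V_i$ into an abelian representation $\phi_i\colon \pi_1 M(J_i)\to \Z_{d_i}$, the relevant piece of data is just $S_{\Z_{d_i}}(V_i) = \rhot(M(J_i),\phi_i)$ by definition of the $\rhot$-invariant (independence of the bounding $4$-manifold).

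For part (1): the zero-surgery $M = M(K(\eta_i;J_i))$ is obtained from $M(K)$ by removing tubular neighborhoods $N(\eta_i)$ and regluing $E(J_i)$. Correspondingly I would form $W$ by gluing $\Delta$ to $\bigsqcup_i \big(E(J_i)\times[0,1]\big)$ along the tori $\partial N(\eta_i) \subset M(K) = \partial\Delta$, identified with $\partial E(J_i)\times\{0\}$ via the infection gluing, leaving $\bigsqcup_i E(J_i)\times\{1\}$ and the reglued copy of $M(K)$ together forming $\partial W = M$. One checks $W$ is spin (both pieces are, and the gluing tori carry trivial normal data). The key homological point is that $\eta_i \in \pi_1 E(K)^{(n)} \subseteq \pi_1 W^{(n)}$, so each added handle/piece contributes to $H_2(W;\Z[\pi/\pi^{(n)}])$ only ``below level $n$'': the $n$-Lagrangian and $n$-dual of $W$ are inherited from those of $\Delta$ together with the obviously dual pair coming from each $E(J_i)\times[0,1]$ (the ``surface times interval'' contributes a hyperbolic summand whose Lagrangian half dies in $\Z[\pi/\pi^{(n)}]$ because the relevant classes are supported on $\eta_i$-neighborhoods, hence lie in $\pi^{(n)}$). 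This is essentially the computation in \cite[Section 4]{Cochran-Orr-Teichner:2002-1} and \cite[Section 2]{Cochran-Kim:2004-1}; I would cite those rather than redo it. For the signature defect: $\sign(W)=0$ by Novikov additivity since $\sign(\Delta)=0$ (a slice exterior) and $\sign(E(J_i)\times I)=0$; and $\lsign_G(W)$ likewise splits. The piece $\Delta$ contributes $\lsign_G(\Delta)=0$ because $\phi$ restricted to $\pi_1\Delta$ extends over the $4$-manifold $\Delta$ with $H_2(\Delta;\N G)$ controllable — more precisely $S_G(\Delta) = \rhot(M(K),\phi|_{M(K)})$ and this vanishes since $M(K)$ bounds the slice exterior (or one argues $\phi|_{\partial N(\eta_i)}$ factors appropriately). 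Each $E(J_i)\times I$ contributes $S_G = \rhot(M(J_i),\phi_i)$ where $\phi_i$ is the induced abelian representation on $\pi_1 M(J_i)$: the image of $\pi_1 E(J_i)$ in $G$ is generated by $\phi(\eta_i)$ (the longitude of $J_i$ maps to $\mu_{\eta_i}$ which bounds in $E(K)$ away from $\Sigma$, hence dies, while the meridian of $J_i$ maps to $\phi(\eta_i)$), so it is cyclic of order $d_i = \mathrm{ord}(\phi(\eta_i))$, giving exactly $\phi_i\colon\pi_1 M(J_i)\to\Z_{d_i}$ sending the meridian to $1$. Summing, $S_G(W) = \sum_i \rhot(M(J_i),\phi_i)$.

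For part (2): I would take $V$ to be the analogous cobordism rel boundary, namely $V = \big(M(K)\times[0,1]\big) \,\cup\, \bigsqcup_i \big(E(J_i)\times[0,1]\big)$, where the $E(J_i)\times I$ are glued along $N(\eta_i)\times\{1\}\subset M(K)\times\{1\}$ via the infection identification (so $E(J_i)\times I$ here plays the role of a ``$2$-handle-like'' collar implementing the surgery); then $\partial V = M(K)\times\{0\} \sqcup (-M)$, with $-M$ the reglued side. The same homology bookkeeping shows $V$ is an $n$-cylinder (it is a ``product plus lower-order junk''), and the sign convention on the $-M$ boundary flips the orientation of each $E(J_i)\times I$ piece, so Novikov additivity gives $S_G(V) = -\sum_i \rhot(M(J_i),\phi_i)$ with the same induced $\phi_i$.

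\textbf{Main obstacle.} The genuinely delicate point is verifying that $W$ (resp.\ $V$) is an \emph{$n$-solution} (resp.\ \emph{$n$-cylinder}) in the precise sense of Definition~\ref{def:n-solution} (resp.\ Definition~\ref{def:n-cylinder}) — i.e.\ producing the $n$-Lagrangian, its $n$-dual, and (should one want a sharper statement) the half-lift to level $n+1$ — while simultaneously pinning down the \emph{exact} abelian representation $\phi_i$ induced on each $\pi_1 M(J_i)$, including the claim that it sends the meridian of $J_i$ precisely to $1\in\Z_{d_i}$ and has image of order exactly $d_i = \mathrm{ord}(\phi(\eta_i))$. Getting the order right requires knowing that $\phi(\eta_i)$ and the image of the meridian of $J_i$ literally coincide under the infection gluing, and that no further collapsing occurs; this is where one leans on the fact that $\eta_i$ lies deep in the derived series and on the naturality of the intersection form under the inclusions of the pieces. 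Once these structural verifications are in place, the signature computation itself is a routine application of Novikov additivity together with the defining property of $\rhot$ as an $L^2$-signature defect, as carried out in \cite{Cha:2010-1} and \cite{Cochran-Orr-Teichner:2002-1}.
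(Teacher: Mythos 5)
Your high-level plan is the right one—build a $4$-manifold from the slice disk exterior (resp.\ $M(K)\times[0,1]$) glued to pieces implementing the infections, then compute the signature defect piecewise—but the pieces you attach are wrong, and this is not a cosmetic slip.

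You glue $\Delta$ (resp.\ $M(K)\times[0,1]$) to $\coprod_i E(J_i)\times[0,1]$. This does not produce a $4$-manifold with the required boundary: the gluing of two compact oriented $4$-manifolds must be along codimension-zero pieces of their $3$-manifold boundaries, and $\partial\bigl(E(J_i)\times[0,1]\bigr)= E(J_i)\times\{0\}\cup E(J_i)\times\{1\}\cup \partial E(J_i)\times[0,1]$ contains no solid torus, so it cannot be identified with $N(\eta_i)\subset M(K)$. Even if one repairs the gluing, the resulting boundary would carry leftover copies of (pieces of) $M(J_i)$, so one would not have $\partial W=M$ (resp.\ $\partial V=M(K)\sqcup(-M)$) as the lemma requires. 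Most seriously, a product $E(J_i)\times I$ has vanishing ordinary and $L^2$-signatures, so $S_G\bigl(E(J_i)\times I\bigr)=0$; it simply cannot contribute the $\rhot(M(J_i),\phi_i)$ summand you attribute to it, and it contributes nothing to $H_2$ either, so you would have no $n$-Lagrangian/$n$-dual pairs from those pieces with which to certify $W$ as an $n$-solution (or $V$ as an $n$-cylinder).

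What must be glued in is a $0$-solution $W_i$ for each $J_i$, attached by identifying the surgery solid torus $M(J_i)\setminus E(J_i)\subset\partial W_i$ with $N(\eta_i)$. This is exactly what the paper does for Part~(2) (and what Cha's Proposition~4.4, cited for Part~(1), does using the slice disk exterior): after surgery one may assume $\pi_1 W_i\cong\Z$ generated by the meridian of $J_i$, so the restriction of $\phi$ to $\pi_1 M(J_i)$ factors through $\Z$ and hence through $\Z_{d_i}$ with $d_i=\mathrm{ord}(\phi(\eta_i))$; the $0$-Lagrangians and $0$-duals of the $W_i$ supply, via Mayer--Vietoris, the $n$-Lagrangian and $n$-dual for $W$ or $V$; and the computation $S_G(V)=\rhot(M(K),\phi_K)-\rhot(M,\phi_M)=-\sum_i\rhot(M(J_i),\phi_i)$ is not naive Novikov additivity across the solid-torus gluings but the CHL cobordism identity (as in Lemma~2.3 of Cochran--Harvey--Leidy), which is what makes the $\rhot(M(J_i),\phi_i)$ terms appear through the $W_i$. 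You do correctly observe in your opening paragraph that $J_i\in\F_0$ provides $0$-solutions and correctly identify the induced abelian representation on $\pi_1 M(J_i)$, but you then discard the $0$-solutions in the actual construction; reinstating them (in place of the $E(J_i)\times I$ collars) and replacing the naive Novikov step by the CHL cobordism argument would bring your proof in line with the paper's.
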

\begin{proof}
Part~(1) is essentially due to \cite[Proposition 4.4]{Cha:2010-1} and its proof, noticing that
\[
\rhot(M,\phi) = \rhot(M(K), \phi) + \sum_{i=1}^m\rhot(M(J_i), \phi_i)
\]
where, by abuse of notation, $\phi$ also denotes the restriction of the map $\phi\colon \pi_1W\to G$ to the corresponding subspace. 

We prove Part~(2). Since each $J_i$ has vanishing Arf invariant, it is $0$-solvable. Let $W_i$ be a $0$-solution for $J_i$. By doing surgery along $\pi_1W_i^{(1)}$ if necessary, we may assume that $\pi_1W_i\cong \Z$, generated by the meridian of $J_i$. 
	
	Let $V=M(K)\times [0,1]\cup (\coprod_{i=1}^m-W_i)$ where each $-W_i$ is attached to $M(K)\times [0,1]$ by identifying the solid torus $M(J_i)-E(J_i)\subset \partial W_i$ with the tubular neighborhood of $\eta_i\times 0\subset M(K)\times 0$ in such a way that the 0-linking longitude of $\eta_i$ is identified with the meridian of $J_i$ and the meridian of $\eta_i$ is identified with the 0-linking longitude of $J_i$. Then $\partial V=M(K)\amalg (-M)$ and an inclusion from a boundary component $M(K)$ or $M$ to $V$ induces an isomorphism on first homology. Using Mayer-Vietoris sequences, one can also show that $0$-Lagrangians and $0$-duals of $W_i$ give rise to an $n$-Lagrangian and its $n$-dual of $V$. This shows that $V$ is an $n$-cylinder. 
	
As in \cite[Lemma 2.3]{Cochran-Harvey-Leidy:2009-1}, one can obtain that 
	\[
	\rhot(M(K),\phi_K) - \rhot(M,\phi_M) = -\sum_{i=1}^m \rhot(M(J_i),\phi_i)
	\]
	where $\phi_K$, $\phi_M$, and $\phi_i$ are the restrictions of $\phi $ to the corresponding subspaces. By the definition of $\rhot$-invariants, we have $\rhot(M(K),\phi_K) - \rhot(M,\phi_M) =  S_G(V)$. Note that since $\phi_i$ factors through $\pi_1W_i\cong\Z$, the image of $\phi_i$ is the abelian subgroup in $G$ generated by the image of the meridian of $J_i$. Since the meridian of $J_i$ is identified with the 0-linking longitude of $\eta_i$ in $M(K)$, by the subgroup property of $\rhot$-invariants (see\cite[p.108]{Cochran-Orr-Teichner:2002-1}) we may assume that the map $\phi_i\colon \pi_1M(J_i)\to \Z_{d_i}$ is a surjective homomorphism sending the meridian of $J_i$ to $1\in \Z_{d_i}$ with $d_i$ the order of $\phi(\eta_i)$ in $G$. 
\end{proof}

\begin{theorem}\label{thm:refined main theorem-1} Let $n\ge 2$ be an integer and let $K$ be a slice knot with $\deg \Delta_K(t) > 2$. Let $K_i$ be the knots obtained from $K$ as in the beginning of Section~\ref{sec:infinite rank subgroup}. Then, $K_i\in \G_{n+2}$ for all $i$, and $K_i$ are linearly independent modulo $\F_{n.5}$. Moreover, letting $G:=\pi_1(S^3\setminus K)$ and $G_i:=\pi_1(S^3\setminus K_i)$, we can construct $K_i$ such that for each $i$ there is an  isomorphism $G_i/G_i^{(n+1)}\to G/G^{(n+1)}$ preserving peripheral structures.
\end{theorem}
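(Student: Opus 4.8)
The plan is to prove the three assertions separately, the only substantial one being linear independence. That $K_i\in\G_{n+2}$ is built into the construction: each $\eta_j$ lies in $\pi_1(S^3\setminus K)^{(n)}$ and each $J_i$ bounds a grope of height $2$ in $D^4$ by Lemma~\ref{lemma:J_0^i}(1), so Corollary~\ref{cor:knot bounding a grope of height n+2} applies to the $\eta_j$ used to define $K_i$. For the isomorphism $G_i/G_i^{(n+1)}\to G/G^{(n+1)}$ preserving peripheral structure I would invoke the standard behaviour of infection along curves in the $n$-th derived subgroup (cf.~\cite{Cochran-Orr-Teichner:2002-1,Cochran-Kim:2004-1}): the collapse map $S^3\setminus K_i\to S^3\setminus K$, which is the identity outside the infection regions and sends each infected copy of $E(J_i)$ onto a tubular neighbourhood of $\eta_j$ by a degree-one map, is $\pi_1$-surjective, becomes an isomorphism modulo the $(n+1)$-st derived subgroup, and is the identity near the boundary torus, hence carries the meridian and longitude of $K_i$ to those of $K$. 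Since the homotopies of the $\eta_j$ performed through Corollary~\ref{cor:knot bounding a grope of height n+2} keep them in $\pi_1(S^3\setminus K)^{(n)}$, this applies verbatim.

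For linear independence I would argue by contradiction: assume a finite connected sum $J=\#_{i}a_{i}K_{i}$ with not all $a_i=0$ lies in $\F_{n.5}$, witnessed by an $n.5$-solution $V$. Let $i_0$ be the \emph{smallest} index with $a_{i_0}\neq0$, set $p:=p_{i_0}$ — an odd prime exceeding the top coefficient of $\Delta_K(t)$, since $p_1$ was so chosen and the $p_i$ increase — and put $\cP=(R_0,\dots,R_n)$ with $R_0=\dots=R_{n-1}=\Q$ and $R_n=\Z_p$. I would then assemble a $4$-manifold $W$: glue $V$ along $M(J)$ to the connected-sum cobordism $C$ from $M(J)$ to a disjoint union of suitably oriented copies $M(K_{i(l)})^{(l)}$, $1\le l\le L:=\sum_i|a_i|$; to each of these glue a copy of the $n$-cylinder $V_{i(l)}$ of Lemma~\ref{lem:building block}(2), whose other boundary component is a copy $M(K)^{(l)}$ of $M(K)$; then, after fixing an index $l_0$ with $i(l_0)=i_0$, cap off every boundary component $M(K)^{(l)}$ with $l\neq l_0$ by a copy $Y_l$ of the exterior of a slice disk for $K$. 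A Mayer--Vietoris argument patterned on the proof of Lemma~\ref{lem:building block}(2) should show $W$ is an $n$-cylinder with $\partial W=M(K)^{(l_0)}$ and $H_1(W)\cong\Z$. Setting $\Gamma:=\pi_1W/\cP^{n+1}\pi_1W$ and letting $\Phi\colon\pi_1W\to\Gamma$ be the projection, Lemma~\ref{lem:amenable and D(R)} makes $\Gamma$ amenable and contained in $D(\Z_p)$ with $\Gamma^{(n+1)}=\{e\}$, and every meridian occurring in $W$ has infinite order in $\Gamma$ because it survives in $\pi_1W/\cP^1\pi_1W=H_1(W;\Q)\cong\Q$.

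Next I would compute $S_\Gamma(W)$ in two ways. Novikov additivity for the ordinary and the $L^2$-signature gives
\begin{multline*}
\rhot\bigl(M(K),\Phi|_{M(K)^{(l_0)}}\bigr)=S_\Gamma(W)\\
=S_\Gamma(V)+S_\Gamma(C)+\sum_{l}\epsilon_l\,S_\Gamma(V_{i(l)})+\sum_{l\neq l_0}\epsilon_l\,S_\Gamma(Y_l),
\end{multline*}
with $\epsilon_l=\pm1$ recording orientations. Here $S_\Gamma(V)=\rhot(M(J),\Phi|_{M(J)})=0$ and each $S_\Gamma(Y_l)=\rhot(M(K),\Phi|)=0$ by Amenable Signature Theorem~\ref{thm:obstruction}, since $V$ and the $Y_l$ are $n.5$-solutions for the relevant knots, the relevant meridians have infinite order in $\Gamma$, and $\Gamma$ is amenable in $D(\Z_p)$ with $\Gamma^{(n+1)}=\{e\}$; and $S_\Gamma(C)=0$ because $C$ has trivial intersection form. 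By Lemma~\ref{lem:building block}(2), $S_\Gamma(V_{i(l)})=-\sum_{j=1}^{m}\rhot(M(J_{i(l)}),\phi_{l,j})$, where $\phi_{l,j}\colon\pi_1M(J_{i(l)})\to\Z_{d_{l,j}}$ is the abelian representation sending the meridian to $1$ and $d_{l,j}$ is the order of $\Phi(\eta_j^{(l)})$ in $\Gamma$. Since $\eta_j^{(l)}\in(\pi_1W)^{(n)}\subseteq\cP^n\pi_1W$, its image in $\Gamma$ lies in $\cP^n\pi_1W/\cP^{n+1}\pi_1W$, which injects into the $\Z_p$-vector space $H_1(\pi_1W;\Z_p[\pi_1W/\cP^n\pi_1W])$, so $d_{l,j}\in\{1,p\}$. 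Combining Lemma~\ref{lem:computation of rho-invariant} with Lemma~\ref{lemma:J_0^i}(2),(3): the invariant $\rhot(M(J_{i(l)}),\phi_{l,j})$ is $0$ when $d_{l,j}=1$ (it equals $\sigma_{J_{i(l)}}(1)=0$) and when $d_{l,j}=p$ with $i(l)>i_0$ (it equals $\sum_{r=0}^{p-1}\sigma_{J_{i(l)}}(e^{2\pi r\sqrt{-1}/p})=0$, using $i_0<i(l)$), while it equals $\Sigma_0:=\sum_{r=0}^{p-1}\sigma_{J_{i_0}}(e^{2\pi r\sqrt{-1}/p})>C$ when $d_{l,j}=p$ and $i(l)=i_0$, where $C$ is the universal bound of Lemma~\ref{lem:universal bound} for $M(K)$ fixed in the construction. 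As all the $K_{i_0}$-summands of $J$ carry the same orientation sign, the right-hand side of the display collapses to $-\sign(a_{i_0})\,\Sigma_0\,N_0$ with $N_0:=\#\{(l,j):i(l)=i_0,\ d_{l,j}=p\}\ge0$.

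The contradiction then falls out: $|\rhot(M(K),\Phi|_{M(K)^{(l_0)}})|<C$ together with $\Sigma_0>C$ forces $N_0=0$, so $\Phi(\eta_j^{(l_0)})=e$, i.e.\ $j_*(\eta_j^{(l_0)})\in\cP^{n+1}\pi_1W$, for every $j$; but $W$ is an $n$-cylinder with $M(K)^{(l_0)}$ among its boundary components, so Theorem~\ref{thm:eta_i}(2) — applicable because $p$ exceeds the top coefficient of $\Delta_K(t)$ — produces some $j$ with $j_*(\eta_j^{(l_0)})\notin\cP^{n+1}\pi_1W$, a contradiction. Hence $J\notin\F_{n.5}$. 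I expect the crux to be the assembly of $W$ and the verification — a Mayer--Vietoris argument in the style of the proof of Lemma~\ref{lem:building block}(2) — that it is an honest $n$-cylinder with connected $\Z$-homology boundary, since this is exactly what lets Amenable Signature Theorem~\ref{thm:obstruction} and Theorem~\ref{thm:eta_i}(2) be applied to one and the same representation $\Phi$; once that is in place, the order-$\{1,p\}$ dichotomy, the evaluation of the abelian $\rhot$-invariants, and the numerical estimate are routine given the quoted results, and the deep ingredient Theorem~\ref{thm:eta_i} is proved separately.
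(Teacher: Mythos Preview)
Your argument is correct and follows the paper's strategy closely: assemble a $4$-manifold $W$ with $\partial W=M(K)$, take the quotient map to $\Gamma=\pi_1W/\cP^{n+1}\pi_1W$ with the same $\cP$, expand $S_\Gamma(W)$ by Novikov additivity, kill the contribution of the $n.5$-solution via Theorem~\ref{thm:obstruction}, reduce the remaining abelian $\rhot$-terms to $\{0,\Sigma_0\}$ via the $\Z_p$-torsion dichotomy and Lemma~\ref{lemma:J_0^i}, and contradict the Cheeger--Gromov bound using Theorem~\ref{thm:eta_i}.

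The one genuine architectural difference is in how you build $W$. The paper attaches a \emph{single} copy of the cylinder $V_1$ from Lemma~\ref{lem:building block}(2) to one $M(K_1)$-boundary of the connected-sum cobordism and caps every other $M(K_i)$-boundary directly with the $n$-solutions $W_i$ of Lemma~\ref{lem:building block}(1); the contradiction is then obtained in one stroke from $S_G(W)\le -\Sigma_0$. You instead attach a cylinder $V_{i(l)}$ to \emph{every} summand, producing many $M(K)$-boundaries, and cap all but one with slice-disk exteriors $Y_l$, using Theorem~\ref{thm:obstruction} once more to make these contributions vanish; the contradiction is then reached indirectly by first forcing $N_0=0$ and then invoking Theorem~\ref{thm:eta_i}. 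Since each $W_i$ in the paper is (up to a collar) precisely $V_i$ glued to a slice-disk exterior, the two $4$-manifolds are in fact the same, and the two computations are reorderings of one another. Your version has the mild advantage of not needing Lemma~\ref{lem:building block}(1) separately and of treating all summands uniformly; the paper's version is slightly shorter because it applies Theorem~\ref{thm:eta_i} up front rather than at the end.
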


\begin{proof}
	First, we prove the last part. Since there is a degree 1 map from $E(J_i)$ to $E(\mbox{unknot})$, for each $i$, there is a degree 1 map $f_i\colon E(K_i) \to E(K)$ relative to the boundary such that $f_i$ is the identity outside the copies of $E(J_i)$. Since $\eta_\ell\in \pi_1M(K)^{(n)}$ for all $\ell$, by \cite[Theorem~8.1]{Cochran:2002-1} the $f_i$ induces an isomorphism $G_i/G_i^{(n+1)}\to G/G^{(n+1)}$ which preserves peripheral structures. 
	
The $K_i$ bound a grope of height $n+2$ in $D^4$ by Corollary~\ref{cor:knot bounding a grope of height n+2} and Lemma~\ref{lemma:J_0^i}, hence $K_i\in \cG_{n+2}$. Let $J:=\#_ia_iK_i$ where $a_i\in \Z$, a nontrivial connected sum of finitely many copies of $K_i$ and their inverses $-K_i$. To prove the theorem it suffices to show that $J$ is not $n.5$-solvable. 
	
	Suppose $J$ is $n.5$-solvable. We may assume $a_1\ne 0$, and furthermore, by taking the inverse of $K_1$ if necessary, we may assume $a_1>0$. Note that the $J_i$ have vanishing Arf invariant since they bound a grope of height 2 in $D^4$ and hence $0$-solvable. We construct building blocks for a certain 4-manifold as follows.
	\begin{enumerate}
		\item Let $V$ be an $n.5$-solution for $M(J)$. 
		\item Let $E$ be the standard cobordism between $M(J)$ and $\amalg_ia_iM(K_i)$ as constructed (with the name $C$) in \cite[p.113]{Cochran-Orr-Teichner:2002-1}. We may assume $\partial E = (\amalg_ia_iM(K_i))\amalg (-M(J))$.
		\item Let $V_1$ be an $n$-cylinder with $\partial V=M(K) \amalg(- M(K_1))$ as given in Lemma~\ref{lem:building block}~(2).
		\item Let $W_i$ be an $n$-solution for $M(K_i)$ as given in Lemma~\ref{lem:building block}~(1).
	\end{enumerate}

	Let $b_1:=a_1-1$ and $b_i=|a_i|$ for $i\ge 2$. For $1\le r\le b_i$, let $W_i^r$ be a copy of $-\epsilon W_i$ where $\epsilon_i=a_i/|a_i|$. Now we define
	\[
	W=V\bigcup_{\partial_-E} E \bigcup_{\partial_+ E} \left(V_1\coprod\left(\coprod_{i}\coprod_{r=1}^{b_i}W_i^r\right)\right)
	\]
	where $\partial_+E = \coprod_ia_iM(K_i)$ and $\partial_-E=M(J)$. See Figure~\ref{figure:Cobordism_W}. Note that $\partial W=M(K)$. Using Mayer-Vietoris sequences, one can easily show that the $n$-Lagrangians and $n$-duals of $V_1$, $W_i^r$, and $V$ form an $n$-Lagrangian and its $n$-dual for $W$, and therefore $W$ is an $n$-solution for $M(K)$.
	
	\begin{figure}[H]
		\begin{tikzpicture}[x=1bp,y=1bp]
		\small
		\node [anchor=south west, inner sep=0mm] {\includegraphics[scale=0.5]{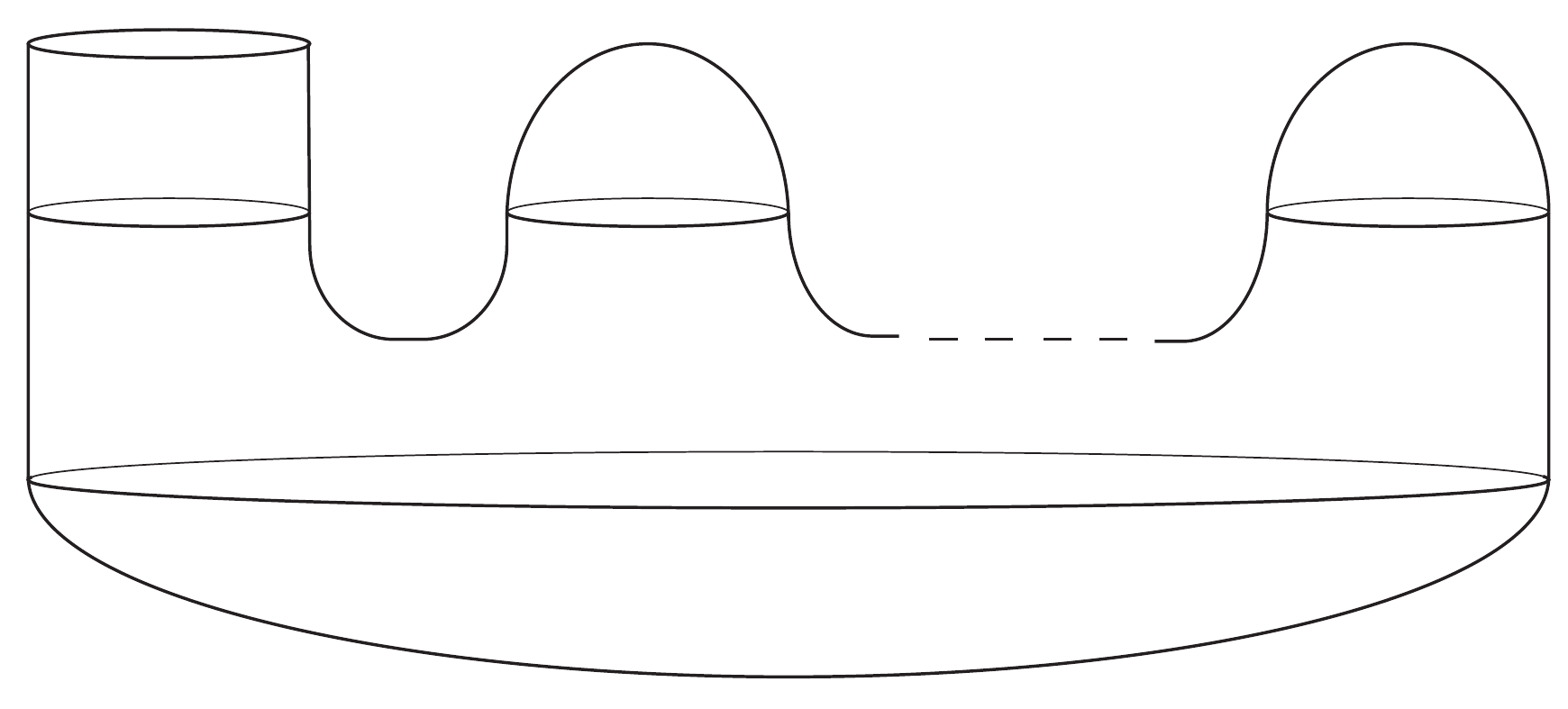}};
		\node at (130,20) {$V$};
		\node at (130,50) {$E$};
		\node at (27,90) {$V_1$};
		\node at (104,90) {$W_i^r$};
		\node at (225,90) {$W_i^r$};
		\node [left] at (5,35) {$M(J)$};
		\node [left] at (5,79) {$M(K_1)$};
		\node [left] at (5,105) {$M(K)$};
		\node [right] at (126,79) {$M(K_i)$};
		\node [right] at (248,79) {$M(K_i)$};
		\end{tikzpicture}
		\caption{Cobordism $W$}
		\label{figure:Cobordism_W}
	\end{figure}

	Let $\cP:=(R_0,R_1,\ldots, R_n)$ where $R_i=\Q$ for $i\le n-1$ and $R_n=\Z_{p_1}$. Using Definition~\ref{def:commutator series} we obtain $\cP^j\pi_1 W$ for $1\le j\le n+1$, which are subgroups of $\pi_1W$. Let $G:=\pi_1W/\cP^{n+1}\pi_1W$, which is amenable and lies in $D(\Z_{p_1})$ by Lemma~\ref{lem:amenable and D(R)},  and let $\phi\colon \pi_1W\to G$ be the quotient map. For convenience, we denote a restriction of $\phi$ by $\phi$ as well. 
	
	Since $\partial W=M(K)$, we have $S_G(W)=\rhot(M(K),\phi)$. On the other hand, by Novikov additivity, we have
	\[
	S_G(W)=S_G(V) + S_G(E) + S_G(V_1) + \sum_i\sum_{r=1}^{b_i}S_G(W_i^r).
	\]
	
	Here, $S_G(V)=0$ by Amenable Signature Theorem~\ref{thm:obstruction}. 
	
	Also, $S_G(E)=0$ since $\Coker\{H_2(\partial_-E)\to H_2(E)\}=0$ (see \cite[Lemma~4.2]{Cochran-Orr-Teichner:2002-1}\cite[Lemma~2.4]{Cochran-Harvey-Leidy:2009-1}). 
	
	By Lemma~\ref{lem:building block}(2), 
	\[
	S_G(V_1)=-\sum_{i=1}^m\rhot(M(J_1), \phi_i)
	\] 
	where $\phi_i\colon \pi_1M(J_1)\to \Z_{d_i}$ is a surjective homomorphism such that $d_i$ is the order of $\phi(\eta_i)$ in $G$ and $\phi_i$ sends the meridian of $J_1$, which is the 0-linking longitude of $\eta_i$, to $1\in \Z_{d_i}$.  Since $\eta_i\in \pi_1M(K)^{(n)}$, one can see that $\phi(\eta_i)$ lies in $\cP^n\pi_1W/\cP^{n+1}\pi_1W$. Since $\cP^n\pi_1W/\cP^{n+1}\pi_1W$ injects into $H_1(\pi_1W;\Z_{p_1}[\pi_1W/\cP^n\pi_1W])$, which is a $\Z_{p_1}$-vector space, we have $d_i=1\mbox{ or } p_1$. Then, by Lemma~\ref{lem:computation of rho-invariant}, $\rhot(M(J_1),\phi_i)=0$ if $d_i=1$, and $\rhot(M(J_1),\phi_i)=\sum_{r=0}^{p_1-1}\sigma_{J_1}(e^{2\pi r\sqrt{-1}/p_1})$ if $d_i=p_1$. Furthermore, since the $\eta_\ell$ $(1\le \ell\le m)$ were chosen using Theorem~\ref{thm:eta_i}, there exists some $\eta_i$ such that $\phi(\eta_i)\ne e$ in $G$. Therefore $d_i=p_1$ for some $i$ and we have 
	\[
	S_G(V_1) \le -\sum_{r=0}^{p_1-1}\sigma_{J_1}(e^{2\pi r\sqrt{-1}/p_1}).
	\]
	
	We compute $S_G(W_i^r)$. By Lemma~\ref{lem:building block}(1), we have 
	\[
	S_G(W_i)= \sum_{j=1}^m\rhot(M(J_i), \phi_j).
	\] 
	When $i=1$, $W_1^r=-W_1$ and hence 
	\[
	S_G(W_1^r)= - \sum_{r=0}^{p_1-1}\sigma_{J_1}(e^{2\pi r\sqrt{-1}/p_1}) \mbox{ or } 0,
	\]
	computed as above. When $i\ge 2$, similarly to the case $i=1$, we have 
	\[
	S_G(W_i^r)= \pm\sum_{r=0}^{p_1-1}\sigma_{J_i}(e^{2\pi r\sqrt{-1}/p_1}) \mbox{ or } 0.
	\] 
	But by Lemma~\ref{lemma:J_0^i}(3), we have $S_G(W_i^r)=0$ for $i\ge 2$.
	
	Summing up the above computations, we obtain that 
	\[
	S_G(W) \le -\sum_{r=0}^{p_1-1}\sigma_{J_1}(e^{2\pi r\sqrt{-1}/p_1})
	\]
	 and therefore we have $|S_G(W)|> C$ by our choice of $J_1$ and Lemma~\ref{lemma:J_0^i}(2). But from $S_G(W)=\rhot(M(K),\phi)$ and our choice of $C$, we have $|S_G(W)|< C$, which is a contradiction. 
\end{proof}

\subsection{Distinction from the knots constructed via iterated doubling operators}\label{subsec:distinction}

The purpose of this subsection is to prove Theorem~\ref{thm:refined_main_theorem-2} below. We note that in \cite{Horn:2010-1} it was shown that a nontrivial combination of the knots generating the infinite rank subgroup of $\cG_{n+2}/\cG_{n+2.5}$ in \cite{Horn:2010-1} is nontrivial modulo $\F_{n.5}$. That is, the infinite rank subgroup of $\cG_{n+2}/\cG_{n+2.5}$ in \cite{Horn:2010-1} injects into $\F_n/\F_{n.5}$ under the quotient map $\cC/\cG_{n+2.5}\to \cC/\F_{n.5}$.
\begin{theorem}\label{thm:refined_main_theorem-2}
Let $n\ge 2$ be an integer and let $K_i$ be the knots in Theorem~\ref{thm:refined main theorem-1}. In $\F_n/\F_{n.5}$, the infinite rank subgroup generated by the $K_i$ trivially intersects the infinite rank subgroups of $\F_n/\F_{n.5}$ in \cite{Cochran-Harvey-Leidy:2009-1}, \cite{Horn:2010-1}, \cite{Cochran-Harvey-Leidy:2009-2}, and \cite{Cha:2010-1}. In particular, in $\cG_{n+2}/\cG_{n+2.5}$, the infinite rank subgroup  generated by $K_i$ trivially intersects the infinite rank subgroup of $\cG_{n+2}/\cG_{n+2.5}$ in \cite{Horn:2010-1}.
\end{theorem}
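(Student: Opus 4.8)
The plan is to introduce a $p$-local coarsening of the solvable filtration and to show that the two families of knots lie in complementary parts of it. Fix a prime $p$. Following the localization technique of \cite[Section~9]{Cochran-Harvey-Leidy:2009-2}, for a group $G$ one defines a normal series $\{G^{(k)}_{cot,p}\}$ by replacing, at each inductive step, the group ring $\Z[G/G^{(k)}_{cot,p}]$ by its localization at the multiplicative set of elements that remain invertible after reduction mod $p$; this is a $p$-local analogue of the rational derived series, and $G^{(k)}\subset G^{(k)}_{cot,p}$ for every $k$. A knot $K$ is then called \emph{$(n,p)$-solvable} if $M(K)$ bounds an $n$-solution $W$ whose $n$-Lagrangian moreover lifts, with vanishing self-intersection, to $H_2(W;\Z[\pi_1W/(\pi_1W)^{(n+1)}_{cot,p}])$ when $n$ is a half-integer (and an $(n,p)$-solution is just an $n$-solution when $n$ is an integer); let $\fpn$ denote the set of $(n,p)$-solvable knots. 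The first task is bookkeeping, all of it formal and modeled on the treatment of $\{\F_n\}$ in \cite{Cochran-Orr-Teichner:1999-1} and of localized series in \cite{Cochran-Harvey-Leidy:2009-1,Cochran-Harvey-Leidy:2009-2}: $\fpn$ is a subgroup of $\cC$, invariant under concordance, with $\F_n\subset\fpn$ (an $n$-solution works, since $(\pi_1W)^{(n+1)}\subset(\pi_1W)^{(n+1)}_{cot,p}$) and $\F_{n.5}\subset\fpnh\subset\F_n$; in particular $\fpnh$ is closed under connected sum and contains every slice knot.

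Next I would prove that no nontrivial integral linear combination $J=\bigconnsum_i a_iK_i$ of the knots of Theorem~\ref{thm:refined main theorem-1} is $(n.5,p_{i_0})$-solvable, where $i_0$ is the least index with $a_{i_0}\ne 0$ (and, after possibly replacing $K_{i_0}$ by its inverse, $a_{i_0}>0$). This is the proof of Theorem~\ref{thm:refined main theorem-1} run again with the index $1$ replaced by $i_0$: from a hypothetical $(n.5,p_{i_0})$-solution $V$ for $M(J)$ one builds, exactly as there, the $4$-manifold $W$ with $\partial W=M(K)$ out of $V$, the standard cobordism $E$, the $n$-cylinder $V_1$ of Lemma~\ref{lem:building block}(2), and copies of the $n$-solutions of Lemma~\ref{lem:building block}(1), so that $W$ is an $n$-solution for $M(K)$; one takes $\cP=(\Q,\dots,\Q,\Z_{p_{i_0}})$ (with $p_{i_0}\ge p_1$ exceeding the top coefficient of $\Delta_K(t)$) and $G=\pi_1W/\cP^{n+1}\pi_1W$, which is amenable and in $D(\Z_{p_{i_0}})$ by Lemma~\ref{lem:amenable and D(R)}. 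The only new input is that $V$ is merely an $(n.5,p_{i_0})$-solution and not an $n.5$-solution; but Cha's proof of Theorem~\ref{thm:obstruction} uses the top Lagrangian only through its vanishing over a quotient that factors through $\Z_{p_{i_0}}$-coefficients, so it applies verbatim and still gives $S_G(V)=0$, and since $V$ is in particular an $n$-cylinder, Theorem~\ref{thm:eta_i}(2) still provides some $\eta_i$ with $\phi(\eta_i)\ne e$. The remaining terms $S_G(E)=0$, $S_G(V_1)$, and $\sum_i\sum_r S_G(W_i^r)$ are computed verbatim (all indices $i>i_0$ contributing $0$ by Lemma~\ref{lemma:J_0^i}(3)), yielding $|S_G(W)|>C\ge|\rhot(M(K),\phi)|=|S_G(W)|$, a contradiction; hence $J\notin\fpnh$.

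Then I would show that every knot concordant to an integral linear combination of the knots of \cite{Horn:2010-1,Cochran-Harvey-Leidy:2009-1,Cochran-Harvey-Leidy:2009-2,Cha:2010-1} belongs to $\fpnh$ for \emph{every} prime $p$. All of those knots are built from slice (indeed ribbon) seed knots by iterated (generalized) doubling operators, each stage infecting along a curve $\eta$ that generates a cyclic higher-order Alexander module of the seed after localization. For such a knot the zero-surgery bounds the union of the slice-disk exterior of the seed with low-order solutions of the infection knots glued along the $\eta$'s, and because each such $\eta$ dies in the $p$-local higher-order twisted homology of that exterior, this $4$-manifold is an $(n.5,p)$-solution. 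This is carried out in \cite[Section~9]{Cochran-Harvey-Leidy:2009-2} for the knots of \cite{Cochran-Harvey-Leidy:2009-1,Cochran-Harvey-Leidy:2009-2}, and the same reasoning applies to the knots of \cite{Horn:2010-1} and \cite{Cha:2010-1}, which are produced by the same kind of iterated doubling operators; closure of $\fpnh$ under connected sum and concordance then extends the conclusion to every knot concordant to a linear combination of them.

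Finally I would assemble the pieces. Suppose a class $z\in\F_n/\F_{n.5}$ lies both in the subgroup generated by the $K_i$ and in one of the subgroups of \cite{Horn:2010-1,Cochran-Harvey-Leidy:2009-1,Cochran-Harvey-Leidy:2009-2,Cha:2010-1}. Then there exist a combination $J=\bigconnsum_i a_iK_i$ and a knot $J'$ concordant to a combination of the corresponding generators with $J\#(-J')$ $n.5$-solvable, hence $(n.5,p)$-solvable for all $p$; since $J'\in\fpnh$ for all $p$ and $\fpnh$ is a subgroup, $J=(J\#(-J'))\#J'\in\fpnh$ for all $p$, and by the second step this forces every $a_i=0$, so $z=0$. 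This proves the statement in $\F_n/\F_{n.5}$; the statement in $\cG_{n+2}/\cG_{n+2.5}$ follows because $\cG_{n+2}\subset\F_n$, $\cG_{n+2.5}\subset\F_{n.5}$, and both the subgroup generated by Horn's knots (as recalled before the theorem) and the one generated by the $K_i$ (by Theorem~\ref{thm:refined main theorem-1}) inject into $\F_n/\F_{n.5}$, so triviality of their intersection there forces triviality upstairs. I expect the third step to be the main obstacle: one must verify, uniformly and for \emph{every} prime $p$, that four rather different iterated-doubling-operator constructions all produce $(n.5,p)$-solutions, i.e. that the infecting curves always become null-homologous in the $p$-localized higher-order homology of the seed's slice-disk exterior. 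By comparison the second and last steps are a re-packaging of Theorem~\ref{thm:refined main theorem-1} and formal algebra, respectively, and the first step, though it requires care with the non-functoriality of localized derived series, is routine.
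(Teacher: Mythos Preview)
Your overall strategy matches the paper's, but there is a genuine gap in the first and last steps: you assert that $\fpnh$ is a subgroup of $\cC$, closed under connected sum and concordance, and your final assembly step relies on this (``since $J'\in\fpnh$ for all $p$ and $\fpnh$ is a subgroup, $J=(J\#(-J'))\#J'\in\fpnh$''). The paper states explicitly that the series $G^{(k)}_{cot,p}$ is \emph{not} functorial, and for this reason $\fpn$ does \emph{not} descend to a subgroup of $\cC$; so this closure argument is not available. The paper avoids the problem entirely: rather than combining separate $(n.5,p)$-solutions, it takes the knot $L$ concordant to $\#_i a_iJ_i$, forms a \emph{single} $4$-manifold $W$ from the concordance exterior, the standard cobordism, and carefully chosen $n$-solutions $W_i$ for each $J_i$, and then proves directly that the meridian $\mu_0$ of the innermost infection knot (which generates $\pi_1$ of the piece carrying the Lagrangian) lies in $(\pi_1W)^{(n+1)}_{cot,p}$ by an Alexander-polynomial argument. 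This is exactly the step you flagged as the ``main obstacle'', and it cannot be replaced by a formal subgroup argument.

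There is also a mismatch in your second step. You keep $G=\pi_1W/\cP^{n+1}\pi_1W$ and say Cha's proof of Theorem~\ref{thm:obstruction} ``applies verbatim'' to give $S_G(V)=0$. But the $(n+1,p)$-Lagrangian of $V$ vanishes only over $\Z[\pi_1V/(\pi_1V)^{(n+1)}_{cot,p_1}]$, and since $\cP^{n+1}\pi_1W\subset (\pi_1W)^{(n+1)}_{cot,p_1}$ (not the other way around), the restriction $\pi_1V\to G$ need not kill $(\pi_1V)^{(n+1)}_{cot,p_1}$. The paper therefore changes the target group to $G=\pi_1W/(\pi_1W)^{(n+1)}_{cot,p_1}$, checks separately that this $G$ is amenable and in $D(\Z_{p_1})$, and invokes the tailored vanishing result (Theorem~\ref{thm:vanishing rho-invariant-(n,p)}) rather than Theorem~\ref{thm:obstruction}. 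This change of $G$ in turn forces extra work: one must re-verify that some $\eta_i$ survives in the \emph{smaller} quotient $\cP^n\pi_1W/(\pi_1W)^{(n+1)}_{cot,p_1}$, which the paper does by observing that the proof of Theorem~\ref{thm:eta_i} actually produces an $\eta_i$ nontrivial in $H_1(W;\KK[t^{\pm 1}])$, a strictly stronger statement than $\eta_i\notin\cP^{n+1}\pi_1W$.
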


We give a proof at the end of this subsection. The proof of the above theorem is based on ideas in \cite[Section 9]{Cochran-Harvey-Leidy:2009-2}. Using the terms in \cite[Section 9]{Cochran-Harvey-Leidy:2009-2}, our knots $K_i$ can be considered as {\it generalized COT knots}, and on the other hand the knots in \cite{Cochran-Harvey-Leidy:2009-1}, \cite{Horn:2010-1}, \cite{Cochran-Harvey-Leidy:2009-2}, and \cite{Cha:2010-1} are called {\it CHL knots}. Therefore, to prove Theorem~\ref{thm:refined_main_theorem-2}, we need to extend the results in \cite[Section 9]{Cochran-Harvey-Leidy:2009-2}, which are about distinguishing COT knots from CHL knots, to the case of generalized COT knots. We will do this by showing the following: 1) For each $n>0$ and a prime $p$, we define a subset $\fpn$ of the set of (isotopy classes of) knots in $S^3$; 2) We show that if $K$ is a nontrivial linear combination of the $K_i$ in Theorem~\ref{thm:refined main theorem-1} which generate an infinite rank subgroup in $\F_n/\F_{n.5}$ (and in $\cG_{n+2}/\cG_{n+2.5}$), we have $K\notin \fpnh$ for some prime $p$; 3) We show that if a knot $K$ is concordant to a nontrivial linear combination of CHL knots, then $K\in \fpnh$ for all prime $p$.

First, for each $n> 0$, we define a subset $\fpn$, which generalizes the subset $\F^{cot}_n$ defined in \cite{Cochran-Harvey-Leidy:2009-2}. Fix an integer $n\ge 0$ and a prime $p$. Let $G$ be a group such that $H_1(G)\cong \Z$. Since $G^{(1)}/G^{(n)}_r$ is a PTFA group \cite[Proposition~2.1]{Harvey:2006-1},  $\Z_p[G^{(1)}/G^{(n)}_r]$ embeds into its skew quotient field, say $\KK$ (see Lemma~\ref{lem:quotient field}). Since $H_1(G)=G/G^{(1)}\cong \Z$, the group ring $\Z_p[G/G^{(n)}_r]$ can be embedded into the noncommutative Laurent polynomial ring $\KK[t^{\pm 1}]$, which is a noncommutative PID. Now we define 

\[
G^{(n+1)}_{cot,p} := \Ker\left\{ G^{(n)}_r\to \frac{G^{(n)}_r}{[G^{(n)}_r,G^{(n)}_r]}\to  \frac{G^{(n)}_r}{[G^{(n)}_r,G^{(n)}_r]}\otimesover{\Z[G/G^{(n)}_r]} \KK[t^{\pm 1}] \right\}.
\]
Note that $G^{(n+1)}\subset G^{(n+1)}_{cot,p}$ for all prime $p$ and 
\[
\frac{G^{(n)}_r}{[G^{(n)}_r,G^{(n)}_r]}\otimesover{\Z[G/G^{(n)}_r]} \KK[t^{\pm 1}] \cong H_1(G;\KK[t^{\pm 1}])
\]
since $\KK[t^{\pm 1}]$ is a flat left module over $\Z[G/G^{(n)}_r]$ by \cite[Proposition II.3.5]{Stenstrom:1975}.

For a 4-manifold $W$ with $H_1(W)\cong \Z$, let $\pi:=\pi_1W$. For each $n>0$ and a prime $p$, there is the equivariant intersection form 
\[
\lambda_n^p\colon H_2(W;\Z[\pi/\pi^{(n)}_{cot,p}]) \times H_2(W;\Z[\pi/\pi^{(n)}_{cot,p}]) \to \Z[\pi/\pi^{(n)}_{cot,p}]. 
\] 
\begin{definition}\label{def:p-F_n} Let $n$ be a positive integer and $p$ a prime. A knot $K$ is {\it $(n,p)$-solvable via $W$} if there exists a compact connected 4-manifold $W$ with boundary $M(K)$ satisfying the following:
let $\pi:=\pi_1W$ and $r:=\frac12 \rank_\Z H_2(W)$.
\begin{enumerate}
	\item The inclusion $M\to W$ induces an isomorphism $H_1(M)\to H_1(W)$.
	\item There exist  $x_1, x_2, \ldots, x_r$ and $y_1, y_2, \ldots, y_r$ in $H_2(W;\Z[\pi/\pi^{(n)}_{cot,p}])$ such that $\lambda_n^p(x_i,x_j)=0$ and $\lambda_n^p(x_i,y_j)=\delta_{ij}$ for $1\le i,j\le r$.
\end{enumerate}	
We say $K$ is {\it $(n.5,p)$-solvable via $W$} if the following additional condition is satisfied:
\begin{enumerate}
	\item[(3)] There exist $\tilde{x}_1,\tilde{x}_2,\ldots, \tilde{x}_r$ in $H_2(W;\Z[\pi/\pi^{(n+1)}_{cot,p}])$ such that $\lambda_{n+1}^p(\tilde{x}_i, \tilde{x}_j)=0$ for $1\le i,j\le r$ and $\tilde{x}_i$ and $x_i$ are represented by the same surface for each $i$.
\end{enumerate}
We denote the set of $(n,p)$-solvable knots and $(n.5,p)$-solvable knots by $\fpn$ and $\fpnh$, respectively. The submodules generated by $x_i$ and $y_i$ are called an {\it $(n,p)$-Lagrangian} and an {\it $(n,p)$-dual}, respectively. The submodule generated by $\tilde{x}_i$ is called an {\it $(n+1,p)$-Lagrangian.} 
\end{definition}
Since  $\pi^{(n)}\subset \pi^{(n)}_{cot,p}$, if a knot $K$ is $n$-solvable, then it is $(n,p)$-solvable for all prime $p$. We note that the notion of $G^{(n)}_{cot,p}$ is not functorial. That is, for a group homomorphism $\phi\colon G\to H$, in general $\phi(G^{(n)}_{cot,p})\nsubseteq H^{(n)}_{cot,p}$. Due to this fact, the subset $\fpn$ does not descend to a subgroup of $\cC$.

The following theorem generalizes \cite[Theorem 5.2]{Cochran-Harvey-Leidy:2009-2} and \cite[Theorem 3.2]{Cha:2010-1}, and it can be proved using the same arguments as in the proof of \cite[Theorem 3.2]{Cha:2010-1}. Therefore, we give only a sketch of the proof.
\begin{theorem} \label{thm:vanishing rho-invariant-(n,p)}
Let $p$ be a prime and $n$ a positive integer. Let $K$ be a knot which is $(n.5, p)$-solvable via $W$. Let $G$ be an amenable group lying in Strebel's class $D(\Z_p)$. Let $\pi:=\pi_1W$ and suppose we are given a homomorphism
	 \[
	 \phi\colon \pi_1M(K)\to \pi\to \pi/\pi^{(n+1)}_{cot,p}\to G
	 \] 
	 which sends the meridian of $K$ to an infinite order element in $G$. Then $\rhot(M(K),\phi)=0$.
\end{theorem}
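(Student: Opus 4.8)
The plan is to mimic the proof of the Amenable Signature Theorem~\ref{thm:obstruction} (i.e.\ \cite[Theorem~1.3]{Cha:2010-1}) and of \cite[Theorem~3.2]{Cha:2010-1}, replacing the ordinary derived series by the mixed series $\{\pi^{(k)}_{cot,p}\}$ throughout. The only place where functoriality of the derived series was used in the original argument is in controlling the $L^2$-dimensions of the homology of $W$ and of $M(K)$ with $\N G$-coefficients; since $\phi$ is \emph{given} as factoring through $\pi/\pi^{(n+1)}_{cot,p}$, we never need $\pi^{(k)}_{cot,p}$ to be natural — we only need the filtration on this one manifold $W$. So the argument should go through essentially verbatim once the relevant homological estimates are reestablished.

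First I would recall that $G$ amenable and in Strebel's class $D(\Z_p)$ gives, by Cha--Orr \cite{Cha-Orr:2009-1}, a well-behaved $L^2$-theory: $\N G$ is a flat-enough coefficient system that the $L^2$-Betti numbers of $W$ can be compared with the $\Z_p$-Betti numbers along the tower $\Z_p \to \Z_p[\pi/\pi^{(1)}_{cot,p}] \to \cdots \to \Z_p[\pi/\pi^{(n+1)}_{cot,p}] \to \N G$, using that each $\Z_p[\pi^{(k)}_{cot,p}/\pi^{(k+1)}_{cot,p}]$ embeds in a skew field (this is exactly the content of the construction of $G^{(n+1)}_{cot,p}$ in the excerpt: $G^{(n)}_r/[\,G^{(n)}_r,G^{(n)}_r\,]$ tensored up into $\KK[t^{\pm1}]$, a noncommutative PID, and $\KK[t^{\pm1}]$ is flat over $\Z[G/G^{(n)}_r]$ by \cite[Proposition~II.3.5]{Stenstrom:1975}). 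The upshot is the dimension bound $\dim^{(2)}_{\N G} H_2(W;\N G) \le \operatorname{rank}_{\Z} H_2(W;\Z) = 2r$, and more importantly that the $(n+1,p)$-Lagrangian $\langle \tilde x_1,\dots,\tilde x_r\rangle$ and its dual, pushed forward to $\N G$-coefficients, span a metabolizer for the $L^2$-intersection form $\lambda$ on $H_2(W;\N G)$: the Lagrangian is self-annihilating because $\lambda^p_{n+1}(\tilde x_i,\tilde x_j)=0$ maps to $0$ in $\N G$, and it is half-dimensional because the dual pairs with it unimodularly over $\Z[\pi/\pi^{(n)}_{cot,p}]$, hence over $\N G$.

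Given a half-dimensional self-annihilating subspace for $\lambda$ on $H_2(W;\N G)$, the $L^2$-signature $\operatorname{sign}^{(2)}_G(W)$ vanishes — this is the standard algebraic fact (the $L^2$-analogue of "a metabolic form has zero signature", valid over $\N G$ by the work underlying \cite{Cochran-Orr-Teichner:2002-1, Cha-Orr:2009-1}). Simultaneously the ordinary signature $\operatorname{sign}(W)$ vanishes for the same reason at the level of $\Z$- (or $\Q$-) coefficients, using that $x_i$ and $\tilde x_i$ are represented by the same surfaces so the classes $x_i$ already give a metabolizer for the ordinary intersection form. Therefore $S_G(W)=\operatorname{sign}^{(2)}_G(W)-\operatorname{sign}(W)=0$. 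Finally, since $\partial W = M(K)$ and $\phi$ on $\pi_1 M(K)$ factors through $\pi_1 W \to G$ sending the meridian to an infinite-order element (so that no correction term from a finite quotient appears), the definition of the von Neumann--Cheeger--Gromov invariant gives $\rhot(M(K),\phi)=S_G(W)=0$.

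The main obstacle I expect is \emph{not} the signature vanishing but the bookkeeping of $L^2$-dimensions along the mixed tower: one must check that at each stage $k<n$ the relevant $\Z_p$-homology group $H_*(W;\Z_p[\pi/\pi^{(k)}_{cot,p}])$ is a torsion module over the PID $\KK_k[t^{\pm1}]$ after localizing (so that it contributes nothing to the $L^2$-Betti number of $W$), exactly as in the proof of \cite[Theorem~3.2]{Cha:2010-1}; this uses the hypothesis $H_1(W)\cong\Z$ and the construction of $\pi^{(k+1)}_{cot,p}$ as a kernel into $H_1(W;\KK_k[t^{\pm1}])$, which forces the successive quotients to behave like torsion modules over noncommutative PIDs. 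Since the excerpt explicitly states that this theorem "can be proved using the same arguments as in the proof of \cite[Theorem~3.2]{Cha:2010-1}", I would present the proof as a sketch: set up the tower, invoke the Cha--Orr $L^2$-machinery for the dimension count, note that the $(n+1,p)$-Lagrangian yields an $L^2$-metabolizer, conclude $S_G(W)=0$, and deduce the vanishing of $\rhot$.
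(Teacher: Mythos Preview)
Your high-level approach is correct and matches the paper's: push the $(n+1,p)$-Lagrangian forward to $H_2(W;\N G)$, show it is self-annihilating of $L^2$-dimension at least $r=\frac12\rank_\Z H_2(W)$, invoke the metabolic-implies-zero-signature fact \cite[Proposition~3.7]{Cha:2010-1} to get $\lsign_G(W)=0$, and observe that the same surfaces give a metabolizer over $\Q$ so $\sign(W)=0$; hence $\rhot(M(K),\phi)=S_G(W)=0$.

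Where you diverge from the paper is in the dimension count, and your version is both more complicated and not quite right. You do \emph{not} need to climb a tower through the intermediate stages $\pi^{(k)}_{cot,p}$ for $k<n$, nor to check torsion conditions over $\KK_k[t^{\pm1}]$ at each level --- the filtration below level $n$ plays no role whatsoever. The paper simply invokes \cite[Lemmas~3.13 and~3.14]{Cha:2010-1} to get $\ldim H_2(W;\N G)=\dim_{\Z_p}H_2(W;\Z_p)=2r$ directly, using only that $G$ is amenable and in $D(\Z_p)$. For the lower bound on the Lagrangian, your appeal to ``the dual pairs with it unimodularly over $\N G$'' is the right intuition but does not by itself give an $L^2$-dimension bound, since $\N G$ is not a field. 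The actual mechanism is \cite[Theorem~3.11]{Cha:2010-1}: if $H\subset H_2(W;\N G)$ and $\bar H\subset H_2(W;\Z_p)$ are the submodules generated by the images of the $\tilde x_i$, then $\ldim H\ge \ldim H_2(W;\N G)-\dim_{\Z_p}H_2(W;\Z_p)+\dim_{\Z_p}\bar H$. The duals $y_i$ witness $\dim_{\Z_p}\bar H=r$ at the $\Z_p$-level, and Cha's theorem converts this into $\ldim H\ge r$. The only role of $\pi^{(n+1)}_{cot,p}$ is that $\phi$ factors through it, so the vanishing $\lambda_{n+1}^p(\tilde x_i,\tilde x_j)=0$ survives to $\N G$-coefficients.
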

\begin{proof}[Sketch of proof]  We have $\rhot(M(K),\phi)=S_G(W)=\lsign_G(W)-\sign(W)$, and we will show that $S_G(W)=0$. From Definition~\ref{def:p-F_n}, we have an $(n+1,p)$-Lagrangian in $H_2(W;\Z[\pi/\pi^{(n+1)}_{cot,p}])$ generated by $\tilde{x}_1,\tilde{x}_2,\ldots, \tilde{x}_r$ and its $n$-dual in $H_2(W;\Z[\pi/\pi^{(n)}_{cot,p}])$ generated by $y_1,y_2,\ldots, y_r$ where $r=\frac12 \rank_\Z H_2(W)$. The images of $\tilde{x_i}$ and $y_i$ consist of a 0-Lagrangian and its 0-dual in $H_2(W;\Q)$, and therefore $\sign(W)=0$.
	
We show that $\lsign_G(W)=0$. Let $\N G$ be the group von Neumann algebra of $G$ and 
\[
\lambda\colon H_2(W;\N G)\times H_2(W;\N G)\to \N G
\] 
be the corresponding intersection form where the coefficients of the homology groups are twisted via $\phi$. Since $\phi\colon \pi_1M(K)\to G$ factors through $\pi/\pi^{(n+1)}_{cot,p}$, we have the induced map $H_2(W;\Z[\pi/\pi^{(n+1)}_{cot,p}])\to H_2(W;\N G)$.

Let $H$ be the submodule of $H_2(W;\N G)$ generated by the images of $\tilde{x_i}$ under this map, and $\bar{H}$ be the submodule of $H_2(W;\Z_p)$ generated by the images of $\tilde{x_i}$. Since the images of $\tilde{x_i}$ in $H_2(W;\Z_p)$ has a dual, which is the images of $y_i$, we have $\dim_{\Z_p}\bar{H}=r$. By Lemmas 3.13 and 3.14 in \cite{Cha:2010-1}, we have $\ldim H_2(W;\N G)=\dim_{\Z_p} H_2(W;\Z_p)=2r$ where $\ldim$ denotes the $\Lt$-dimension function on $\N G$-modules.  By \cite[Theorem~3.11]{Cha:2010-1}, we have
\[
\ldim H\ge \ldim H_2(W;\N G) - \dim_{\Z_2}H_2(W;\Z_p)+\dim_{\Z_p}\bar{H}=r.
\]
Now by \cite[Proposition 3.7]{Cha:2010-1}, we have $\lsign_G(W)=0$.
\end{proof}
Now we give a proof of Theorem~\ref{thm:refined_main_theorem-2}.

\begin{proof}[Proof of Theorem~\ref{thm:refined_main_theorem-2}]
Since $\cG_{n+2.5}\subset \F_{n.5}$, the last part follows from the fact that the basis knots of the infinite rank subgroup $\cG_{n+2}/\cG_{n+2.5}$ in \cite{Horn:2010-1} are linearly independent modulo $\F_{n.5}$, which is shown in the proof of \cite[Theorem~5.2]{Horn:2010-1}. 

Let $J$ be a nontrivial linear combination of the $K_i$ and let $L$ be a knot which is concordant to a linear combination of the $n$-solvable knots in \cite{Cochran-Harvey-Leidy:2009-1}, \cite{Horn:2010-1}, \cite{Cochran-Harvey-Leidy:2009-2}, or \cite{Cha:2010-1}, which are constructed using iterated doubling operators and generate an infinite rank subgroup of $\F_n/\F_{n.5}$. We will show $J\notin \F_{n.5}^{cot,p}$ for some prime $p$, but $L\in \F_{n.5}^{cot,p}$ for all prime $p$, and this will prove the theorem.
 
First, suppose $J=\#_ia_iK_i$ where $a_i\in \Z$.  As in the proof of Theorem~\ref{thm:refined main theorem-1}, we may assume $a_1>0$. We will show $J\notin \F_{n.5}^{cot,p_1}$.  Suppose to the contrary that $J$ is $(n.5,p_1)$-solvable via $V$. We follow the proof of Theorem~\ref{thm:refined main theorem-1} with the changes and observations below:
\begin{enumerate}
	\item Now $V$ is not an $n.5$-solution; it is only an $(n.5,p_1)$-solution.
	\item Use $\pi_1W^{(n+1)}_{cot,p_1}$ instead of $\cP^{n+1}\pi_1W$. We keep $\cP^n\pi_1W$ as it is, noting that $\cP^n\pi_1W = \pi_1W^{(n)}_r$.
	\item $\cP^n\pi_1W/\pi_1W^{(n+1)}_{cot,p_1}$ is a $\Z_{p_1}$-vector space: from the 
	definitions, one can see  that $\cP^{n+1}\pi_1W\subset \pi_1W^{(n+1)}_{cot,p_1}$, and therefore $\cP^n\pi_1W/\pi_1W^{(n+1)}_{cot,p_1}$ is a quotient group of $\cP^n\pi_1W/\cP^{n+1}\pi_1W$, which is a $\Z_{p_1}$-vector space.
	\item We change the definition of $G$: let $G:=\pi_1W/\pi_1W^{(n+1)}_{cot,p_1}$. Then $G$ admits a subnormal series
	\[
	\{e\} \subset G^{(n)}_r\subset G^{(n-1)}_r\subset \cdots G^{(1)}_r\subset G
	\]
	whose successive quotients are abelian and have no torsion coprime to $p_1$. Therefore, the group $G$ is amenable and lies in Strebel's class $D(\Z_{p_1})$ (see \cite[Lemma~6.8]{Cha-Orr:2009-1}).
	\item Use Theorem~\ref{thm:vanishing rho-invariant-(n,p)} instead of Theorem~\ref{thm:obstruction} to show $S_G(V)=0$.
	\item The crucial part is to show that there is some $\eta_i$ such that $\phi(\eta_i)\ne e$ for the homomorphism $\phi\colon \pi_1W\to G$. In the proof of Theorem~\ref{thm:refined main theorem-1}, we use Theorem~\ref{thm:eta_i} to show this when $G=\pi_1W/\cP^{n+1}\pi_1W$. Theorem~\ref{thm:eta_i} is proved in the end of Subsection~\ref{subsec:algebraic n-solution} in which we show that there exists some $\eta_i$ such that $\eta_i\notin \cP^{n+1}\pi_1W$. But in that proof a stronger fact is proved for $\eta_i$: this $\eta_i$ maps to a nontrivial element in $H_1(W;\KK[t^{\pm 1}])$ where $\KK$ is the skew quotient field of $\Z_p[\pi_1W^{(1)}/\pi_1W^{(n)}_r]$ (in our case $p=p_1$). This implies that this $\eta_i$ maps to a nontrivial element even in $\cP^n\pi_1W/\pi_1W^{(n+1)}_{cot,p_1}$. Therefore for this $\eta_i$, we have $\phi(\eta_i)\ne e$.
\end{enumerate}
Then, we obtain $|S_G(W)|>C$ as in the proof of Theorem~\ref{thm:refined main theorem-1}, which is a contradiction. This completes the proof that $J\notin \F_{n.5}^{cot,p_1}$.

Next, we show $L\in \F_{n.5}^{cot,p}$ for all prime $p$. The proof is similar to the one of Proposition~9.2 in \cite{Cochran-Harvey-Leidy:2009-2}. Suppose $L$ is concordant to a knot $J:=\#_i a_i J_i$ where $a_i\in \Z$ and $J_i$ are $n$-solvable knots in \cite{Cochran-Harvey-Leidy:2009-1}, \cite{Horn:2010-1}, \cite{Cochran-Harvey-Leidy:2009-2}, or \cite{Cha:2010-1}, which are constructed using iterated doubling operators and generate an infinite rank subgroup of $\F_n/\F_{n.5}$. 

We explain iterated doubling operators in more detail. Let $R$ be a slice knot and $\alpha$ be a finite set of simple closed curves $\{\eta_1,\eta_2,\ldots, \eta_m\}$ in $S^3\setminus R$ such that $\eta_i$ form an unlink in $S^3$ and $\eta_i\in \pi_1(S^3\setminus R)^{(1)}$ for all $i$. For a given knot $J$, we write $R_\alpha(J)$ for the knot $R(\eta_1,\ldots,\eta_m;J)$, the knot obtained by infecting $R$ by $J$ along $\eta_\ell$  $(1\le \ell\le m)$ as defined in Section~\ref{sec:construction}, and we say $R_\alpha$ is a {\it doubling operator}. Then, an {\it iterated doubling operator} (at level $n$) is obtained by applying doubling operators $n$ times: $R^n_{\alpha_n}\circ R^{n-1}_{\alpha_{n-1}}\circ \cdots \circ R^1_{\alpha_1}$. In this paper, by a knot constructed using iterated doubling operators we mean a knot $(R^n_{\alpha_n}\circ R^{n-1}_{\alpha_{n-1}}\circ \cdots \circ R^1_{\alpha_1})(J)$ for some knot $J$ with vanishing Arf invariant. Here, we need $J$ to have vanishing Arf invariant to make the resulting knot $n$-solvable (see \cite[Proposition~3.1]{Cochran-Orr-Teichner:2002-1}). Furthermore, in this paper to simplify notations we consider only the iterated doubling operators where each $\alpha_i$ for $R^i$ is a single curve. Our proof can be easily and straightforwardly adapted to the general case of iterated doubling operators with multiple curves at each level.

Now since $L$ is concordant to $J=\#a_iJ_i$, there exists a homology cobordism $V$ between $M(L)$ and $M(J)$. Let $C$ be the standard cobordism between $M(J)$ and $\coprod a_i M(J_i)$. Then $V\cup C$, the union along the common boundary $M(J)$, has boundary $\partial (V\cup C) = \coprod (-a_i M(J_i)) \coprod M(L)$. 

Let us write $J_i=(R^n_{\alpha_n}\circ R^{n-1}_{\alpha_{n-1}}\circ \cdots \circ R^1_{\alpha_1})(J_i^0)$ for some knot $J_i^0$. Let $\mu_0$ be the meridian of $J_i^0$. Let $W_i$ be an $n$-solution for $M(J_i)$ such that $H_2(W_i)\cong H_2(V_i)$ for a 0-solution $V_i$ for $J_i^0$. We may assume $V_i\subset W_i$ and $\pi_1V_i\cong \Z$, which is generated by the meridian $\mu_0$ of $J_i^0$. The existence of such $W_i$ is well-known in the literature (see the proofs of \cite[Theorem~6.2]{Cochran-Harvey-Leidy:2009-2} and \cite[Proposition~4.4]{Cha:2010-1}). 

Let $W$ be the union of $V\cup C$ and $\coprod a_iW_i$ along their common boundary $\coprod a_iM(J_i)$. Then $\partial W=M(L)$. We will show that $W$ is an $(n.5, p)$-solution for $L$ for all prime $p$.

Using Mayer-Vietoris sequences, one can see that $H_2(W)\cong \oplus H_2(W_i)^{|a_i|}$, and therefore $H_2(W)\cong \oplus H_2(V_i)^{|a_i|}$. Since $\alpha_j\in \pi_1(S^3\setminus R^j)^{(1)}$ for $j=1,2,\ldots, n$, one can easily see that $\mu_0\in \pi_1M(J_i)^{(n)}$, and hence $\mu_0\in \pi_1W^{(n)}$.  Since $\pi_1V_i$ is generated by the meridian $\mu_0$, the 0-Lagrangians and 0-duals of $V_i$ for all $i$ form an $n$-Lagrangian, say $[L_j]$, and $n$-dual, say $[D_j]$, of $W$. Since $\pi^{(n)}\subset \pi^{(n)}_{cot,p}$, $[L_j]$ and $[D_j]$ are an $(n,p)$-Lagrangian and its $(n,p)$-dual, respectively, for all prime $p$.

We will show that $[L_j]$ form an $(n+1,p)$-Lagrangian of $W$ for all prime $p$, which implies that $W$ is an $(n.5, p)$-solution for all prime $p$. Since each of $L_j$ is a surface in $V_i$ for some $i$ and $V_i\subset  W$ for all $i$, it suffices to show that $\pi_1V_i$ maps to $\pi_1W^{(n+1)}_{cot,p}$ by the inclusion-induced homomorphism. Since $\pi_1V_i\cong \Z$ is generated by the meridian $\mu_0$, we only need to show $\mu_0\in \pi_1W^{(n+1)}_{cot,p}$.

Fix a prime $p$ and let $G:=\pi_1W$. Since $\mu_0\in G^{(n)}$, by the definition of $G^{(n+1)}_{cot,p}$ we only need to show $\mu_0\in  G^{(n)}_r/[G^{(n)}_r,G^{(n)}_r]$ is $\KK[t^{\pm 1}]$-torsion where $\KK$ is the skew quotient field of the group ring $\Z_p[G^{(1)}/G^{(n)}_r]$. From the iterated doubling operator construction of $J_i$, the meridian $\mu_0$ is identified with the curve $\alpha_1$. We show that $\alpha_1$ is $\KK[t^{\pm 1}]$-torsion following the arguments in the proof of \cite[Proposition~9.2]{Cochran-Harvey-Leidy:2009-2}. Let $J_i^1:=R^1_{\alpha_1}(J_i^0)$ and let $\mu_1$ be the meridian of $J_i^1$. Again since $\alpha_j\in \pi_1(S^3\setminus R^j)^{(1)}$ for all $j$, one can easily prove $\mu_1\in G^{(n-1)}$. Therefore $\pi_1M(J_i^1)\subset G^{(n-1)}$, and hence $\pi_1M(J_i^1)^{(1)}\subset G^{(n)}_r$ and $\pi_1M(J_i^1)^{(2)}\subset [G^{(n)}_r, G^{(n)}_r]$. Now let $\Delta(t)$ be the Alexander polynomial of $J_i^1$, which is the same as that of $R^1$. Since $\alpha_1\in \pi_1(S^3\setminus R^1)^{(1)}$, the polynomial $\Delta(t)$ annihilates $\alpha_1$ in the module $\pi_1M(J_i^1)^{(1)}/\pi_1M(J_i^1)^{(2)}$, and therefore $\Delta(\mu_1)$ annihilates $\alpha_1$ in the module $G^{(n)}_r/[G^{(n)}_r,G^{(n)}_r]$. Since $n\ge 2$ and $\mu_1\in G^{(n-1)}$, we have $\mu_1\in G^{(1)}$ and $\Delta(\mu_1)\in \Z_p[G^{(1)}/G^{(n)}_r]\subset \KK\subset \KK[t^{\pm 1}]$. Furthermore, since $\Delta(\mu_1)$ augments to 1, i.e., $\Delta(1)=1$, $\Delta(\mu_1)\ne 0$ in $\KK[t^{\pm 1}]$. This implies that $\alpha_1$ is $\KK[t^{\pm 1}]$-torsion. 
\end{proof}

\section{Modulo $p$ Blanchfield linking forms and algebraic $n$-solutions}\label{sec:Blanchfield linking form and algebraic n-solutions}

This section is devoted to proving Theorem~\ref{thm:eta_i}. To this end, in Subsection~\ref{subsec:Blanchfield form} we prove Theorem~\ref{thm:nontrivial} which asserts the nontriviality of some homomorphisms on first homology with twisted coefficients induced from the inclusion from $M(K)$ to an $n$-cylinder one of whose boundary components is $M(K)$. Then, in Subsection~\ref{subsec:algebraic n-solution} we introduce the notion of $\Z_p$-coefficient algebraic $n$-solutions, which generalizes the notion of algebraic $n$-solutions in \cite{Cochran-Teichner:2003-1, Cochran-Kim:2004-1}, and relevant theorems. Finally, we give a proof of Theorem~\ref{thm:eta_i} at the end of Section~\ref{sec:Blanchfield linking form and algebraic n-solutions}. 

\subsection{Modulo $p$ Blanchfield linking forms}\label{subsec:Blanchfield form}
The purpose of this subsection is to prove Theorem~\ref{thm:nontrivial} below, which plays a key role in the proof of Theorem~\ref{thm:eta_i}. Theorem~\ref{thm:nontrivial} generalizes \cite[Theorem~3.8]{Cochran-Kim:2004-1} to the case of a homomorphism on first homology whose coefficient group is a localization of a group ring with $\Z_p$ coefficients. 

First, we need the following lemma.
\begin{lemma}[{\cite[Proposition 2.5]{Cochran-Orr-Teichner:1999-1} for $R=\Q$ and \cite[Lemma 5.2]{Cha:2010-1} for $R=\Z_p$}]
\label{lem:quotient field}
	Let $R=\Q$ or $\Z_p$. If $\Gamma$ is a PTFA group, then $R\Gamma$ is an Ore domain. That is, $R\Gamma$ embeds in the (skew) quotient filed $\K=R\Gamma(R\Gamma-\{0\})^{-1}$.
\end{lemma}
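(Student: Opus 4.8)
The plan is to prove this by induction on the length of a poly-(torsion-free abelian) series for $\Gamma$, reducing each time to the case of a twisted Laurent polynomial ring in one variable. Fix a subnormal series $\{1\}=\Gamma_0\triangleleft\Gamma_1\triangleleft\cdots\triangleleft\Gamma_k=\Gamma$ with every $\Gamma_{i+1}/\Gamma_i$ torsion-free abelian. If $k=0$ then $R\Gamma=R$ is a field (both $\Q$ and $\Z_p$ are), hence an Ore domain. For the inductive step put $H:=\Gamma_{k-1}$, a normal subgroup that is PTFA via a series of length $k-1$, and $A:=\Gamma/H$, which is torsion-free abelian; by the inductive hypothesis $RH$ is an Ore domain. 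A set-theoretic section of $\Gamma\to A$ presents $R\Gamma$ as a crossed product $RH\ast A$: a free left $RH$-module on coset representatives, with twisted multiplication. Writing $A$ as the directed union of its finitely generated subgroups $A_\beta\cong\Z^{m_\beta}$ gives $R\Gamma=\varinjlim\,(RH\ast A_\beta)$, and a directed union of right Ore domains is a right Ore domain (the Ore condition for two elements is checked in whichever term of the system contains both), so it is enough to handle $RH\ast\Z^m$; and $RH\ast\Z^m\cong(RH\ast\Z^{m-1})\ast\Z$, so a second induction reduces everything to the crossed product of an Ore domain with $\Z$.

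So the heart of the matter is the following. \emph{Key Lemma:} if $S$ is a two-sided Ore domain and $\sigma\in\operatorname{Aut}(S)$, then the twisted Laurent polynomial ring $S[t^{\pm1};\sigma]$ (with $ts=\sigma(s)t$) is a two-sided Ore domain; a crossed product $S\ast\Z$ has this form because $H^2(\Z;S^\times)=0$ lets one trivialize the defining $2$-cocycle by a change of coset representatives. That $S[t^{\pm1};\sigma]$ is a domain is immediate by comparing top coefficients in $t$: the top term of a product is $a_p\sigma^p(b_q)t^{p+q}$, nonzero because $S$ is a domain and $\sigma$ bijective. For the Ore condition, let $Q=S(S\setminus\{0\})^{-1}$ be the division ring of fractions of $S$ (which $\sigma$ extends to), and observe that $Q[t^{\pm1};\sigma]$ is a skew Laurent ring over a division ring, hence a left and right principal ideal domain, in particular Noetherian and therefore Ore, with division ring of fractions $D$; then $S[t^{\pm1};\sigma]\hookrightarrow Q[t^{\pm1};\sigma]\hookrightarrow D$. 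Given nonzero $f,g\in S[t^{\pm1};\sigma]$, the Ore condition in $Q[t^{\pm1};\sigma]$ yields $u,v$ there with $fu=gv\ne0$; right-multiplying $u$ and $v$ by a suitable nonzero element of $S$ — obtained from the fact that finitely many nonzero principal right ideals of the Ore domain $S$ have nonzero intersection, used to clear the denominators of the coefficients of $u,v$ after commuting them past powers of $t$ via powers of $\sigma$ — produces $u',v'\in S[t^{\pm1};\sigma]$ with $fu'=gv'\ne0$. The left Ore condition is symmetric. Granting the Key Lemma, the lemma follows, and $\K$ is just the Ore localization $R\Gamma(R\Gamma\setminus\{0\})^{-1}$.

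The step I expect to be the real obstacle is exactly the right Ore condition in the Key Lemma: because $S=RH$ is in general not Noetherian, one cannot simply invoke ``a skew Laurent extension of a Noetherian ring is Noetherian'', so the clearing-of-denominators has to be carried out by hand, keeping track of how the coefficient denominators interact with powers of $t$ through $\sigma$ and verifying that finitely many elements of $Q[t^{\pm1};\sigma]$ really do admit a common right $S$-denominator. Everything else — the reductions to one variable, the leading-term argument for the domain property, and the base case (identical for $R=\Q$ and $R=\Z_p$) — is routine. As a shortcut one could instead note that a PTFA group is elementary amenable and left-orderable, deduce from left-orderability by a leading-term argument that $R\Gamma$ has no zero divisors, and then quote the theorem that the group ring of an amenable group which is a domain is automatically an Ore domain; but the inductive argument above is self-contained and parallels \cite[Proposition~2.5]{Cochran-Orr-Teichner:1999-1} and \cite[Lemma~5.2]{Cha:2010-1}.
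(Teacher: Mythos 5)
The paper gives no proof of this lemma: it is stated as a direct citation to \cite[Proposition~2.5]{Cochran-Orr-Teichner:1999-1} (for $R=\Q$) and \cite[Lemma~5.2]{Cha:2010-1} (for $R=\Z_p$), so there is no in-paper argument to compare against. Your self-contained reconstruction is correct and parallels the standard route in those sources: induction on the length of a subnormal series with torsion-free abelian quotients, the directed-limit reduction of the abelian quotient to $\Z^m$ (the two-element Ore condition passes to directed unions), the further reduction to a single twisted Laurent extension $S[t^{\pm1};\sigma]$, the domain property by leading coefficients, and the Ore condition by first localizing $S$ to its division ring $Q$, using that $Q[t^{\pm1};\sigma]$ is a (left and right) PID, and then clearing denominators. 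The denominator-clearing step is the right place to worry, and you handle it correctly: writing the coefficients of $u$ as $a_ib_i^{-1}$, commuting a prospective common denominator $s$ past $t^i$ turns the requirement into $s\in\bigcap_i\sigma^{-i}(b_i)S$, which is a finite intersection of nonzero principal right ideals of the right Ore domain $S$ and hence nonzero. The only cosmetic point is the appeal to $H^2(\Z;S^\times)=0$: since $S^\times$ is nonabelian this is not literally the right invariant, but the conclusion stands for a cleaner reason — over $\Z$ (or any free group) one may choose the basis $\bar n:=\bar 1^{\,n}$, which makes the defining cocycle identically $1$, so every crossed product $S\ast\Z$ is a twisted Laurent extension. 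Your alternative shortcut (PTFA $\Rightarrow$ left-orderable $\Rightarrow$ $R\Gamma$ a domain, plus amenability $\Rightarrow$ Ore) is also a legitimate proof and is in fact closer to the way \cite{Cochran-Orr-Teichner:1999-1} deduces the domain property; either argument yields the lemma uniformly for $R=\Q$ and $R=\Z_p$, with the choice of $R$ entering only in the base case.
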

Let $R=\Q$ or $\Z_p$. Let $\Gamma$ be a PTFA group such that $H_1(\Gamma)\cong \Gamma/[\Gamma,\Gamma]\cong\Z$. Let $\K$ be the (skew) quotient field of $R\Gamma$ obtained by Lemma~\ref{lem:quotient field}. Since a subgroup of a PTFA group is also PTFA, the group $[\Gamma,\Gamma]$ is PTFA. By Lemma~\ref{lem:quotient field}, the group ring $R[\Gamma, \Gamma]$ embeds into the (skew) quotient field, say $\KK$. Since $H_1(\Gamma)\cong\Z=\langle t\rangle$, we have a (noncommutative) PID $\KK[t^{\pm 1}]$ such that $R\Gamma\subset \KK[t^{\pm 1}]\subset \K$. 

\begin{theorem}[{\cite[Theorem 3.8]{Cochran-Kim:2004-1} for $R=\Q$}]
\label{thm:nontrivial}
	Let $n\ge 1$ be an integer. Let $R=\Q$ or $\Z_p$. For a knot $K$, let $W$ be an $n$-cylinder with $M(K)$ as one of its boundary components. Let $\Gamma$ be an PTFA group such that $H_1(\Gamma)\cong \Z=\langle t\rangle$ and $\Gamma^{(n)}=\{e\}$. Let $\phi\colon \pi_1W\to \Gamma$ be a homomorphism which induces an isomorphism $H_1(W)\to H_1(\Gamma)$. Let $d:=\rank_R H_1(M_\infty;R)$ where $M_\infty$ is the infinite cyclic cover of $M(K)$. Then we have
	\[
	\rank_\KK \Im\{i_*\colon H_1(M(K);\KK[t^{\pm 1}]) \to H_1(W;\KK[t^{\pm 1}])\} \ge \left\{
	\begin{array}{cl}
	(d-2)/2	&	\mbox{if } n>1,\\
	d/2	&	\mbox{if } n=1,
	\end{array}\right. 
	\]
where $i_*$ is the inclusion-induced homomorphism.
\end{theorem}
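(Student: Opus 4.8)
The plan is to imitate the strategy of \cite[Theorem~3.8]{Cochran-Kim:2004-1}, tracking everything with $R$-coefficients in place of $\Q$-coefficients and invoking Lemma~\ref{lem:quotient field} each time an Ore-localization step is needed. First I would compare the two coefficient systems on $M(K)$. Using the factorization $R\Gamma \subset \KK[t^{\pm1}] \subset \K$, the module $H_1(M(K);\KK[t^{\pm1}])$ is computed from $H_1(M(K);R[t^{\pm1}]) \cong H_1(M_\infty;R)$ by tensoring up: since $\KK[t^{\pm1}]$ is a flat (indeed, by \cite[Proposition II.3.5]{Stenstrom:1975}, flat) module over $R[t^{\pm1}]$, one gets $\dim_{\KK}$ of the ``free part'' equal to $d = \rank_R H_1(M_\infty;R)$ after killing $R[t^{\pm1}]$-torsion, plus a contribution from the $\KK[t^{\pm1}]$-torsion submodule which carries the (modulo-$p$) Blanchfield linking form. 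The key input is that this Blanchfield form on the torsion part of $H_1(M(K);\KK[t^{\pm1}])$ is nonsingular; this is exactly the $R = \Z_p$ analogue of the classical statement, and the proof is the usual duality argument (Poincar\'e--Lefschetz duality for $M(K)$ with $\KK[t^{\pm1}]$-coefficients together with the universal coefficient spectral sequence, using that $\KK[t^{\pm1}]$ is a PID so the relevant $\Ext$ and $\Tor$ terms are controlled). The separate treatment of $n=1$ versus $n>1$ enters because when $n=1$ the map $\phi$ is essentially abelianization and the whole of $H_1(M(K);\KK[t^{\pm1}])$ is relevant, whereas for $n>1$ one must pay a ``$-2$'' for the two dimensions where the intersection/linking structure is degenerate.

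Next I would bring in the $n$-cylinder $W$. Because $W$ is an $n$-cylinder with $M(K)$ one of its boundary components and $\phi$ induces an isomorphism on $H_1$, I would consider the pair $(W,\partial W)$ and run the half-lives-half-dies argument: Poincar\'e--Lefschetz duality over $\KK[t^{\pm1}]$ (a PID, so this is legitimate after the localization of Lemma~\ref{lem:quotient field}) shows that the image of $H_1(\partial W;\KK[t^{\pm1}]) \to H_1(W;\KK[t^{\pm1}])$ is ``self-annihilating'' of rank roughly half that of $H_1(\partial W;\KK[t^{\pm1}])$. Restricting this to the single boundary component $M(K)$, and using that the $n$-Lagrangian/$n$-dual data from Definition~\ref{def:n-cylinder} kills the contributions of the closed surfaces $x_i, y_i$ in the $\Gamma$-coefficient intersection form (hence, after pushing to $\KK[t^{\pm1}]$, those classes do not obstruct the rank estimate), I get that $\Im\{i_*\colon H_1(M(K);\KK[t^{\pm1}]) \to H_1(W;\KK[t^{\pm1}])\}$ has $\KK$-rank at least half of $\rank_\KK H_1(M(K);\KK[t^{\pm1}])$ minus the correction term. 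Combining with the computation of $\rank_\KK H_1(M(K);\KK[t^{\pm1}])$ from the first step yields the stated bounds $(d-2)/2$ for $n>1$ and $d/2$ for $n=1$.

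The main obstacle I anticipate is establishing nonsingularity of the modulo-$p$ higher-order Blanchfield linking form over $\KK[t^{\pm1}]$ with enough care that the half-lives-half-dies bookkeeping goes through cleanly: over $\Q$ this is standard, but over $\Z_p$ one must check that the relevant homology modules are still torsion $\KK[t^{\pm1}]$-modules of the right size (this is where \cite[Lemmas~3.13 and~3.14]{Cha:2010-1}-type statements about $\rank_\Z$ versus $\rank_{\Z_p}$ are used — here via the hypothesis that $H_1(W) \cong H_1(\Gamma) \cong \Z$ has no $p$-torsion, so $\dim_{\Z_p} H_*(W;\Z_p)$ matches $\rank_\Z H_*(W;\Z)$), and that the $\Z_p$-torsion in the untwisted homology does not leak into the twisted computation in a way that spoils the duality pairing. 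A secondary technical point is justifying that $\KK[t^{\pm1}]$ is indeed a (noncommutative) PID in the $R=\Z_p$ case and that all the localization exactness one needs is available; this is handled by Lemma~\ref{lem:quotient field} together with the remarks following it, so I would cite those and not re-derive them. Once nonsingularity of the linking form is in hand, the remaining steps are the same diagram chases and rank counts as in \cite{Cochran-Kim:2004-1}, with $\Q$ systematically replaced by $R$.
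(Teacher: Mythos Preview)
Your overall approach---follow the proof of \cite[Theorem~3.8]{Cochran-Kim:2004-1} and systematically replace the $\Q$-coefficient inputs by their $\Z_p$ analogues---is exactly what the paper does, and your identification of the nonsingular Blanchfield form (Theorem~\ref{thm:Blanchfield}) and the self-annihilating property of the kernel (Proposition~\ref{prop:self-annihilating}) as the key ingredients is correct.

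However, there is a genuine error in your first step. You propose to compute $H_1(M(K);\KK[t^{\pm1}])$ from $H_1(M(K);R[t^{\pm1}])\cong H_1(M_\infty;R)$ by ``tensoring up'' along a flat map $R[t^{\pm1}]\to\KK[t^{\pm1}]$. This fails for $n>1$: the $\KK[t^{\pm1}]$-local system on $M(K)$ comes from the representation $\pi_1M(K)\to\Gamma$, which does \emph{not} factor through the abelianization $\pi_1M(K)\to\Z$, so there is no change-of-rings identification of the sort you describe. Consequently $\rank_\KK H_1(M(K);\KK[t^{\pm1}])$ cannot be read off from $d$ by any flatness argument, and your explanation of the ``$-2$'' as a degeneracy of the linking structure is not where it comes from. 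The paper instead invokes Lemma~\ref{lem:rank}(2) (replacing \cite[Corollary~4.8]{Cochran:2002-1}), which proves $\rank_\KK H_1(E(K);\KK[t^{\pm1}])\ge d-1$ by a Strebel-class argument: one maps a wedge of $d$ circles into the infinite cyclic cover and uses that PTFA groups lie in $D(\Z_p)$ to propagate injectivity of a chain map from $R$-coefficients to $RG$-coefficients. Combined with the passage from $E(K)$ to $M(K)$ and the half-lives--half-dies count, this is what produces $(d-2)/2$.

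One further bookkeeping point you omit: Proposition~\ref{prop:self-annihilating} requires $W$ to be an $R$-coefficient $n$-cylinder, and for $R=\Z_p$ this is supplied by Proposition~\ref{prop:implication of cylinder}, not by the definition of $n$-cylinder directly.
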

We give a proof of Theorem~\ref{thm:nontrivial} at the end of this subsection, after showing needed materials. 

 We note that the image of $i_*$ is nontrivial if $d>2$ when $n>1$ and if $d>0$ when $n=1$. In the above setting, when $R=\Q$, the rank $d$ is equal to the degree of $\Delta_K(t)$, the Alexander polynomial of $K$. When $R=\Z_p$, the rank $d$ is still equal to the degree of $\Delta_K(t)$ if the prime $p$ is bigger than the top coefficient of $\Delta_K(t)$.

To prove Theorem~\ref{thm:nontrivial}, we need to generalize the various results which were used for the proof of \cite[Theorem~3.8]{Cochran-Kim:2004-1} to the case of homology with twisted coefficients which are obtained as a localization of a group ring with $\Z_p$ coefficients. A key ingredient of the proof of Theorem~\ref{thm:nontrivial} is higher-order Blanchfield linking forms which are adapted to homology with such coefficients. 

We briefly review higher-order Blanchfield linking forms. The Blanchfield linking form in a noncommutative setting was first defined by Duval \cite{Duval:1986-1} on boundary links over a group ring of a free group. Then, Cochran--Orr--Teichner introduced the noncommutative (higher-order) Blanchfield linking form for a knot over a group ring $\Z\Gamma$ of a PTFA group $\Gamma$ and its locallizations \cite{Cochran-Orr-Teichner:1999-1}. This was generalized to the Blanchfield linking form for a knot over a group ring $\Z_p\Gamma$ for a PTFA group $\Gamma$ by Cha \cite{Cha:2010-1}. In this paper, we need the Blanchfield linking form for a knot over a (PID) localization of a group ring $\Z_p \Gamma$ for a PTFA group $\Gamma$. It will be defined in Theorem~\ref{thm:Blanchfield}, and will be used to prove Theorem~\ref{thm:nontrivial}.
 
Let $\Gamma$ be a PTFA group and $R=\Q$ or $\Z_p$. Then by Lemma~\ref{lem:quotient field} $R\Gamma$ has the (skew) quotient field of $\K$. If $\cR$ is a ring such that $R\Gamma\subset \cR\subset \K$ and $M$ is a closed 3-manifold such that $H_1(M)\cong \Z$ and $\pi_1M\to \Gamma$ is a nontrivial representation, then we have the composition of maps
\[
H_1(M;\cR)\to \overline{H^2(M;\cR)}\to \overline{H^1(M;\K/\cR)}\to \overline{\Hom_R(H_1(M;\cR), \K/\cR)}. 
\]
where the maps are Poincar\'{e} duality, the inverse of a Bockstein homomorphism, and the Kronecker evaluation map. Here, the inverse of a Bockstein homomorphism exists since $H^1(M;\K)=H^2(M;\K)=0$ and hence the Bockstein homomorphism is in fact an isomorphism (see \cite[Proposition~2.11]{Cochran-Orr-Teichner:1999-1}). Also, $\overline{H^*(M;\cR)}$ are made into right $\cR$-modules using the involution of $\cR$. 

The following theorem shows that under this setting, if $\cR$ is a PID, then the composition gives rise to a nonsingular symmetric linking form.

\begin{theorem}[{\cite[Theorem 2.13]{Cochran-Orr-Teichner:1999-1} for $R=\Q$  and essentially due to \cite[Section 5]{Cha:2010-1} for $R=\Z_p$}]
\label{thm:Blanchfield}
	Let $R=\Q$ or $\Z_p$. Let $M$ be a closed 3-manifold with $H_1(M)\cong \Z$. Let $\phi\colon \pi_1M\to \Gamma$ be a nontrivial PTFA coefficient system. Let $\cR$ be a (noncommutative) PID such that $R\Gamma\subset \cR\subset \K$ where $\K$ is the (skew) quotient field of $R\Gamma$. Then there exists a nonsingular symmetric linking form (called the {\it Blanchfield linking form}) 
\[
B\ell\colon H_1(M;\cR)\to \overline{H^2(M;\cR)}\to \overline{H^1(M;\K/\cR)}\to \overline{\Hom_R(H_1(M;\cR), \K/\cR)}. 
\]
\end{theorem}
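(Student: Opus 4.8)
The plan is to follow the proof of the $R=\Q$ case, \cite[Theorem~2.13]{Cochran-Orr-Teichner:1999-1}, essentially verbatim, substituting $\Z_p$ for $\Q$ throughout and importing from \cite[Section~5]{Cha:2010-1} the homological facts that make the substitution legitimate. Since $\Bl$ is the composite of the three maps displayed in the statement, the task is to check that each of the three is an isomorphism and that the resulting pairing is symmetric with respect to the involution on $\cR$.

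First I would record the vanishing $H_i(M;\K)=0$ for all $i$. Because $\phi$ is nontrivial and $\Gamma$ is PTFA, some $\phi(g)-1$ is a nonzero element of the Ore domain $R\Gamma$, hence a unit in $\K$, so $H_0(M;\K)=0$; from this starting point the argument of \cite[Proposition~2.11]{Cochran-Orr-Teichner:1999-1}---which is valid for $R=\Z_p$ by \cite[Section~5]{Cha:2010-1} and rests on Lemma~\ref{lem:quotient field}---gives $H_i(M;\K)=0$ for all $i$, using Poincar\'e duality, $\chi(M)=0$, and the fact that over the skew field $\K$ cohomology is dual to homology. Dually $H^i(M;\K)=0$ for all $i$.

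With this in hand the three maps are treated as follows. Poincar\'e duality for the closed oriented $3$--manifold $M$ with the $\cR$--coefficient system supplies the first isomorphism $H_1(M;\cR)\cong\overline{H^2(M;\cR)}$. The coefficient sequence $0\to\cR\to\K\to\K/\cR\to 0$ together with $H^1(M;\K)=H^2(M;\K)=0$ makes the Bockstein $\overline{H^1(M;\K/\cR)}\to\overline{H^2(M;\cR)}$ an isomorphism, and its inverse is the middle map. Finally, since $\cR$ is a noncommutative PID the module $\K/\cR$ is divisible, hence injective over $\cR$, so $\Ext^1_\cR(H_0(M;\cR),\K/\cR)=0$ and the universal coefficient sequence shows the Kronecker evaluation map is an isomorphism. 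Composing the three produces the nonsingular pairing $\Bl$. Its symmetry is the formal part of the argument and passes word for word from \cite{Cochran-Orr-Teichner:1999-1}: it comes from the graded symmetry of the cup and cap products underlying Poincar\'e duality and the compatibility of the Bockstein with these pairings, once the involution $\overline{(\cdot)}$ is tracked carefully.

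The main obstacle, compared with the familiar commutative-PID situation, is not any one of these steps but verifying that the homological algebra over the noncommutative rings $\cR$, $\K$ and $\K/\cR$ behaves as in the classical case: that $\K$ being a skew field kills the relevant $\Tor$ and $\Ext$ groups, that the localizations actually used---rings of the form $\KK[t^{\pm1}]$ with $\KK$ a skew field---are genuine noncommutative PIDs over which $\K/\cR$ is injective, and that universal coefficients and Poincar\'e duality apply with one-sided coefficients in the presence of an involution. All of this has been carried out for $R=\Z_p$ in \cite[Section~5]{Cha:2010-1}, building on \cite{Cochran-Orr-Teichner:1999-1}, so the proof amounts to assembling those facts, and I would present it that way.
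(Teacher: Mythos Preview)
Your proposal is correct and takes essentially the same approach as the paper: both follow the proof of \cite[Theorem~2.13]{Cochran-Orr-Teichner:1999-1} for $R=\Q$ and replace the key homological input \cite[Proposition~2.11]{Cochran-Orr-Teichner:1999-1} by its $\Z_p$ analogue from \cite[Section~5]{Cha:2010-1} (specifically \cite[Lemma~5.3]{Cha:2010-1}). The paper's own proof is a single sentence to this effect, while you have helpfully unpacked what the three maps are and why each is an isomorphism.
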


\begin{proof}
	When $R=\Z_p$, the proof is identical to that of \cite[Theorem 2.13]{Cochran-Orr-Teichner:1999-1} (for the case $R=\Q)$ except for the following change: in the proof, replace \cite[Proposition 2.11]{Cochran-Orr-Teichner:1999-1} by \cite[Lemma 5.3]{Cha:2010-1}.
\end{proof}

Lemma~\ref{lem:exact sequence} below is needed to prove Proposition~\ref{prop:self-annihilating}. When $R=\Q$ it was proved in \cite{Cochran-Kim:2004-1}. The proof for the case $R=\Z_p$ is essentially the same as that for the case $R=\Q$, and hence it is omitted. 

\begin{lemma}[{\cite[Lemma 3.5]{Cochran-Kim:2004-1} for $R=\Q$}]
\label{lem:exact sequence}
	Let $R=\Q$ or $\Z_p$. Let $W$ be an $R$-coefficient $n$-cylinder with $M$ as one of its boundary components. Let $\Gamma$ be a PTFA group such that $\Gamma^{(n)}=\{e\}$, and let $\phi\colon \pi_1M\to \Gamma$ be a nontrivial coefficient system which extends to $\pi_1W$. Let $\cR$ be a (noncommutative) PID such that $R\Gamma\subset \cR\subset \K$ where $\K$ is the (skew) quotient field of $R\Gamma$. Then the sequence of maps 
	\[
	TH_2(W,M;\cR)\to H_1(M;\cR)\to H_1(W;\cR)
	\]
	is exact. (Here for an $\cR$-module $N$, $TN$ denotes the $\cR$-torsion submodule of $N$.)
\end{lemma}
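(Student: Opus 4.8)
The plan is to read the statement off from the long exact sequence of the pair $(W,M)$ with $\cR$-coefficients, reduce it to a single claim about the torsion submodule and the free part of $H_2(W,M;\cR)$, and then settle that claim using the vanishing of the $\K$-coefficient homology of $M$ together with Poincar\'e--Lefschetz duality over $\cR$.

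First I would set up the ring theory. Since $\cR$ is a (noncommutative) PID it is an Ore domain, and by Lemma~\ref{lem:quotient field} its Ore localization at the set of nonzero elements is precisely the skew field $\K$; hence $\K$ is flat over $\cR$, and every finitely generated $\cR$-module is the direct sum of a free module and its torsion submodule. I would then record the key vanishing input: because $\phi$ restricted to $\pi_1M$ is a nontrivial PTFA coefficient system and $M$ is a closed $3$-manifold (with $b_1(M)=1$), one has $H_i(M;\K)=0$ for $i=0,1,2$; this is \cite[Proposition~2.11]{Cochran-Orr-Teichner:1999-1} for $R=\Q$ and \cite[Lemma~5.3]{Cha:2010-1} for $R=\Z_p$, i.e.\ exactly the inputs behind Theorem~\ref{thm:Blanchfield}. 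Tensoring with $\K$ over $\cR$ then shows that $H_1(M;\cR)$ and $H_2(M;\cR)$ are $\cR$-torsion modules.

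Next I would write down the relevant segment
\[
H_2(M;\cR)\longrightarrow H_2(W;\cR)\xrightarrow{j}H_2(W,M;\cR)\xrightarrow{\partial}H_1(M;\cR)\xrightarrow{i_*}H_1(W;\cR)
\]
of the long exact sequence of $(W,M)$. Exactness gives $\ker i_*=\Im\partial$ and $\ker\partial=\Im j$, so the asserted exactness of $TH_2(W,M;\cR)\to H_1(M;\cR)\to H_1(W;\cR)$ is equivalent to the claim
\[
H_2(W,M;\cR)=TH_2(W,M;\cR)+\Im(j).
\]
The inclusion $\partial(TH_2(W,M;\cR))\subseteq\ker i_*$ is immediate; the content is the reverse. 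Since $\Coker(j)$ embeds in the $\cR$-torsion module $H_1(M;\cR)$ and $\ker(j)$ is a quotient of the $\cR$-torsion module $H_2(M;\cR)$, the map $j$ becomes an isomorphism after $\otimes_\cR\K$; in particular $\Im(j)$ is a full-rank $\cR$-submodule of $H_2(W,M;\cR)$ with $\cR$-torsion cokernel.

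It remains to upgrade ``full rank'' to the equality above, equivalently to show that the composite $H_2(W;\cR)\xrightarrow{j}H_2(W,M;\cR)\twoheadrightarrow H_2(W,M;\cR)/TH_2(W,M;\cR)$ onto the free part is \emph{onto}, not merely of finite index. I expect this to be the crux. The way I would handle it is to bring in Poincar\'e--Lefschetz duality for the compact oriented $4$-manifold $W$ with $\cR$-coefficients (using the involution of $\cR$), which identifies $H_2(W,M;\cR)$ with $\overline{H^2(W,M';\cR)}$ for $M'=\partial W\smallsetminus M$, and the universal coefficient theorem over the PID $\cR$, which exhibits the torsion part of $H^2(W,M';\cR)$ through an $\Ext^1_\cR$-term and its free part as $\overline{\Hom_\cR(H_2(W,M';\cR),\cR)}$. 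Feeding the vanishing $H_*(M;\K)=0$ back in to kill the boundary contributions then identifies the free part of $H_2(W,M;\cR)$ with exactly the image of $H_2(W;\cR)$, which yields the displayed equality and hence the lemma. The bookkeeping in this last step --- chasing the duality and universal-coefficient exact sequences over a noncommutative PID --- is the delicate part; everything else is formal, and the argument is otherwise a verbatim transcription of \cite[Lemma~3.5]{Cochran-Kim:2004-1} for $R=\Q$ with \cite[Proposition~2.11]{Cochran-Orr-Teichner:1999-1} replaced by \cite[Lemma~5.3]{Cha:2010-1}.
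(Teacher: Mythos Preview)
Your proposal is correct and matches the paper's approach exactly: the paper omits the proof entirely, noting only that the $R=\Z_p$ case is essentially the same as the $R=\Q$ argument in \cite[Lemma~3.5]{Cochran-Kim:2004-1}, and you supply precisely that transcription together with the substitution of \cite[Lemma~5.3]{Cha:2010-1} for \cite[Proposition~2.11]{Cochran-Orr-Teichner:1999-1}. Your reduction to the equality $H_2(W,M;\cR)=TH_2(W,M;\cR)+\Im(j)$ and the identification of the Poincar\'e--Lefschetz/UCT chase as the one nontrivial step are both accurate.
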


For an $\cR$-submodule $P$ of $H_1(M;\cR)$, we define 
\[
P^\perp := \{x\in H_1(M;\cR)\,\mid\, B\ell(x)(y)=0 \mbox{ for all } y\in P\}. 
\]
We say that a submodule $P$ of $H_1(M;\cR)$ is {\it self-annihilating with respect to $B\ell$} if $P=P^\perp$. The following proposition generalizes \cite[Theorem~4.4]{Cochran-Orr-Teichner:1999-1} and \cite[Proposition~3.6]{Cochran-Kim:2004-1}.
\begin{proposition}[{\cite[Proposition 3.6]{Cochran-Kim:2004-1} for $R=\Q$}]
\label{prop:self-annihilating}
	Suppose the same hypotheses as in Lemma~\ref{lem:exact sequence}. If $P=\Ker\{i_*\colon H_1(M;\cR)\to H_1(W;\cR)\}$ where $i_*$ is the inclusion-induced homomorphism, then $P\subset P^\perp$. Moreover, if $W$ is an $R$-coefficient $n$-solution for $M$, then $P=P^\perp$. 
\end{proposition}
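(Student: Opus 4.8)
The plan is to follow the proofs of \cite[Theorem~4.4]{Cochran-Orr-Teichner:1999-1} and \cite[Proposition~3.6]{Cochran-Kim:2004-1} step by step, replacing each ingredient used there by the analogue already available for $R=\Z_p$. The Ore localization $\K$ of $R\Gamma$ and the intermediate (noncommutative) PID $\cR$ are provided by Lemma~\ref{lem:quotient field}; the nonsingular symmetric Blanchfield form $B\ell$ over $\cR$ is Theorem~\ref{thm:Blanchfield}; the exact sequence $TH_2(W,M;\cR)\to H_1(M;\cR)\to H_1(W;\cR)$ is Lemma~\ref{lem:exact sequence}; and the remaining homological algebra over $\cR$ --- Poincar\'e--Lefschetz duality with $\cR$- and with $\K/\cR$-coefficients, the universal coefficient spectral sequence, and the fact that the relevant Bocksteins are isomorphisms because $H^{i}(\,\cdot\,;\K)$ vanishes in the range in play --- is set up for $R=\Z_p$ in \cite[Section~5]{Cha:2010-1} (notably \cite[Lemma~5.3]{Cha:2010-1}).

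First I would prove $P\subset P^{\perp}$. By exactness of the homology sequence of the pair $(W,M)$ one has $P=\Im\{\partial\colon H_2(W,M;\cR)\to H_1(M;\cR)\}$, and by Lemma~\ref{lem:exact sequence} each element of $P$ is $\partial$ of a class in the $\cR$-torsion submodule $TH_2(W,M;\cR)$. Given $x,y\in P$, pick $\widehat x,\widehat y\in TH_2(W,M;\cR)$ with $\partial\widehat x=x$ and $\partial\widehat y=y$. The key is the commutative diagram of \cite[Theorem~4.4]{Cochran-Orr-Teichner:1999-1}, assembled from Poincar\'e--Lefschetz duality for $(W,M)$, the long exact sequences of the pair with $\cR$- and with $\K/\cR$-coefficients, and the Bockstein of $0\to\cR\to\K\to\K/\cR\to 0$: it expresses $B\ell_M(x)(y)$, up to sign and the involution of $\cR$, as the image in $\K/\cR$ of a pairing of $\widehat x$ and $\widehat y$ which is manifestly defined over $\cR$, being built from the $\cR$-valued intersection data of $W$ with $\widehat x,\widehat y$ serving as caps for $x,y$. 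Hence this image is $0$, so $B\ell_M(x)(y)=0$ and $P\subset P^{\perp}$. This part uses only that $W$ is an $R$-coefficient $n$-cylinder, and nothing about the number of its boundary components.

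For the ``moreover'' clause, assume in addition that $W$ is an $R$-coefficient $n$-solution for $M$, so that $M=M(K)=\partial W$ is a single component carrying the \emph{nonsingular} Blanchfield form of Theorem~\ref{thm:Blanchfield} on the finitely generated $\cR$-torsion module $H_1(M;\cR)$. Having $P\subset P^{\perp}$, it remains to see that $P$ is ``half'' of $H_1(M;\cR)$. For this I would feed the $n$-Lagrangian $\{x_i\}$ and its $n$-dual $\{y_i\}$ of $W$, from Definition~\ref{def:n-cylinder}, viewed over $\cR$, into the same duality diagram: since $\{x_i\}$ spans a genuine Lagrangian for the intersection form $\lambda$ on the submodule generated by $\{x_i,y_i\}$, the presentation of $H_1(M;\cR)$ produced by $W$ is metabolic, whence the order ideal of $P$ squares to that of $H_1(M;\cR)$; together with $P\subset P^{\perp}$ and nonsingularity of $B\ell_M$ this forces $P=P^{\perp}$. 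I expect the main obstacle to be organizational rather than conceptual: checking that every arrow in the COT duality/Bockstein diagram and every order-ideal computation remains valid over the \emph{noncommutative} PID $\cR$ obtained by localizing $\Z_p\Gamma$, rather than over a localization of $\Q\Gamma$. Since \cite[Section~5]{Cha:2010-1} develops precisely this homological algebra, no genuinely new difficulty should arise --- which is what licenses asserting that the proposition follows by the arguments of \cite[Theorem~4.4]{Cochran-Orr-Teichner:1999-1} and \cite[Proposition~3.6]{Cochran-Kim:2004-1}.
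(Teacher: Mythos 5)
Your plan is exactly the paper's: the published proof consists of a single sentence instructing the reader to rerun the proof of Cochran--Kim~\cite[Proposition~3.6]{Cochran-Kim:2004-1} with $\Z_p$-coefficient ingredients substituted (specifically \cite[Lemma~5.3]{Cha:2010-1} for their Corollary~3.3, and Lemma~\ref{lem:exact sequence} for their Lemma~3.5), which is what you propose. One small caveat on your gloss of the ``moreover'' step: the argument in \cite{Cochran-Orr-Teichner:1999-1,Cochran-Kim:2004-1} establishes $P^\perp\subset P$ directly from the duality/Bockstein diagram and the nondegeneracy supplied by the $n$-Lagrangian and its $n$-dual, rather than by an order-ideal count on a ``metabolic presentation of $H_1(M;\cR)$''; the order-ideal picture is a reasonable heuristic but would need work to make precise, whereas you are ultimately deferring to the cited proof anyway, so this does not affect the verdict.
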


\begin{proof}
For $R=\Z_p$, make the following changes in the proof of \cite[Proposition 3.6]{Cochran-Kim:2004-1}: replace Corollary~3.3 and Lemma~3.5 in \cite{Cochran-Kim:2004-1} by \cite[Lemma 5.3]{Cha:2010-1} and Lemma~\ref{lem:exact sequence} above, respectively.
\end{proof}

The proof of the following lemma is based on the ideas in \cite[Sections 3 and 4]{Cochran:2002-1}.
\begin{lemma}\label{lem:rank}
	\begin{enumerate}
		\item Let $G$ be a PTFA group and $R=\Z_p$. Let $A$ be a wedge of $m$ circles. Suppose we have a nontrivial coefficient system $\pi_1A\to G$. Then $\rank_{RG}H_1( A;RG)=m-1$.
		
		\item Suppose the same hypotheses as in Theorem~\ref{thm:nontrivial}. Suppose $n>1$ and $R=\Z_p$. Then, $\rank_\KK H_1(E(K);\KK[t^{\pm 1}])\ge d-1$.
	\end{enumerate}
	
\end{lemma}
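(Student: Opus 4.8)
For Part~(1), I would argue at chain level. The wedge $A=\bigvee_m S^1$ is a $K(F,1)$ for the free group $F$ on the generators $x_1,\dots,x_m$, and its cellular $\Z_pG$-chain complex (via the given $\phi\colon F\to G$) is
\[
0\to (\Z_pG)^m \xrightarrow{\ \partial_1\ } \Z_pG\to 0,\qquad \partial_1(e_i)=\phi(x_i)-1 .
\]
Since $G$ is PTFA, $\Z_pG$ is an Ore domain by Lemma~\ref{lem:quotient field}; let $\mathcal{Q}$ be its skew quotient field. Localization is flat, so $H_*(A;\mathcal Q)=H_*(A;\Z_pG)\otimes_{\Z_pG}\mathcal Q$. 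Because $\phi$ is nontrivial some $\phi(x_i)\neq 1$, so $\phi(x_i)-1$ is a nonzero element of the skew field $\mathcal Q$; hence $\partial_1\otimes\mathcal Q$ is onto, $H_0(A;\mathcal Q)=0$, and $\dim_{\mathcal Q}H_1(A;\mathcal Q)=m-1$. This gives $\operatorname{rank}_{\Z_pG}H_1(A;\Z_pG)=m-1$; this is the $\Z_p$-analogue of \cite[Proposition~2.11]{Cochran:2002-1}.

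For Part~(2), I would first fix a handle decomposition of the compact $3$-manifold $E(K)$ with a single $0$-handle and no $3$-handle; collapsing, $E(K)$ is homotopy equivalent to a connected $2$-complex $X$ with one $0$-cell, $k$ one-cells and (since $\chi(E(K))=0$) exactly $k-1$ two-cells, and I set $A:=X^{(1)}=\bigvee_k S^1$. The coefficient system of Theorem~\ref{thm:nontrivial}, restricted along $\pi_1 E(K)=\pi_1 X\to\pi_1W\to\Gamma$, is nontrivial (it induces the isomorphism $H_1(E(K))\xrightarrow{\sim}H_1(\Gamma)=\Z$), so the composite $\pi_1A\twoheadrightarrow\pi_1 X\to\Gamma$ is a nontrivial system and Part~(1) applies: $\operatorname{rank}_{\Z_p\Gamma}H_1(A;\Z_p\Gamma)=k-1$, hence $H_1(A;\KK[t^{\pm1}])$ is free of rank $k-1$ over the PID $\KK[t^{\pm1}]$ (a flat Ore localization of $\Z_p\Gamma$). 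The cellular $\Z\pi$-chain complex of $\widetilde X$ is $0\to(\Z\pi)^{k-1}\xrightarrow{d_2}(\Z\pi)^{k}\xrightarrow{d_1}\Z\pi\to 0$, exact except in degree~$1$ because $E(K)$ is aspherical; so $d_2$ is injective and $\ker d_1=\operatorname{im}d_2$ is free of rank $k-1$. Tensoring with $\KK[t^{\pm1}]$ along the system, and using that the higher-order Alexander module of a knot is torsion (so $H_1(E(K);\K)=0$, whence also $H_2(E(K);\K)=0$, and $H_0(E(K);\KK[t^{\pm1}])=0$), I get $H_2(E(K);\KK[t^{\pm1}])=0$ and thus that $H_1(E(K);\KK[t^{\pm1}])$ is the cokernel of an injective $\KK[t^{\pm1}]$-endomorphism of a rank-$(k-1)$ free module, with $\KK$-dimension equal to the $t$-degree of its Dieudonné determinant.

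The second half of the plan is to run the same computation for the abelian coefficient system $\pi_1 E(K)\to\Z=\langle t\rangle$ over the PID $\Z_p[t^{\pm1}]$, and compare. On the abelian side $H_0(E(K);\Z_p[t^{\pm1}])=\Z_p$, and since a Seifert surface for $K$ lifts to the infinite cyclic cover $E_\infty$ its boundary is null-homologous there, so $0$-surgery does not alter $H_1$ of the infinite cyclic cover and $\dim_{\Z_p}H_1(E(K);\Z_p[t^{\pm1}])=\operatorname{rank}_{\Z_p}H_1(E_\infty;\Z_p)=\operatorname{rank}_{\Z_p}H_1(M_\infty;\Z_p)=d$. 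Both $H_1$'s are then produced from the \emph{same} integral boundary matrix $d_2$, pushed along $\Z\pi\to\KK[t^{\pm1}]$ and $\Z\pi\to\Z_p[t^{\pm1}]$; writing each entry as $\sum_l a_l t^l$ with $a_l$ supported on $\Gamma^{(1)}\subset\KK^\times$, the extreme $t$-coefficients are nonzero (hence invertible) in the skew field $\KK$ but may be killed by the augmentation $\Z_p\Gamma^{(1)}\to\Z_p$, so $t$-degrees can only drop upon passing to $\Z_p[t^{\pm1}]$. Carrying this through the Smith/Dieudonné-determinant bookkeeping over $\KK[t^{\pm1}]$ — exactly the Fox-calculus argument of \cite[Sections~3 and 4]{Cochran:2002-1} transplanted to $\Z_p$-coefficients — yields
\[
\dim_{\KK}H_1(E(K);\KK[t^{\pm1}])\ \ge\ \dim_{\Z_p}H_1(E(K);\Z_p[t^{\pm1}])-1\ =\ d-1 ,
\]
the single unit of loss being precisely the discrepancy between $H_0(E(K);\KK[t^{\pm1}])=0$ and $H_0(E(K);\Z_p[t^{\pm1}])=\Z_p$ (equivalently, the differing ranks of $\operatorname{im}d_1$ over the two rings).

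\textbf{Main obstacle.} The hard part is the final comparison step. One must work with the Dieudonné determinant (there is no honest determinant over the noncommutative ring $\KK[t^{\pm1}]$), keep precise track of how the codomain $\ker d_1$ degenerates differently over $\KK[t^{\pm1}]$ and over $\Z_p[t^{\pm1}]$ — this is what produces the $-1$ — and verify that substituting $\Z_p$ for $\Q$ throughout Cochran's arguments introduces nothing new, replacing rational ranks by $\Z_p$-ranks and using only that $\Z_p\Gamma^{(1)}$ remains an Ore domain (Lemma~\ref{lem:quotient field}). The remaining ingredients (the $2$-complex model, asphericity of knot exteriors, flatness of Ore localizations, torsion-ness of higher-order Alexander modules) are standard and routine.
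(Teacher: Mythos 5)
Part~(1) of your proposal is correct and is essentially the paper's argument: you compute the chain complex explicitly and pass to the Ore localization, whereas the paper phrases it via the Euler characteristic $\rank H_0 - \rank H_1 = 1-m$ together with $H_0(A;\K)=0$; these are the same computation.

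Part~(2) is where you and the paper diverge, and where your proposal has a real gap. The paper's argument (following its $\Q$-coefficient predecessor in \cite{Cochran-Kim:2004-1}) does not compare Dieudonn\'e determinant degrees at all. Instead it works directly with the infinite cyclic cover $\widetilde E$ of $E(K)$, chooses a wedge $A$ of $d$ circles mapped into $\widetilde E$ so that $H_1(A;R)\to H_1(\widetilde E;R)$ is injective, and then shows $H_1(A;RG)\to H_1(\widetilde E;RG)$ is injective over $RG$ with $G=[\Gamma,\Gamma]$. The mechanism for that step is precisely Strebel's class $D(R)$: from $H_2(\widetilde E, A;R)=0$ (which follows from $H_2(\widetilde E;R)=0$ and injectivity of $H_1(A;R)\to H_1(\widetilde E;R)$), the relative $RG$-chain map $C_2\to C_1$ is injective because $C_2\otimes_{RG}R\to C_1\otimes_{RG}R$ is, and $G\in D(R)$. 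The inequality $\rank_{RG}H_1(\widetilde E;RG)\ge d-1$ then follows from Part~(1) applied to $A$; the $-1$ appears because a wedge of $d$ circles contributes rank $d-1$, not $d$, over $RG$.

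Your explanation for the $-1$, by contrast, is not correct. You attribute it to ``the discrepancy between $H_0(E(K);\KK[t^{\pm1}])=0$ and $H_0(E(K);\Z_p[t^{\pm1}])=\Z_p$ (equivalently, the differing ranks of $\operatorname{im}d_1$).'' But the ranks of $\operatorname{im}d_1$ do \emph{not} differ: over $\Z_p[t^{\pm1}]$ the cokernel $H_0\cong\Z_p$ is torsion, so $\operatorname{im}d_1$ is still rank~$1$ (it is the augmentation ideal $(t-1)$), and hence $\ker d_1$ has rank $k-1$ over both $\KK[t^{\pm1}]$ and $\Z_p[t^{\pm1}]$. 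So the purported source of the $-1$ evaporates. More fundamentally, your plan asks for a comparison of $t$-degrees of presentation matrices over $\KK[t^{\pm1}]$ and $\Z_p[t^{\pm1}]$ after a change of basis for $\ker d_1$ in each ring separately, and you defer this to ``the Fox-calculus argument of Sections~3 and~4 of \cite{Cochran:2002-1}.'' But Cochran's proof of the corresponding rational inequality (his Corollary~4.8) is \emph{not} a determinant-degree comparison — it is the same embedded-subcomplex argument the paper uses, where the $-1$ comes from $\rank H_1(\bigvee_d S^1;RG)=d-1$. So the hard step you isolate at the end is not ``transplanting Cochran,'' it is a new argument you would need to supply, and as formulated (entry-wise degrees dropping under the augmentation $\Z_p\Gamma^{(1)}\to\Z_p$) it would heuristically yield $\ge d$, not $\ge d-1$, which is generally false (e.g.\ for the figure-eight knot $d=2$ but the higher-order degree is $1$). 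I would recommend switching to the paper's subcomplex approach: it is shorter, and Strebel's property is exactly the tool that lets you promote the mod-$p$ injectivity $C_2\otimes R\hookrightarrow C_1\otimes R$ to injectivity over $RG$.
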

\begin{proof}
	Note that $A$ is a finite 1-complex with the Euler characteristic $1-m$. By the invariance of the Euler characteristic, we have $\rank_{RG} H_0(A;RG)-\rank_{RG}H_1(A;RG)=1-m$. Since the homomorphism $\pi_1A\to G$ is nontrivial, for the (skew) quotient field $\KK$ of $RG$ and the map $\phi\colon \pi_1A\to G\to \KK$ we have 
	\[
	H_0(A;\KK)=\KK/\{\phi(g)a-a\,\mid\,g\in \pi_1A, a\in \KK\}=0.
	\]
	Now we have $\rank_{RG} H_0(A;RG)=0$ and $\rank_{RG}H_1(A;RG)=m-1$. This proves (1).

	Let $\tilde{E}$ be the infinite cyclic cover of $E(K)$ and let $G:=[\Gamma,\Gamma]$ (therefore $\KK$ is the quotient field of $R[\Gamma,\Gamma]$). Then $H_1(E(K);\KK[t^{\pm 1}]) \cong H_1(\tilde{E};\KK)$ and $d=\rank_RH_1(\tilde{E};R)$. Since $\KK$ is the (skew) quotient field of $RG$, we have $\rank_\KK H_1(\tilde{E};\KK) = \rank_{RG} H_1(\tilde{E};RG)$, and it suffices to show that $\rank_{RG} H_1(\tilde{E};RG)\ge d-1$.
	
	Let $A$ be a wedge of $d$ circles and let $j\colon A\to \tilde{E}$ be a map which induces a monomorphism on $H_1(A;R)$. For convenience we identify $A$ with $j(A)$. Since $\tilde{E}$ has the homotopy type of a 2-complex, we may assume $(\tilde{E}, A)$ is a 2-complex. Let $C_2\to C_1\to C_0$ be the free $RG$-chain complex obtained from the cell structure of the $G$-cover of $(\tilde{E},A)$.
	
	Since $H_2(\tilde{E};R)=0$ and $H_1(A;R)\to H_1(\tilde{E};R)$ is injective, we have $H_2(\tilde{E},A;R)=0$. Therefore $C_2\otimes_{RG}R\to C_1\otimes_{RG}R$ is injective. Since PTFA groups lie in Strebel's class $D(R)$ \cite{Strebel:1974-1}, the group $G$ is in $D(R)$. Therefore $C_2\to C_1$ is also injective, and hence $H_2(\tilde{E},A;RG)=0$. Then, it follows that the map $H_1(A;RG)\to H_1(\tilde{E};RG)$ is injective. Now $\rank_{RG}H_1(\tilde{E};RG)\ge \rank_{RG}H_1(A;RG)=d-1$ where the equality holds by Part (1).	
\end{proof}

Now we are ready to give a proof of Theorem~\ref{thm:nontrivial}.

\begin{proof}[Proof of Theorem~\ref{thm:nontrivial}]
	For $R=\Z_p$, the proof is the same as the proof of \cite[Theorem 3.8]{Cochran-Kim:2004-1} (for the case $R=\Q$) except for the following changes: noticing that an $n$-cylinder is a $\Z_p$-coefficient $n$-cylinder by Proposition~\ref{prop:implication of cylinder}, replace \cite[Theorem 2.13]{Cochran-Orr-Teichner:1999-1}, \cite[Proposition 3.6]{Cochran-Kim:2004-1}, and the coefficients $\Q$ by Theorem~\ref{thm:Blanchfield}, Proposition~\ref{prop:self-annihilating}, and the coefficients $\Z_p$, respectively. One also needs to replace \cite[Corollary 4.8]{Cochran:2002-1} by Lemma~\ref{lem:rank}~(2).
\end{proof}

\subsection{Algebraic $n$-solutions}\label{subsec:algebraic n-solution}
In this section, we introduce the notion of {\it $R$-algebraic $n$-solution} where $R=\Q$ or $\Z_p$, and using it we prove Theorem~\ref{thm:eta_i}. The notion of an algebraic $n$-solutoin was introduced by Cochran and Teichner in \cite{Cochran-Teichner:2003-1} and later generalized by Cochran and the author in \cite[Section 6]{Cochran-Kim:2004-1}. The new notion of an $R$-algebraic $n$-solution generalizes an algebraic $n$-solution to the case of $\Z_p$ coefficients. An $R$-algebraic $n$-solution may be considered as an algebraic abstraction of an $n$-cylinder. 

All the results in this section are based on the ideas and results in \cite{Cochran-Teichner:2003-1} and \cite{Cochran-Kim:2004-1} which deal with the case $R=\Q$. In fact, the results in this section are mainly a $\Z_p$-coefficient version of the corresponding result with $\Q$ coefficients in \cite[Section 6]{Cochran-Kim:2004-1}. But since Section~6 in \cite{Cochran-Kim:2004-1} is quite technical, we rewrite the theorems with $\Z_p$ coefficients carefully and completely, and clarify how one should modify the proofs of the corresponding theorems in \cite{Cochran-Kim:2004-1}. 

For a group $G$, let $G_k:=G/G^{(k)}_r$ where $G^{(k)}_r$ is the $k$-th rational derived group of $G$. Since $G_k$ is a PTFA group, $\Z G_k$ (and $\Q G_k$) embeds into the (skew) quotient field which we denote by $\KK(G_k)$. By Lemma~\ref{lem:quotient field}, the group ring $\Z_pG_k$ also embeds into the (skew) quotient field, and we also denote it by $\KK(G_k)$, or $\KK_p(G_K)$ to emphasize the coefficients $\Z_p$. We write $\KK$ for $\KK(G_k)$ when it is understood from the context. 

The following is a generalization of an algebraic $n$-solution defined in \cite[Definition 6.1]{Cochran-Teichner:2003-1} and \cite[Definition 6.1]{Cochran-Kim:2004-1}.

\begin{definition}\label{def:algebraic n-solution}(\cite[Definition 6.1]{Cochran-Kim:2004-1} for $R=\Q$) Let $R=\Q$ or $\Z_p$ and let $\KK(G_k)$ be the (skew) quotient field of $RG_k$. Let $S$ be a group with $H_1(S;R)\ne 0$. Let $F$ be a free group of rank $2g$ and let $i\colon F\to S$ be a homomorphism. A nontrivial homomorphism $r\colon S\to G$ is an {\it $R$-algebraic $n$-solution} for $i\colon F\to S$ if the following hold:
	\begin{enumerate}
		\item For each $0\le k\le n-1$, the map $r_*\colon H_1(S;\KK(G_k))\to H_1(G;\KK(G_k))$ is nontrivial.
		\item For each $0\le k\le n-1$, the map $i_*\colon H_1(F;\KK(G_k))\to H_1(S;\KK(G_k))$ is surjective. 
	\end{enumerate}
	A $\Q$-algebraic $n$-solution is also called an {\it algebraic $n$-solution}. 
\end{definition}

\begin{remark}\label{rem:algebraic n-solution}
	\begin{enumerate}	
		\item For $k<n$, an $R$-algebraic $n$-solution is an $R$-algebraic $k$-solution. 
		\item Since $\KK(G_k)$ is a flat $RG_k$-module \cite[Proposition II.3.5]{Stenstrom:1975}, we have $H_1(-;\KK(G_k))\cong H_1(-;RG_k)\otimes_{RG_k}\KK(G_k)$.
		\item We have $H_1(G;RG_k)\cong H_1(G;\Z G_k)\otimes_\Z R\cong G^{(k)}_r/[G^{(k)}_r,G^{(k)}_r]\otimes_\Z R$.
		\item The case $k=0$ in the condition (2) implies that $H_1(F;R)\to H_1(S;R)$ is surjective.
	\end{enumerate}
\end{remark}

The following proposition shows the relationship between $n$-cylinders and $R$-algebraic $n$-solutions. Roughly speaking, for a knot $K$ and an $n$-cylinder $W$ for $M(K)$, the homomorphism on the fundamental groups of the infinite cyclic covers of $M(K)$ and $W$ induced from the inclusion gives rise to an $R$-algebraic $n$-solution. 

\begin{proposition}[{\cite[Proposition 6.3]{Cochran-Kim:2004-1} for $R=\Q$}]
\label{prop:n-cylinder and n-solution}
	Let $n\ge 1$. Let $K$ be a knot with nontrivial Alexander polynomial $\Delta_K(t)$. Suppose the degree of $\Delta_K(t)$ is greater than 2 if $n>1$. Let $W$ be an $n$-cylinder with $M(K)$ as one of its boundary components. Let $\Sigma$ be a capped-off Seifert surface for $K$. Let $S:=\pi_1M(K)^{(1)}$ and $G:=\pi_1W^{(1)}$. Let $R:=\Q$ or $\Z_p$ where $p$ is a prime greater than the top coefficient of $\Delta_K(t)$. Let $F$ be a free group of rank $2g$ where $g$ is the genus of $\Sigma$ and let $F\to \pi_1(M(K)\setminus \Sigma)$ be a homomorphism inducing an isomorphism on $H_1(F;R)$. Let $i$ be the composition $F\to \pi_1(M(K)\setminus \Sigma)\to S$. Then, the inclusion-induced map $j\colon S\to G$ is an $R$-algebraic $n$-solution for $i\colon F\to S$. 
\end{proposition}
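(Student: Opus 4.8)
The plan is to follow the strategy of \cite[Proposition~6.3]{Cochran-Kim:2004-1} verbatim, replacing each appeal to a $\Q$-coefficient result by its $\Z_p$-coefficient counterpart established earlier in this section. Set $R=\Q$ or $\Z_p$ with $p$ greater than the top coefficient of $\Delta_K(t)$ (so that, by the remark following Theorem~\ref{thm:nontrivial}, $\rank_R H_1(M_\infty;R)=\deg\Delta_K(t)=:d$, and also $\rank_R H_1(\tilde E;R)=d$ for $E=E(K)$). We must verify conditions (1) and (2) of Definition~\ref{def:algebraic n-solution} for the inclusion-induced map $j\colon S\to G$, where $S=\pi_1 M(K)^{(1)}$, $G=\pi_1 W^{(1)}$, and $i\colon F\to S$ is the composite $F\to \pi_1(M(K)\setminus\Sigma)\to S$.

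First I would fix $0\le k\le n-1$ and abbreviate $\KK=\KK(G_k)$, the skew quotient field of $RG_k$ (which exists by Lemma~\ref{lem:quotient field}, since $G_k=G/G_r^{(k)}$ is PTFA). For condition~(2): since $H_1(-;\KK)\cong H_1(-;RG_k)\otimes_{RG_k}\KK$ by flatness (Remark~\ref{rem:algebraic n-solution}(2)), it suffices to show $i_*\colon H_1(F;RG_k)\to H_1(S;RG_k)$ is surjective after tensoring with $\KK$. This is purely a statement about $M(K)$ and its Seifert surface: the complement $M(K)\setminus\Sigma$ has the homotopy type of a wedge built from $\Sigma$, and the Mayer--Vietoris argument of \cite[Proposition~6.3]{Cochran-Kim:2004-1}, which decomposes the infinite cyclic cover of $M(K)$ into copies of the cover of $M(K)\setminus\Sigma$, carries over once one knows the relevant homology groups have the expected $R$-ranks; the latter is exactly the content of the rank computations in Lemma~\ref{lem:rank}, applied over $RG_k$ rather than just over the infinite cyclic cover. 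For condition~(1): the map $r_*=j_*\colon H_1(S;\KK)\to H_1(G;\KK)$ must be shown nontrivial. Passing to infinite cyclic covers, $H_1(S;\KK(G_k))$ is controlled by $H_1(M(K);\KK(G_k)[t^{\pm1}])$ and likewise for $W$, and nontriviality of the inclusion-induced map between these is precisely what Theorem~\ref{thm:nontrivial} delivers: with $\Gamma=G_{k+1}$ (a PTFA group with $\Gamma^{(k+1)}=\{e\}$, hence the hypothesis $\Gamma^{(n)}=\{e\}$ of that theorem holds after relabeling) and $d>2$ when $n>1$ (so $k\ge 1$ forces $n>1$, and the bound $(d-2)/2>0$ applies; when $k=0$ one uses the $n=1$ case with bound $d/2>0$), the image of $i_*$ on first $\KK[t^{\pm1}]$-homology has positive $\KK$-rank, which forces $j_*$ nontrivial on the corresponding $H_1(-;\KK(G_k))$.

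The translation from $H_1$ over $\KK(G_k)[t^{\pm1}]$ back to $H_1$ over $\KK(G_k)$ is the routine bookkeeping step: $G_k$ surjects onto $H_1(G)\cong\Z=\langle t\rangle$ with kernel whose rational/$\Z_p$ group ring localizes to a division ring $\KK(G_{k})$-analogue, and one identifies $\KK(G_k)\cong\KK'[t^{\pm1}]$-modules appropriately; this is done in \cite{Cochran-Kim:2004-1} for $R=\Q$ and the same manipulation is valid for $R=\Z_p$ because $\Z_p$ is a field, so all the group rings involved remain Ore domains by Lemma~\ref{lem:quotient field}. I would therefore state explicitly which lemma of \cite{Cochran-Kim:2004-1} is being invoked at each point and note the single substitution to be made: \cite[Theorem~2.13]{Cochran-Orr-Teichner:1999-1} $\rightsquigarrow$ Theorem~\ref{thm:Blanchfield}, \cite[Proposition~3.6]{Cochran-Kim:2004-1} $\rightsquigarrow$ Proposition~\ref{prop:self-annihilating}, \cite[Corollary~4.8]{Cochran:2002-1} $\rightsquigarrow$ Lemma~\ref{lem:rank}(2), \cite[Theorem~3.8]{Cochran-Kim:2004-1} $\rightsquigarrow$ Theorem~\ref{thm:nontrivial}, and $\Q\rightsquigarrow\Z_p$ throughout.

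The main obstacle I anticipate is not any single deep step but ensuring the degree hypothesis is used correctly across the range of $k$: the condition $\deg\Delta_K(t)>2$ is needed precisely so that Theorem~\ref{thm:nontrivial} gives a \emph{nonzero} lower bound $(d-2)/2$ in the case $n>1$, and one must be careful that for $k=0$ one is in the "$n=1$" regime of that theorem (bound $d/2$, which only needs $d>0$, i.e.\ $\Delta_K(t)$ nontrivial), while for $1\le k\le n-1$ one genuinely needs $d>2$. A secondary point requiring care is checking that $j\colon S\to G$ is indeed a \emph{nontrivial} homomorphism as required by Definition~\ref{def:algebraic n-solution}, which follows since $W$ is an $n$-cylinder with $n\ge 1$ so $H_1(W)\cong\Z$ and the derived subgroup $G=\pi_1W^{(1)}$ maps onto a group with the expected rational homology; once conditions (1)--(2) are verified for $k=0$ this nontriviality is automatic.
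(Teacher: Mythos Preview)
Your overall plan---follow \cite[Proposition~6.3]{Cochran-Kim:2004-1} and swap in $\Z_p$-coefficient ingredients---matches the paper's, and your treatment of condition~(1) via Theorem~\ref{thm:nontrivial} is correct in spirit (though the group you want is $\Gamma=\pi_1W/(\pi_1W)^{(k+1)}_r$, so that $[\Gamma,\Gamma]=G_k$ and $\KK$ in that theorem is the quotient field of $RG_k$; writing ``$\Gamma=G_{k+1}$'' is not quite right since $G_{k+1}$ is a quotient of $\pi_1W^{(1)}$, not of $\pi_1W$, and does not have $H_1\cong\Z$).

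There is, however, a genuine gap in your handling of condition~(2). You say the Mayer--Vietoris argument ``carries over once one knows the relevant homology groups have the expected $R$-ranks,'' citing Lemma~\ref{lem:rank}. But the actual mechanism for passing from surjectivity with $R$ coefficients to surjectivity with $\KK(G_k)$ coefficients is the Strebel property: one realizes $F\to\pi_1(M(K)\setminus\Sigma)$ by a map of spaces that is $1$-connected on $\Z_p$-homology, and then invokes the $\Z_p$-analogue of \cite[Proposition~2.10]{Cochran-Orr-Teichner:1999-1}, which holds because $G_k$ lies in Strebel's class $D(\Z_p)$ (Lemma~\ref{lem:amenable and D(R)}). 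This is the paper's item~(5), and it is the step you have not supplied. The paper also notes (item~(6)) that Harvey's rank computations from \cite{Harvey:2005-1} are used and go through unchanged over $\Z_pG_k$; you do not mention this either.

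Finally, your substitution list is misplaced: the replacements \cite[Theorem~2.13]{Cochran-Orr-Teichner:1999-1}~$\rightsquigarrow$~Theorem~\ref{thm:Blanchfield}, \cite[Proposition~3.6]{Cochran-Kim:2004-1}~$\rightsquigarrow$~Proposition~\ref{prop:self-annihilating}, and \cite[Corollary~4.8]{Cochran:2002-1}~$\rightsquigarrow$~Lemma~\ref{lem:rank}(2) are the modifications needed in the proof of Theorem~\ref{thm:nontrivial}, not in the proof of the present proposition. Here the only substitution beyond changing coefficients is \cite[Theorem~3.8]{Cochran-Kim:2004-1}~$\rightsquigarrow$~Theorem~\ref{thm:nontrivial} for condition~(1), together with the Strebel/Harvey inputs above for condition~(2).
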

\begin{proof}
	For $R=\Z_p$, modify the proof of Proposition~6.3 in \cite{Cochran-Kim:2004-1} (for the case $R=\Q$) as follows: 
	\begin{enumerate}
		\item Replace the coefficients $\Z$ by $\Z_p$ and let $\KK$ be the (skew) quotient field of $\Z_p\Gamma^{(1)}=\Z_pG_k$.
		\item Replace \cite[Theorem 3.8]{Cochran-Kim:2004-1} by Theorem~3.5 in this paper. 
		\item In the proof of \cite[Proposition~6.3]{Cochran-Kim:2004-1}, it was given that $d:=\rank_\Q H_1(M_\infty;\Q)$ where $M_\infty$ is the infinite cyclic cover of $M(K)$, and it was used that $d$ is equal to the degree of $\Delta_K(t)$. In our case with $\Z_p$ coefficients, we set $d:=\rank_{\Z_p} H_1(M_\infty;\Z_p)$. Note that $d$ is still equal to the degree of $\Delta_K(t)$ since $p$ is greater than the top coefficient of $\Delta_K(t)$ by our hypothesis. 
		\item To establish Property (2) in Definition~\ref{def:algebraic n-solution}, choose a map $W\to Y=M(K)\setminus \Sigma$ inducing $F\to \pi_1Y$ that is 1-connected on homology with $\Z_p$ coefficients.
		\item Use the $\Z_p$-coefficient version of \cite[Proposition 2.10]{Cochran-Orr-Teichner:1999-1} to show that the map $F\to \pi_1Y$ induces a 1-connected map on homology with $\KK(G_k)$ coefficients: Proposition~2.10 in \cite{Cochran-Orr-Teichner:1999-1} holds with $\Z_p$ coefficients since $G_k$ lies in Strebel's class $D(\Z_p)$ (see Lemma~\ref{lem:amenable and D(R)}).
		\item The proof of \cite[Proposition~6.3]{Cochran-Kim:2004-1} uses Harvey's work in \cite{Harvey:2005-1} with the (skew) quotient field $\KK$ of $\Z G_k$. We can still use Harvey's work in the same way for the case of $\Z_p$ coefficients with our $\KK=\KK_p(G_k)$, the (skew) quotient field of $\Z_p G_k$.
\end{enumerate}
\end{proof}
\noindent We note that in the proofs of Proposition~\ref{prop:n-cylinder and n-solution} and \cite[Proposition 6.3]{Cochran-Kim:2004-1}, it is implicitly proved that the map $i_*\colon H_1(F;\KK(G_k))\to H_1(S;\KK(G_k))$ is surjective for all $k\ge 0$.

By Remark~\ref{rem:algebraic n-solution}~(3), for a group $S$ and $R=\Q$ or $\Z_p$, an element of $S^{(n)}$ can be considered as an element of $H_1(S;RS_n)$. 

\begin{theorem}[{\cite[Theorem 6.4]{Cochran-Kim:2004-1} for $R=\Q$ only}]
\label{thm:special tuple}
 Let $F$ be a free group of rank $2g$ and let $i\colon F\to S$ be a group homomorphism. For each $n\ge 0$, there exists a finite collection $\cP_n$ of sets of $2g-1$ (if $n>0$) or $2g$ (if $n=0$) elements of $F^{(n)}$ which satisfies the following:
	
	If $r\colon S\to G$ is an $R$-algebraic $n$-solution for $i\colon F\to S$ for both $R=\Q$ and $R=\Z_p$, then there is an element in $\cP_n$ which maps to a generating set of $H_1(S;\KK(G_n))$ under the composition 
	\[
	F^{(n)}\to S^{(n)}\to H_1(S;RS_n)\xrightarrow{r_*}H_1(S;RG_n)\to H_1(S;\KK(G_n)).
	\]
	for both $R=\Q$ and $R=\Z_p$.  
\end{theorem}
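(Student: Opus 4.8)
The plan is to follow the proof of \cite[Theorem~6.4]{Cochran-Kim:2004-1}, which treats the case $R=\Q$, and to run it with $\Z_p$-coefficients alongside the $\Q$-coefficients. First I would build the collection $\cP_n$ by induction on $n$, the point being that the construction can be made \emph{purely combinatorial in the free group $F$}: it outputs a finite list of tuples of words in $F$ (built from a fixed free basis, its iterated commutators, and a bounded amount of bookkeeping recording how $i\colon F\to S$ acts on successive terms of the derived series), and it refers neither to $G$, to $r$, nor to the coefficient ring $R$. For $n=0$ one lets $\cP_0$ be a free basis of $F$. For the inductive step, one writes a general element of $F^{(n+1)}=[F^{(n)},F^{(n)}]$ as a product of conjugates of commutators of the entries of the tuples in $\cP_n$, enumerates the finitely many ``shapes'' of such products that survive in the relevant metabelian-type quotients, and lets $\cP_{n+1}$ be the resulting finite set of $(2g-1)$-tuples (the drop from $2g$ to $2g-1$ reflecting that the coefficient system is nontrivial once $n>0$, cf.\ Lemma~\ref{lem:rank}(1)). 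Since this recipe does not see $R$, it is meaningful to ask one member of $\cP_n$ to serve both coefficient systems.

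Granting the construction, for each \emph{fixed} $R\in\{\Q,\Z_p\}$ the argument that some tuple of $\cP_n$ maps onto a generating set of $H_1(S;\KK_R(G_n))$ proceeds exactly as in \cite[Theorem~6.4]{Cochran-Kim:2004-1}, provided each $\Q$-coefficient tool is replaced by its $\Z_p$-analog already recorded earlier: the Ore localization and skew quotient field $\KK_p(G_k)$ of $\Z_p G_k$ (Lemma~\ref{lem:quotient field}); the flatness of $\KK_p(G_k)$ over $\Z_p G_k$ and the identification $H_1(-;\KK(G_k))\cong H_1(-;\Z_p G_k)\otimes_{\Z_p G_k}\KK(G_k)$ (Remark~\ref{rem:algebraic n-solution}); the fact that PTFA groups lie in Strebel's class $D(\Z_p)$ \cite{Strebel:1974-1}, which drives the Dwyer-type $1$-connectedness steps just as the $\Z_p$-form of \cite[Proposition~2.10]{Cochran-Orr-Teichner:1999-1} does in the proof of Proposition~\ref{prop:n-cylinder and n-solution}; the rank equality $\rank_{RG}H_1(A;RG)=m-1$ of Lemma~\ref{lem:rank}(1); and the nontriviality statements of Subsection~\ref{subsec:Blanchfield form} together with Proposition~\ref{prop:n-cylinder and n-solution}. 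Substituting these line by line into \cite{Cochran-Kim:2004-1} settles the statement for each $R$ separately.

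The genuinely new point, which I expect to be the main obstacle, is to produce one tuple of $\cP_n$ that works for $R=\Q$ and $R=\Z_p$ \emph{at once}, since the two separate runs might a priori select different tuples. The plan is to carry out the inductive descent through the rational derived series of $S$ with both coefficient systems tracked in parallel. At each level $k<n$ the hypothesis that $r$ is an $R$-algebraic $n$-solution for \emph{both} $R$ supplies simultaneously the surjectivity of $i_*\colon H_1(F;\KK_R(G_k))\to H_1(S;\KK_R(G_k))$ and the nontriviality of $r_*$ over each field, so the structural input that the descent consumes is available for $\Q$ and $\Z_p$ together. Since for each $k$ the space $H_1(S;\KK_R(G_k))$ is obtained from the single $\Z G_k$-module $H_1(S;\Z G_k)\cong S^{(k)}_r/[S^{(k)}_r,S^{(k)}_r]$ by tensoring with $R$ and Ore-localizing, a candidate tuple is admissible over $\KK_R(G_k)$ exactly when a corresponding minor, read off from that integral module, is nonzero in $\KK_R(G_k)$; the plan is to use the simultaneous surjectivity to guarantee that admissible tuples exist over each field and then a genericity/counting argument to force a tuple admissible over both, propagating this common choice from level to level. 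Making this bookkeeping precise — in particular verifying that the choices forced at successive levels stay jointly compatible for $\Q$ and $\Z_p$ — is the delicate part; once it is done, reading off which member of $\cP_n$ the coherent descent selected gives the required single tuple.
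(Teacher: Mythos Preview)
Your overall plan matches the paper's: construct $\cP_n$ combinatorially (independent of $R$ and $G$), invoke \cite[Theorem~6.4]{Cochran-Kim:2004-1} for $R=\Q$, and then handle $R=\Z_p$ by substituting the $\Z_p$-analogues you list. You also correctly isolate the main difficulty as producing a \emph{single} tuple good for both coefficient rings simultaneously.

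However, your proposed resolution of that difficulty---a ``genericity/counting argument'' to force a common tuple, with parallel bookkeeping at each level---is not what the paper does, and it is not clear such an argument can be made to work: there is no a priori bound ensuring that the tuples bad over $\Q$ together with those bad over $\Z_p$ fail to exhaust $\cP_n$. The paper's mechanism is more direct: it proves that the \emph{very tuple} produced by the $\Q$-argument of \cite{Cochran-Kim:2004-1} already works over $\Z_p$. This is the content of Lemma~\ref{lem:good tuple}, a Fox-calculus statement replacing \cite[Lemma~6.5]{Cochran-Kim:2004-1}: one shows that the vectors of Fox derivatives $(d_1(w_i),\ldots,d_{2g-1}(w_i))$ of the tuple $\{w_1,\ldots,w_{2g-1}\}$ selected via the $\Q$-algebraic-$n$-solution hypothesis are right linearly independent not only over $\Q G_n$ but also over $\Z_p G_n$, for every prime $p$. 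The inductive step checks that the specific scaling elements $(r\circ\pi_{k+1})(t_i)$ appearing in \cite{Cochran-Kim:2004-1} are nonzero in $\Z_p G_{k+1}$ (not merely in $\Z G_{k+1}$), and uses that the torsion-free abelian kernel $H=\Ker\{G_{k+1}\to G_k\}$ lies in Strebel's class $D(\Z_p)$ to propagate linear independence from $\Z_p G_k$ to $\Z_p G_{k+1}$. In particular the tuple is chosen using only the $\Q$-hypothesis; the $\Z_p$-algebraic-$n$-solution hypothesis enters only at the final step, to obtain surjectivity of $H_1(F;\KK_p(G_n))\to H_1(S;\KK_p(G_n))$. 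So no level-by-level joint compatibility needs to be tracked, and the delicate bookkeeping you anticipate does not arise.
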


An element of $\cP_n$ is called an (unordered) {\it tuple}, and a tuple mapped to a generating set of $H_1(S;\KK(G_n))$ in the above theorem is called a {\it special tuple}. In the proof of Theorem~\ref{thm:eta_i}, the $\eta_i$ will be defined as the image of the elements of the tuples in $\cP_{n-1}$ under the homomorphism $i\colon F\to S$, which is a finite set, and the existence of a special tuple will give us the existence of the desired curve $\eta_i$. 

\begin{proof}[Proof of Theorem~\ref{thm:special tuple}]
	Suppose $x_1,x_2,\ldots, x_{2g}$ generate $F$. Let $\cP_0=\{\{x_1,x_2,\ldots, x_{2g}\}\}$. When $n=0$, the above composition becomes $F\to H_1(S;R)$ and the theorem follows since $H_1(F;R)\to H_1(S;R)$ is surjective by  Remark~\ref{rem:algebraic n-solution}~(4) and $x_i$ generate $H_1(F;R)\cong R^{2g}$.
	
	Now assume $n\ge 1$. We define $\cP_n$ inductively as in the proof of \cite[Theorem 6.4]{Cochran-Kim:2004-1}. Define 
	\[\cP_1:=\{\{[x_i,x_1],\dots,
	[x_i,x_{i-1}],[x_i,x_{i+1}],\ldots,[x_i,x_{2g}]\}\,\,|\,\,1\le i
	\le 2g\},
	\] 
	which is a set of $(2g-1)$-tuples. Suppose $\cP_k$ has been constructed for $k\ge 1$. We define $\cP_{k+1}$ as follows: a $(2g-1)$-tuple $\{z_1,...,z_{2g-1}\}$ is in $\cP_{k+1}$ if and only if there is $\{w_1,...,w_{2g-1}\}\in \cP_k$ such that for each $1\le i\le 2g-1$, $z_i=[w_i,w_i^{x_j}]$ for some $j$ with $1\le j\ne i\le 2g$ or $z_i=[w_i,w_k]$ for some $k$ with $1\le k\ne i\le 2g-1$. Here $w_i^{x_j}$ denotes $x_j^{-1}w_ix_j$.
	
	By \cite[Theorem 6.4]{Cochran-Kim:2004-1} the conclusion of the theorem holds when $R=\Q$. That is, there is an element in $\cP_n$ mapping to a generating set of $H_1(S;\KK(G_n))$ where $\KK(G_n)$ is the (skew) quotient field of $\Q G_n$.
	
	We assert that the above element in $\cP_n$ also maps to a generating set of $H_1(S;\KK(G_n))$ when $\KK(G_n)=\KK_p(G_n)$, the (skew) quotient field of $\Z_pG_n$. This is proved by modifying the proof of Theorem~6.4 in \cite{Cochran-Kim:2004-1} as follows: 
	\begin{enumerate}
		\item Replace the group rings over $\Z$ coefficients by the corresponding group rings over $\Z_p$ coefficients. For example, change $\Z F_n$ to $\Z_p F_n$. 
		\item Let $\KK(G_n)$ denote the (skew) quotient field of $\Z_pG_n$.  
		\item Use Lemma~\ref{lem:rank}~(1) in this paper instead of \cite[Lemma 3.9]{Cochran:2002-1}.
		\item Use Lemma~\ref{lem:good tuple} below instead of \cite[Lemma 6.5]{Cochran-Kim:2004-1}. Note that we need the hypothesis that $r\colon S \to G$ is a ($\Q$-)algebraic $n$-solution in Lemma~\ref{lem:good tuple}. 
		\item In the last part of the proof, one needs to use that $H_1(F;\Z_pG_n)\to H_1(S:\Z_pG_n)$ is surjective after tensoring with $\KK_p(G_n)$. Note that this is where we use the assumption that $r\colon S\to G$ is a $\Z_p$-algebraic $n$-solution.
	\end{enumerate}
\end{proof} 

Let $R$ be a ring. For each $1\le i\le 2g$, let $\partial_i;F\to \Z F\to RF$ be the Fox free differential calculus defined by $\partial_i (x_j)=\delta_{ij}$ and $\partial _i(gh) = \partial_i g + (\partial_i h)g^{-1}$. For $k\ge 0$, denote by $\pi_k$ the quotient map $RF\to RF_k$. By abuse of notation, denote by $r$ the map $RF_k\to RG_k$ induced from $r\circ i\colon F\to S\to G$. We denote by $d_i^k$ the composition $r\circ\pi_k\circ \partial_i\colon F\to RG_k$ for $1\le i\le 2g$. We simply write $d_i$ for $d_i^k$ when the superscript is understood from the context.
 
\begin{lemma}[{\cite[Lemma 6.5]{Cochran-Kim:2004-1}) for $R=\Q$}]
\label{lem:good tuple}
 Let $n\ge 1$. Suppose we are given an algebraic $n$-solution $r\colon S\to G$ for $i\colon F\to S$. Then, there exists a $(2g-1)$-tuple $\{w_1,\ldots,w_{2g-1}\}\in \cP_n$ such that the $2g-1$ vectors $\{(d_1(w_i),d_2(w_i),\ldots, d_{2g-1}(w_i))\,\mid\, 1\le i\le 2g-1\}$ in $(RG_n)^{2g-1}$ are right linearly independent over $RG_n$ for $R=\Q$ and $R=\Z_p$ for all prime $p$.
\end{lemma}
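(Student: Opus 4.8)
The plan is to deduce Lemma~\ref{lem:good tuple} from Theorem~\ref{thm:special tuple}, or rather to establish it as the inductive engine driving that theorem, by translating the statement about generators of $H_1(S;\KK(G_n))$ into a statement about the Fox derivative matrix $(d_i(w_j))$. The starting point is the standard identification: for a free group $F$ on $x_1,\dots,x_{2g}$ and the chain complex of the universal cover of a wedge of $2g$ circles, one has a presentation of $H_1(F;RG_n)$ in which an element $w\in F^{(n)}$ (viewed, via Remark~\ref{rem:algebraic n-solution}(3), as an element of $H_1(S;RS_n)$ pushed forward to $H_1(S;RG_n)$) is represented by the row vector $(d_1(w),\dots,d_{2g}(w))\in (RG_n)^{2g}$, where $d_i = r\circ\pi_n\circ\partial_i$. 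So "a tuple maps to a generating set of $H_1(S;\KK(G_n))$" becomes, after tensoring with $\KK(G_n)$, the assertion that the corresponding Fox derivative vectors span the image of $H_1(F;\KK(G_n))$ in $H_1(S;\KK(G_n))$ — and by Property (2) of an $R$-algebraic $n$-solution (Definition~\ref{def:algebraic n-solution}), that image is all of $H_1(S;\KK(G_n))$. By Lemma~\ref{lem:rank}(1) applied to $G_n$, $H_1(F;\KK(G_n))$ has $\KK(G_n)$-dimension $2g-1$ (the wedge of $2g$ circles, nontrivial coefficient system), so a generating set of minimal size is a basis, and spanning is equivalent to right-linear independence of the $2g-1$ vectors.

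The argument then proceeds by induction on $n$, following the proof of \cite[Lemma~6.5]{Cochran-Kim:2004-1}. For $n=1$ one works with the explicit $(2g-1)$-tuples in $\cP_1$, namely $\{[x_i,x_1],\dots,\widehat{[x_i,x_i]},\dots,[x_i,x_{2g}]\}$; one computes the Fox derivatives of a commutator, $\partial_j[a,b] = \partial_j a + (\partial_j b) a^{-1} - (\partial_j a) b^{-1} a^{-1}b - \dots$ (the usual commutator formula), pushes to $RG_1$, and checks that for at least one choice of $i$ the resulting matrix is invertible over $\KK(G_1)$ — this is where Property (1) of the algebraic $1$-solution is used, to guarantee that $r_*$ is nontrivial so that some $d_i$ is nonzero. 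For the inductive step, given a good $(2g-1)$-tuple $\{w_1,\dots,w_{2g-1}\}\in \cP_k$ one forms candidates in $\cP_{k+1}$ by the recipe $z_i = [w_i, w_i^{x_j}]$ or $z_i = [w_i,w_\ell]$, computes their Fox derivatives modulo the next derived quotient, and shows that right-linear independence over $RG_k$ of the level-$k$ vectors, together with the algebraic $(k+1)$-solution hypotheses, forces right-linear independence over $RG_{k+1}$ of the level-$(k+1)$ vectors for an appropriate choice among these candidates. The key new input over \cite{Cochran-Kim:2004-1} is simply that every ingredient — Lemma~\ref{lem:rank}(1), the flatness of $\KK(G_k)$ over $RG_k$ (Remark~\ref{rem:algebraic n-solution}(2)), and the Ore embedding of Lemma~\ref{lem:quotient field} — is available uniformly for $R=\Q$ and $R=\Z_p$, so the same computation runs verbatim in both cases.

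The main obstacle, I expect, is bookkeeping rather than conceptual: one must track simultaneously the two coefficient rings $\Q$ and $\Z_p$ and verify that the \emph{same} tuple works for both. The hypothesis of Lemma~\ref{lem:good tuple} only assumes a $\Q$-algebraic $n$-solution, yet the conclusion demands linear independence over both $\Q G_n$ and $\Z_p G_n$; the resolution is that once a specific tuple from the finite combinatorial set $\cP_n$ is selected by the $\Q$-argument, the resulting Fox derivative identities are honest equalities in $\Z F$ (Fox calculus is integral), so they reduce mod $p$, and one only needs that the reduced vectors remain independent — which follows because the relevant determinant, computed in $\Z G_n$, augments to a unit (its image under $\Z G_n \to \Z$ is $\pm 1$, since commutators augment to $1$ and the $n=0$ base is the identity), hence is nonzero in $\KK_p(G_n)$ for every $p$. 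That augmentation-to-a-unit observation — the analogue of the remark in the proof of Theorem~\ref{thm:refined_main_theorem-2} that $\Delta(1)=1$ forces $\Delta(\mu_1)\neq 0$ in $\KK[t^{\pm 1}]$ — is the load-bearing point, and spelling it out carefully at each level of the induction is where the real work lies; everything else is a transcription of \cite[Section~6]{Cochran-Kim:2004-1} with $\Z$ replaced by $\Z_p$.
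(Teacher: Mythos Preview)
Your inductive outline and base case match the paper's, and you have correctly isolated the crux: one must show that the \emph{single} tuple selected by the $\Q$-argument also gives linearly independent vectors over $\Z_p G_n$ for every prime $p$. But your proposed mechanism for this --- that the Fox derivative matrix, viewed in $\Z G_n$, has a ``determinant'' which augments to $\pm 1$ under $\Z G_n \to \Z$ --- fails on two counts. First, for $n\ge 2$ the group $G_n$ is non-abelian and $\Z G_n$ is genuinely noncommutative, so there is no determinant to speak of (and you would need one at every inductive level, not just at the top). Second, and more fatally, the augmentation $\epsilon\colon \Z F \to \Z$ satisfies $\epsilon(\partial_j w) = (\text{exponent sum of }x_j\text{ in }w)$, which is \emph{zero} for every $w\in [F,F]$. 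Since all the $w_i$ lie in $F^{(n)}\subset [F,F]$ for $n\ge 1$, the augmented matrix is the zero matrix, not a unit. So the analogy with $\Delta(1)=1$ does not carry over.

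The paper's actual mechanism is different and does not pass through $\Z$. At each inductive step one factors $d^{k+1}(w_i^{k+1}) = d^{k+1}(w_i^k)\cdot (r\circ\pi_{k+1})(t_i)$ with $t_i$ a specific element of $\Z F$; one checks by the same computation as in \cite{Cochran-Kim:2004-1} that $(r\circ\pi_{k+1})(t_i)\ne 0$ in $\Z_p G_{k+1}$, reducing the problem to showing that linear independence of the vectors $d^{k}(w_i^k)$ over $\Z_p G_k$ lifts to linear independence of $d^{k+1}(w_i^k)$ over $\Z_p G_{k+1}$. That lift is exactly what the Strebel property $D(\Z_p)$ of the torsion-free abelian kernel $H=\ker(G_{k+1}\to G_k)$ provides: a map of free $\Z_p G_{k+1}$-modules is injective once it becomes injective after applying $-\otimes_{\Z_p H}\Z_p$, which here recovers the level-$k$ map. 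This Strebel-class input is the ingredient your proposal is missing.
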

\begin{proof}
	In the proof of Lemma ~6.5 in \cite{Cochran-Kim:2004-1}, for each $1\le k\le n$, a tuple $\{w_1^k,\ldots, w_{2g-1}^k\}\in \cP_k$ was constructed inductively such that  $\{(d_1(w_i^k),d_2(w_i^k),\ldots, d_{2g-1}(w_i^k))\,\mid\, 1\le i\le 2g-1\}$ in $(\Q G_k)^{2g-1}$ are right linearly independent over $\Q G_k$ using the fact that $r\colon S\to G$ is an algebraic $n$-solution. 
	
	We will show that for each $k$, the vectors $\{(d_1(w_i^k),d_2(w_i^k),\ldots, d_{2g-1}(w_i^k))\,\mid\, 1\le i\le 2g-1\}$ are also right linearly independent over $\Z_pG_k$, and it will complete the proof by taking $\{w_1,\ldots,w_{2g-1}\}=\{w_1^n,\ldots,w_{2g-1}^n\}$. The proof follows the lines of the proof of \cite[Lemma 6.5]{Cochran-Kim:2004-1}, and we explain the needed modifications below. 
	
	For $k=1$, in the proof of \cite[Lemma 6.5]{Cochran-Kim:2004-1} we assume that $(r\circ \pi_1 )(x_{2g})$ is nontrivial in $G_1$ and take $\{w_1^1,\ldots, w_{2g-1}^1\}=\{[x_{2g},x_1],\ldots, [x_{2g},x_{2g-1}]\}$. Then, it was shown that the vectors $\{(d_1(w_i^1),d_2(w_i^1),\ldots, d_{2g-1}(w_i^1))\,\mid\, 1\le i\le 2g-1\}$ are linearly independent over $\Q G_1$ (in the first paragraph of \cite[p.1428]{Cochran-Kim:2004-1}). Using the same argument, it can be seen that $\{w_1^1,\ldots, w_{2g-1}^1\}$ are also linearly independent over $\Z_pG_1$.
	
	We use an induction argument. Suppose that for $k<n$ it has been shown that the vectors $\{(d_1(w_i^k),d_2(w_i^k),\ldots, d_{2g-1}(w_i^k))\,\mid\, 1\le i\le 2g-1\}$ are linearly independent over $\Z_pG_k$. In \cite{Cochran-Kim:2004-1} it was shown that we may assume $(r\circ \pi_{k+1})(w_1^k)\ne e$ in $G_{k+1}$. Then $\{w_1^{k+1},\ldots, w_{2g-1}^{k+1}\}$ was defined in \cite{Cochran-Kim:2004-1} as follows: we take $w_i^{k+1}=[w_i^k,w_i^{x_{2g}}]$ if $(r\circ \pi_{k+1})  (w_i^k)\ne e$ in $G_{k+1}$ and $w_i^{k+1}=[w_i^k, w_1^k]$, otherwise.
	
	Then it was shown in \cite{Cochran-Kim:2004-1} that in $\Z G_{k+1}$, for some $t_i\in \Z F$
	\[
	(d_1^{k+1}(w_i^{k+1}),d_2^{k+1}(w_i^{k+1}),\ldots, d_{2g-1}^{k+1}(w_i^{k+1})) = (d_1^{k+1}(w_i^k),d_2^{k+1}(w_i^k),\ldots, d_{2g-1}^{k+1}(w_i^k))\cdot (r\circ \pi_{k+1})(t_i)
	\]
	where $(r\circ \pi_{k+1})(t_i)\ne e$ in $\Z G_{k+1}$. We note that using the same argument in \cite{Cochran-Kim:2004-1} one can show that $(r\circ \pi_{k+1})(t_i)\ne e$ in $\Z_p G_{k+1}$ as well. This implies that it suffices to show that the vectors $\{(d_1^{k+1}(w_i^k),d_2^{k+1}(w_i^k),\ldots, d_{2g-1}^{k+1}(w_i^k))\,\mid\, 1\le i\le 2g-1\}$ are linearly independent over $\Z_p G_{k+1}$. 
	
	For simplicity,  for $1\le i\le 2g-1$, let $\mathbf{v_i^{k+1}}=(d_1^{k+1}(w_i^k),d_2^{k+1}(w_i^k),\ldots, d_{2g-1}^{k+1}(w_i^k))$ and $\mathbf{v_i^k}=(d_1^k(w_i^k),d_2^k(w_i^k),\ldots, d_{2g-1}^k(w_i^k))$. By the induction hypothesis, the vectors $\mathbf{v_i^k}$, $1\le i\le 2g-1$, are linearly independent over $\Z_p G_{k+1}$ (and over $\Q G_{k+1}$).
	
	Let $H=G_r^{(k)}/G_{r+1}^{(k)} = \Ker \{G_{k+1}\to G_k\}$. Then $H$ is a torsion-free abelian group, and hence lies in Strebel's class $D(\Z_p)$ \cite{Strebel:1974-1}. In the last paragraph of the poof of \cite[Lemma 6.5]{Cochran-Kim:2004-1} it was shown that linear independence of the vectors $\mathbf{v_i^k}$ over $\Z G_k$ implies linear independence of the vectors $\mathbf{v_i^{k+1}}$ over $\Z G_{k+1}$. Using the same argument and the fact that the group $H$ lies in $D(\Z_p)$, changing the coefficient group for group rings from $\Z$ to $\Z_p$ in the proof, one can show that linear independence of the vectors $\mathbf{v_i^k}$ over $\Z_p G_k$ implies linear independence of the vectors $\mathbf{v_i^{k+1}}$ over $\Z_p G_{k+1}$. This completes the proof.	
\end{proof}

Now we give a proof of Theorem~\ref{thm:eta_i}.
\begin{proof}[proof of Theorem~\ref{thm:eta_i}] Let $p$ be a prime greater than the top coefficient of $\Delta_K(t)$ and $W$  an $n$-cylinder one of whose boundary components is $M(K)$. Let $\cP=(R_0, \ldots, R_n)$ where $R_i=\Q$ for $i\le n-1$ and $R_n=\Z_p$. With this $\cP$, recall that for a group $G$ and $k\le n$, $\cP^kG=G^{(k)}_r$, the $k$-th rational derived group of $G$. Recall that for a group $G$, we denote $G/G^{(k)}_r$ by $G_k$.
	
	For convenience, we also denote by $\Sigma$ the capped-off Seifert surface for $K$. Supose $\Sigma$ has genus $g$. Let $M:=M(K)$, $S:=\pi_1M^{(1)}$, and $G:=\pi_1W^{(1)}$. Let $F$ be a free group of rank $2g$ and $F\to \pi_1(M\setminus\Sigma)$ a homomorphism inducing an isomorphism on $H_1(F;R)$ for $R=\Q$ and $\Z_p$. Let $i\colon F\to \pi_1(M\setminus\Sigma)\to S$ be a map induced from this map. Since $G_{n-1}$ is PTFA \cite[Proposition 2.1]{Harvey:2006-1},  $\Z_pG_{n-1}$ embeds into the (skew) quotient field, say $\KK$, by Lemma~\ref{lem:quotient field}. 
	
	Let $M_\infty$ and $W_\infty$ denote the infinite cyclic covers of $M$ and $W$, respectively. Let $\Gamma:=\pi_1W/\cP^n\pi_1W=\pi_1W/\pi_1W^{(n)}_r$. Then $\Gamma$ is a PTFA group such that $\Gamma^{(n)}=\{e\}$. Since  $H_1(\Gamma)\cong \Z=\langle t\rangle$ and 
	\[
	[\Gamma,\Gamma]=\pi_1(W)^{(1)}/\pi_1W^{(n)}_r=G/G^{(n-1)}_r=G_{n-1},
	\] 
	we have $\Gamma\cong G_{n-1}\rtimes \langle t\rangle$. Therefore, we have 
	\begin{align*}
H_1(W_\infty;\KK) & \cong H_1(W_\infty;\Z_pG_{n-1})\otimesover{\Z_pG_{n-1}}\KK \\
				& \cong H_1(W;\Z_p[G_{n-1}\rtimes \langle t\rangle])\otimesover{\Z_pG_{n-1}}\KK \\
				& \cong H_1(W;\KK[t^{\pm 1}]).
	\end{align*}
	Similarly, we have $H_1(M_\infty;\KK)\cong H_1(M;\KK[t^{\pm 1}])$.
	
	Now by Proposition~\ref{prop:n-cylinder and n-solution}, the inclusion-induced homomorphism $j\colon S\to G$ is an $R$-algebraic $n$-solution for $i\colon F\to S$ for $R=\Q$ and $\Z_p$, and hence an $R$-algebraic $(n-1)$-solution for both $R=\Q$ and $\Z_p$ (see Remark~\ref{rem:algebraic n-solution}~(1)). Let $\cP_{n-1}$ be the finite collection of tuples of $2g-1$ (if $n>1$) or $2g$ (if $n=1$) elements of $F^{(n-1)}$ obtained using Theorem~\ref{thm:special tuple} with the map $i\colon F\to S$ above. The image of the elements of the tuples in $\cP_{n-1}$ under the homomorphism $i\colon F\to S$ is a finite subset of $S^{(n-1)}=\pi_1M^{(n)}$, which we denote by $\eta_1, \eta_2, \ldots \eta_m$. Since the map $i$ factors through $\pi_1(M\setminus\Sigma)$,  we can find the representatives $\eta_i$ in $S^3 \setminus\Sigma$, and by crossing change we can make $\eta_i$ form an unlink. Now since $j\colon S\to G$ is an $R$-algebraic $(n-1)$-solution for $i\colon F\to S$, by Theorem~\ref{thm:special tuple} there exists a $(2g-1)$-tuple $\{w_1,w_2,\ldots, w_{2g-1}\} \subset F^{(n-1)}$ (or a $2g$-tuple $\{w_1,\ldots, w_{2g}\}\subset F$ if $n=1$) which maps to a generating set of $H_1(S;\KK)$. Therefore, since $H_1(S;\KK)\cong H_1(M_\infty;\KK)\cong H_1(M;\KK[t^{\pm 1}])$, the $\eta_i$ can be regarded as a generating set of $H_1(M;\KK[t^{\pm 1}])$ (as a $\KK$-module).  
	
	 Let $d:=\deg \Delta_K(t)=\rank_\Q H_1(M_\infty;\Q)$. Since $p$ is greater than the top coefficient of $\Delta_K(t)$, we have $d=\rank_{\Z_p}H_1(M_\infty;\Z_p)$. Note that $d\ge 4 $ if $n\ge 2$ and $d\ge 2$ if $n=1$ by assumption. Since $\Gamma$ is a PTFA group such that $\Gamma^{(n)}=\{e\}$, by Theorem~\ref{thm:nontrivial} this implies that the map $H_1(M;\KK[t^{\pm 1}])\to H_1(W;\KK[t^{\pm 1}])$ is nontrivial. Therefore, since the $\eta_i$ generate $H_1(M;\KK[t^{\pm 1}])$ as a $\KK$-module, there exists some $\eta_i$ which maps to a nontrivial element in $H_1(W;\KK[t^{\pm 1}])$.
	 
	 Note that $H_1(W;\KK[t^{\pm 1}])\cong H_1(W;\Z_p\Gamma)\otimesover{\Z_pG_{n-1}}\KK$. By the definition of $\cP^{n+1}\pi_1W$, the group $\cP^n\pi_1W/\cP^{n+1}\pi_1W$ injects to $H_1(\pi_1W;\Z_p\Gamma)\cong H_1(W;\Z_p\Gamma)$. Since $\eta_i\in \pi_1M^{(n)}$, it maps into $\pi_1W^{(n)}$, hence into $\cP^n\pi_1W$. From these observations, one can deduce that the $\eta_i$, which maps to a nontrivial element in $H_1(W;\KK[t^{\pm 1}])$, also maps nontrivially to $\cP^n\pi_1W/\cP^{n+1}\pi_1W$. In particular, we see that $\eta_i\notin \cP^{n+1}\pi_1W$.
	 
	 Finally, by Lemma~\ref{lem:axes bounding grope of height n} we can homotope all $\eta_i$ such that all of $\eta_i$ bound capped gropes of height $n$ which are disjointly embedded in $S^3\setminus K$ as desired.	 	
\end{proof}

\providecommand{\bysame}{\leavevmode\hbox to3em{\hrulefill}\thinspace}
\providecommand{\MR}{\relax\ifhmode\unskip\space\fi MR }
\providecommand{\MRhref}[2]{%
  \href{http://www.ams.org/mathscinet-getitem?mr=#1}{#2}
}
\providecommand{\href}[2]{#2}

\end{document}